\theoremstyle{plain}
  \newtheorem{thm}{Theorem}[section]
  \newtheorem{cor}[thm]{Corollary}
  \newtheorem{lem}[thm]{Lemma}
  \newtheorem{prop}[thm]{Proposition}
\theoremstyle{definition}
  \newtheorem{defn}[thm]{Definition}
\theoremstyle{remark}
  \newtheorem{rem}[thm]{Remark}
  \newtheorem{eg}[thm]{Example}
\numberwithin{equation}{section}
 \newcommand{\ZZ}{\mathbb{Z}}
 \newcommand{\QQ}{\mathbb{Q}}
 \newcommand{\ZP}{\mathbb{Z}_{(\emph{p})}}
 \newcommand{\sgn}{\mathrm{sgn}}
 \DeclareMathOperator{\Tab}{\mathcal{T}}
 \DeclareMathOperator{\RSTab}{\mathrm{RStd}}                          % the set of row standard tableaux
 \DeclareMathOperator{\SSTab}{\mathrm{SStd}}                          % the set of colour-semistandard standard tableaux
 \DeclareMathOperator{\STab}{\mathrm{Std}}                          % the set of standard tableaux
 \DeclareMathOperator{\wt}{wt}
 \newcommand{\sym}[1]{\mathfrak{S}_{#1}}
 \newcommand{\s}{\mathfrak{s}}
 \newcommand{\rr}{\mathfrak{r}}
 \renewcommand{\t}{\mathfrak{t}}
 \newcommand{\uu}{\mathfrak{u}}
 \newcommand{\vv}{\mathfrak{v}}
\newcommand{\IT}{\t}
 \newcommand{\dd}{\mathsf{d}}
\newcommand{\Shape}{\mathrm{Shape}}
\newcommand{\res}{\mathrm{res}}
\newcommand{\bs}{\mathsf{s}}
\newcommand{\BW}{\mathbf{W}}
\newcommand{\bw}{\mathbf{w}}
\newcommand{\lcm}{\mathrm{lcm}\,}
\newcommand{\upt}[2]{{#1} {\uparrow^{#2}}}
\newcommand{\upI}[2]{\IT^{#1} {\uparrow^{#2}}}
\newcommand{\upa}[2]{\IT^{#1|#2}}
\newcommand{\kl}{\lambda}
\newcommand{\klm}{\lambda+(m)}
\newcommand{\Ew}[1]{E_{#1}}
\newcommand{\FM}[1]{\textcolor{red}{#1}}
\newcommand{\KM}[1]{\textcolor{violet}{#1}}
\newcommand{\KJ}[1]{\textcolor{blue}{#1}}
\newcommand{\DD}[1]{D(#1)}
\newcommand{\ba}{b}
\newcommand{\add}[2]{{#1} \oslash {#2}}
\renewcommand{\lhd}{\vartriangleleft}
\renewcommand{\rhd}{\vartriangleright}
\renewcommand{\unrhd}{\trianglerighteq}
\setlist{left=0pt, itemsep=6pt}
\setlist[enumerate,1]{left=0pt, itemsep=6pt, label= {\normalfont {(\arabic*)}}}
\setlist[enumerate,2]{left=0pt, itemsep=6pt, label= {\normalfont {(\alph*)}}}
\begin{document}

\title[Young's seminormal basis vectors and their denominators]{Young's seminormal basis vectors \\and their denominators}
\author{Ming Fang}
\address[M. Fang]{HLM, HCMS, Academy of Mathematics and Systems Science, Chinese Academy of Sciences, Beijing, 100190 -and- School of Mathematical Sciences, University of Chinese Academy of Sciences, Beijing, 100049, People's Republic of China.}
\email{fming@amss.ac.cn}

\author{Kay Jin Lim}
\address[K. J. Lim]{Division of Mathematical Sciences, Nanyang Technological University, SPMS-04-01, 21 Nanyang Link, Singapore 637371.}
\email{limkj@ntu.edu.sg}

\author{Kai Meng Tan}
\address[K. M. Tan]{Department of Mathematics, National University of Singapore, Block S17, 10 Lower Kent Ridge Road, Singapore 119076.}
\email{tankm@nus.edu.sg}

\subjclass[2010]{20C30}
\thanks{The first author is supported by Natural Science Foundation of China (No.\ 11471315, 11321101 and 11688101), the second author is supported by Singapore MOE AcRF RG17/20 and the third author is supported by Singapore MOE AcRF R-146-000-317-114.}
\keywords{Young's seminormal basis, symmetric groups}

\maketitle

\begin{abstract}
We study Young's seminormal basis vectors of the dual Specht modules of the symmetric group, indexed by a certain class of standard tableaux, and their denominators.  These vectors include those whose denominators control the splitting of the canonical morphism $\Delta(\lambda+\mu) \to \Delta(\lambda) \otimes \Delta(\mu)$ over $\ZP$, where $\Delta(\nu)$ is the Weyl module of the classical Schur algebra labelled by $\nu$.
\end{abstract}

\section{Introduction} \label{sec:introduction}
Let $n$ be a positive integer.  It is well known that the dual Specht modules $S^{\QQ}_{\lambda}$, as $\lambda$ runs over all partitions of $n$, give a complete set of irreducible modules of $\QQ\sym{n}$.  There are two distinguished bases for each $S^{\QQ}_{\lambda}$, namely the standard basis and Young's seminormal basis, both indexed by the set of standard $\lambda$-tableaux.  These two bases play a significant role in the study of
%The former is purely combinatorially constructed, while the more sophisticated and complexly defined latter is devised specially to study
the representation theory of symmetric groups; see for example \cite{Mathasbook99} and the references therein.

While the transition matrix between these two bases is unitriangular, its entries in general are rational numbers and not integers.
Although the off-diagonal entries of the transition matrix can be computed recursively, we are not aware of any work that has been done to determine a closed formula for any of these entries.
For a standard $\lambda$-tableau $\t$, the denominator of Young's seminormal basis vector $f_{\t}$, denoted $\dd_{\t}$, is the least positive integer $k$ such that $kf_{\t}$ lies in the $\ZZ$-span of the standard basis.
This is of course the least (positive) common multiple of the denominators appearing in the row labelled by $\t$ of the transition matrix from the standard basis to Young's seminormal basis.

Young's seminormal basis controls the modular representation theory of symmetric groups in many ways; see for example \cite{FLT, Mathasbook99, RHansen10, RHansen13}.  Natural questions related to the arithmetical properties of Young's seminormal basis vectors (such as their denominators) arise. As far as we know, such knowledge is scant in the available literature, but is expected to connect with the other parts of the modular representation theory \cite{FLT,RHansen10}.

Indeed, the main motivation of the work presented here is \cite{FLT}, in which the authors initiated a study into comparing the Jantzen filtrations of Weyl modules for a semisimple algebraic group $G$ over an algebraically closed field $k$ of characteristic $p>0$.
They showed that when the canonical $G$-morphism $\iota_{\lambda,\mu} : \Delta(\lambda+ \mu) \to \Delta(\lambda) \otimes \Delta(\mu)$ splits over $\ZP$, the localised ring of $\ZZ$ at the prime ideal $(p)$, then the Jantzen filtration of $\Delta(\lambda)$ may be naturally `embedded' into that of $\Delta(\lambda+\mu)$ (see \cite[Theorem 3.1]{FLT}).
This led to a detailed study of the split condition of $\iota_{\lambda,\mu}$ when $G$ is of type $A$, which was shown to be equivalent to a condition in terms of $\theta_{\lambda,\mu}$, the greatest common divisor of the coefficients of the product of certain Young symmetrizers associated to $\lambda$ and $\mu$, as well as a condition in terms of the denominator $\dd_{\add{\IT^{\lambda}}{\IT^{\mu}}}$ of $f_{\add{\IT^{\lambda}}{\IT^{\mu}}}$ when the last column of the Young diagram $[\lambda]$ is no shorter than the first column of the Young diagram $[\mu]$
(see \cite[Section 2.3]{FLT} for the definition of $\add{\s}{\t}$ for general tableaux $\s$ and $\t$).
By \cite[Theorem 3.13]{FLT}, the determination of $\theta_{\lambda,\mu}$ is equivalent to that of $\dd_{\add{\IT^{\lambda}}{\IT^{\mu}}}$.
The examples in \cite[Section 4]{FLT} show that $\theta_{\lambda,\mu}$ is very difficult to compute in general.

In this paper, we develop some techniques to study the $f_{\add{\IT^{\lambda}}{\IT^{\mu}}}$'s mentioned above and compute its denominator $\dd_{\add{\IT^{\lambda}}{\IT^{\mu}}}$ instead.
In fact, we study $f_{\upI{\lambda}{\nu}}$ for partitions $\lambda$ and $\nu$ such that the Young diagram $[\nu]$ contains the Young diagram $[\lambda]$.
Here, $\upI{\lambda}{\nu}$ is the largest standard $\nu$-tableau that contains the initial $\lambda$-tableau $\IT^{\lambda}$ as a subtableau, and $\add{\IT^{\lambda}}{\IT^{\mu}} = \upI{\lambda}{\lambda+\mu}$ when the last column of $[\lambda]$ is no shorter than the first column of $[\mu]$.
This class of Young's seminormal basis elements has the following very nice property: each is spanned by standard basis elements labelled by tableaux which are colour-semistandard, and those with the same colour type have the same coefficients (see Definition \ref{defn:colour} and Theorem \ref{lem:semistandard}).

Our first main result (Theorem \ref{thm:mu=(1)}) is the closed formula for $f_{\add{\IT^{\lambda}}{\IT^{(1)}}}$ in terms of the standard basis of $S^{\QQ}_{\lambda+(1)}$.
This result may be considered as the counterpart of \cite[Theorem 1.2]{Raicu14}, which provides a simplified way of computing the product of certain Young symmetrizers, 
%whose greatest common divisor of its coefficients is $\theta_{\lambda,(1)}$, 
from which $\theta_{\lambda,(1)}$ could possibly be deduced.
%While it is not clear how one can immediately deduce $\theta_{\lambda,(1)}$ from \cite[Theorem 1.2]{Raicu14}, our result readily provides a closed formula for $\dd_{\add{\IT^{\lambda}}{\IT^{(1)}}}$.
We note in addition that Theorem \ref{thm:mu=(1)} may also be derived from \cite[Theorem 1]{RHansen10} %which provides a recursive formula of $f_{\add{\IT^{\lambda}}{\IT^{(1)}}}$
in the context of Iwahori-Hecke algebras using a different inductive approach.
%, \KM{in which an element $\mathsf{h}_{\lambda} = \sum_{w \in \sym{|\lambda|+1}} c_{\lambda,w} T_w$ of the Iwahori-Hecke algebra satisfying $f_{\add{\IT^{\lambda}}{\IT^{(1)}}} = \mathsf{h}_{\lambda} e_{\IT^{\lambda+(1)}}$ is described. It unfortunately is somewhat unclear if the corresponding expression $f_{\add{\IT^{\lambda}}{\IT^{(1)}}} = \sum_{w \in \sym{|\lambda|+1}} c_{\lambda,w} e_{w \cdot \IT^{\lambda+(1)}}$ is indeed the expression of $f_{\add{\IT^{\lambda}}{\IT^{(1)}}}$ in terms of the standard basis elements, as it did not address directly if $w \cdot \IT^{\lambda+(1)}$ is standard whenever $c_{\lambda,w} \ne 0$.}

We next study $f_{\add{\IT^{(k,\ell^s)}}{\IT^{(m)}}}$.  We provide closed formulae for $f_{\add{\IT^{(k,\ell^s)}}{\IT^{(m)}}}$, and hence $\dd_{\add{\IT^{(k,\ell^s)}}{\IT^{(m)}}}$, in the cases $s=1$ and $\ell = 1$ (Theorem \ref{thm:klm} and Corollary \ref{cor:denominator-ell=1}) respectively, which can be used to determine $\theta_{(k,\ell),(m)}$ and $\theta_{(1^n),(m)}$ that have been computed in \cite[Section 4]{FLT}.
Readers who are familiar with \cite{FLT} will appreciate the succinctness and superiority of this new approach in computing these two numbers.
While we did not succeed in providing a closed formula for $f_{\add{\IT^{(k,\ell^s)}}{\IT^{(m)}}}$ in the general case, we are able to obtain some reduction results (Theorem \ref{thm:4reductions}).

Reduction actually holds in a more general setting, and using this, we obtain various upper bounds for $\dd_{\upI{\lambda}{\nu}}$.
We give some examples which show that these upper bounds are optimal in some cases.

Our results on the denominator $\dd_{\upI{\lambda}{\nu}}$ may be summarised as follows:

\begin{thm} \label{thm:summary}
Let $\lambda = (\lambda_1,\dotsc,\lambda_r)$, with $\lambda_r > 0$, and $\nu = (\nu_1,\dotsc, \nu_t)$ be partitions such that $[\lambda] \subseteq [\nu]$.

\begin{enumerate}
\item
If $\lambda$ is obtained from $\nu$ by removing a removable node $A$, and $B_1,\dotsc, B_s$ are the removable nodes of $\nu$ below $A$, then
    $$
    \dd_{\upI{\lambda}{\nu}} = \prod_{i=1}^s (\res(A) - \res(B_{i})),
    $$
    where $\res(C)$ denotes the residue of the node $C$.

\item If $r = 2$, then
\begin{align*}
  \dd_{\upI{\lambda}{\nu}} &=  \frac{\lcm[\lambda_1-\lambda_2+1,\, \lambda_1-\lambda_2+1 + \min(\lambda_2,\, \nu_1-\lambda_1)]}{\lambda_1-\lambda_2+1}.
\end{align*}
Here $[a,b] = \{ i \mid \ZZ \mid a \leq i \leq b \}$ for $a, b \in \ZZ$, and $\lcm S$ denotes the least common multiple of the elements in $S$ for $S \subseteq \ZZ^+$.

\item If $r\geq 2$ and $\lambda_2 = \dotsb = \lambda_r = \nu_2 = \dotsb = \nu_r$, then
$$\dd_{\upI{\lambda}{\nu}} = \dd_{\upI{(k,\ell^s)}{(k+\ell,\ell^s)}},$$
where
    $k = \max(\lambda_1-\lambda_2+r,\min(\lambda_1,\nu_1-\lambda_2))$,
    $\ell =  \min(\lambda_2,\nu_1-\lambda_1)$ and
    $s = \min(r, \ell).$

\item We have
\begin{alignat*}{2}
  \dd_{\upI{\lambda}{\nu}}
  &= \dd_{\upI{\lambda}{(\nu_1,\dotsc,\nu_{r-1},\lambda_r)}}; \\
  \dd_{\upI{\lambda}{\nu}}
  &= \dd_{\upI{(\lambda_2,\dotsc, \lambda_r)}{(\nu_2,\dotsc, \nu_t)}}& \quad &\text{if $\lambda_1 = \nu_1$ and $r \geq 2$}.
\end{alignat*}

\item For all positive integers $m$ with $m \leq \nu_1 - \lambda_1$, we have
  $$\dd_{\upI{\lambda}{\nu}}
  \mid \dd_{\upI{\lambda}{\lambda+(m)}} \dd_{\upI{\lambda+(m)}{\nu}}.$$

\item For all positive integers $m$ and $i$ with $2 \leq i \leq r-1$, we have
$$  \dd_{\upI{\lambda}{\lambda+(m)}}
  \mid \dd_{\upI{\lambda^{(i)}}{\lambda^{(i)}+(m)}} \dd_{\upI{\lambda^{\leqslant i}}{\lambda^{\leqslant i}+(m)}},$$
  where $\lambda^{(i)} =   (\lambda_1+i-1,\lambda_{i+1},\dotsc,\lambda_r)$ and $\lambda^{\leqslant i} = (\lambda_1,\dotsc, \lambda_i)$.

\end{enumerate}
\end{thm}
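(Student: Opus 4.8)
The plan is to establish the six items of Theorem~\ref{thm:summary} as consequences of the more refined structural and reduction results that the paper develops for the vectors $f_{\upI{\lambda}{\nu}}$ themselves, rather than attacking the denominators directly. For item (1), $\lambda$ is obtained from $\nu$ by deleting a single removable node $A$, so $\upI{\lambda}{\nu}$ differs from $\IT^{\nu}$ only in the placement of the entry $n$; this is precisely the situation analysed in our first main result (Theorem~\ref{thm:mu=(1)}), applied with the roles of the ambient partition played by $\nu$. Reading off the standard-basis expansion of $f_{\add{\IT^{\lambda}}{\IT^{(1)}}}$ there, the denominator is the least common multiple of the denominators of the coefficients, and a short computation with residues shows this lcm collapses to the stated product $\prod_{i=1}^s (\res(A)-\res(B_i))$ — the factors being exactly the axial-distance differences between $A$ and the removable nodes below it. For item (2), when $r=2$ the tableau $\upI{\lambda}{\nu}$ has shape $(\nu_1,\lambda_2)$ with an inner $(\lambda_1,\lambda_2)$ block filled initially, and this is a two-row instance of $f_{\add{\IT^{(k,\ell^s)}}{\IT^{(m)}}}$ with $s=1$; I would invoke Theorem~\ref{thm:klm} (the $s=1$ closed formula) with $k = \lambda_1-\lambda_2$, $m = \nu_1-\lambda_1$, $\ell = \lambda_2$, and extract the denominator, checking that it simplifies to $\lcm[\lambda_1-\lambda_2+1,\ \lambda_1-\lambda_2+1+\min(\lambda_2,\nu_1-\lambda_1)]/(\lambda_1-\lambda_2+1)$; the truncation $\min(\lambda_2,\nu_1-\lambda_1)$ appears because only the first $\min(\lambda_2,\nu_1-\lambda_1)$ of the added nodes in row $1$ can ever sit above a node of row $2$.

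Items (3)–(6) are the reduction statements and should follow from Theorem~\ref{thm:4reductions} together with the more general reduction results the paper announces just before Theorem~\ref{thm:summary}. For item (4), the first equality says the columns of $\nu$ strictly to the right of column $\lambda_r$ and below row $r$ contribute nothing: the entries of $\upI{\lambda}{\nu}$ in those extra cells of the last row are forced, and the Jucys–Murphy action there introduces no new denominators, so $f_{\upI{\lambda}{\nu}}$ and $f_{\upI{\lambda}{(\nu_1,\dots,\nu_{r-1},\lambda_r)}}$ have the same denominator; the second equality of (4), when $\lambda_1=\nu_1$, is a "peel off the top row" statement — the first row is completely filled in both $\IT^{\lambda}$ and $\upI{\lambda}{\nu}$, so the relevant Jucys–Murphy eigenvalues factor through the action on the tableau with the first row removed. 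Item (3) is then a normalisation: after the peeling and truncation moves of (4) one may assume $\lambda$ and $\nu$ agree in all rows below the first, and one reduces to the rectangle-plus-one-row shape $(k,\ell^s)\to(k+\ell,\ell^s)$ with the stated parameters, where $k$ is chosen so that the axial distances that actually occur are unchanged and $s=\min(r,\ell)$ caps the number of rows that interact. For item (5), the canonical factorisation $\Delta(\lambda+(m)) \to \Delta(\lambda)\otimes\Delta((m))$ and the associativity of these canonical morphisms give a corresponding factorisation of $f_{\upI{\lambda}{\nu}}$ through $f_{\upI{\lambda}{\lambda+(m)}}$ and $f_{\upI{\lambda+(m)}{\nu}}$ (building $\nu$ from $\lambda$ by first adding $m$ boxes in row $1$, then the rest); clearing denominators along this composite gives the divisibility $\dd_{\upI{\lambda}{\nu}} \mid \dd_{\upI{\lambda}{\lambda+(m)}}\,\dd_{\upI{\lambda+(m)}{\nu}}$. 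Item (6) is the analogous "split row $1$ against the rows $i+1,\dots,r$ versus rows $2,\dots,i$" factorisation, which is exactly one of the four reductions isolated in Theorem~\ref{thm:4reductions}.

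The main obstacle is item (3): unlike (4), (5) and (6), which are clean factorisation or peeling arguments, (3) requires showing that the denominator depends only on a bounded amount of data — the three integers $k,\ell,s$ with their specific $\max/\min$ formulas — and that the apparently larger problem genuinely collapses to the rectangle case. The delicate point is the definition of $k = \max(\lambda_1-\lambda_2+r,\ \min(\lambda_1,\nu_1-\lambda_2))$: one has to verify that the set of axial distances $\res(A)-\res(B)$ occurring among interacting nodes in the general shape coincides with that for $(k,\ell^s)\to(k+\ell,\ell^s)$, and in particular that enlarging the first row beyond $\nu_1-\lambda_2$ or beyond $\lambda_1-\lambda_2+r$ can only add factors that are already present (hence do not change the lcm). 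I expect this to need a careful case analysis comparing $\lambda_1$, $\nu_1-\lambda_2$ and $\lambda_1-\lambda_2+r$, combined with the observation from item (1)-type computations that the relevant denominators are built from consecutive-integer lcm's whose value stabilises once the range is long enough. Once (3) is in place, combining (3), (4) and (1)–(2) yields all the explicit denominators quoted elsewhere in the paper, and (5)–(6) provide the upper bounds in the remaining cases.
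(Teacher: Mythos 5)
Your overall strategy --- assembling the six parts from the explicit formulae of Sections \ref{sec:mu=(1)} and \ref{sec:kl^sm} and the reduction results of Section \ref{sec:general} --- is the same as the paper's, and parts (4), (5) and (6) are handled correctly in spirit: they are exactly the content of Theorem \ref{thm:general}, i.e.\ the factorisations $f_{\upI{\lambda}{\nu}} = \DD{\upa{\lambda}{m}}\DD{\upI{\lambda+(m)}{\nu}}\,e_{\IT^{\nu}}$ and their analogues, followed by clearing denominators (part (6) comes from the general Corollary \ref{cor:same-relative-positions}(3), not from Theorem \ref{thm:4reductions}, which is specific to shapes $(k,\ell^s)$). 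Two smaller imprecisions: in part (1) the tableau $\upI{\lambda}{\nu}$ does \emph{not} in general differ from $\IT^{\nu}$ only in the position of one entry --- that is so only when $A$ lies in the first row; when $A$ lies in row $i>1$ you must first apply the row-removal identity of part (4) (Proposition \ref{prop:general-basis-reduction}(2), iterated as in Corollary \ref{cor:remove-one-node}) before Theorem \ref{thm:mu=(1)} and Corollary \ref{cor:mu=(1)} apply; the differences $\res(A)-\res(B_i)$ are unchanged by the resulting shift of residues, so the formula survives. Similarly in part (2) the shape of $\upI{\lambda}{\nu}$ is $\nu$, not $(\nu_1,\lambda_2)$; you need the truncation $\dd_{\upI{\lambda}{\nu}}=\dd_{\upI{\lambda}{(\nu_1,\lambda_2)}}$ from part (4) before Theorem \ref{thm:klm} (with $k=\lambda_1$, not $k=\lambda_1-\lambda_2$) can be quoted.

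The genuine gap is part (3), which you flag as ``the main obstacle'' and then leave open, anticipating a case analysis of axial distances. No such analysis is needed, and the route you sketch (comparing sets of residue differences and appealing to stabilisation of consecutive-integer lcm's) is not how the reduction goes; it is doubtful it could be made to work, since for $s\geq 2$ the coefficients $\ba^{k,\ell}_{m,\bw}$ are not simply reciprocals of products of axial distances. The actual argument is short: first apply part (4) to replace $\nu$ by $(\nu_1,\lambda_2^{r-1})$, so that $\dd_{\upI{\lambda}{\nu}}=\dd_{\upa{(\lambda_1,\lambda_2^{r-1})}{\nu_1-\lambda_1}}$, a denominator of the rectangle-plus-row type of Section \ref{sec:kl^sm}; then invoke Corollary \ref{cor:reduction-denominator-k-ell^s-m}, which normalises the parameters by chaining the four coefficient identities of Theorem \ref{thm:4reductions}: the $\sym{s}$-symmetry $|\ba^{k,\ell}_{m,\bw}|=|\ba^{k,\ell}_{m,\sigma\cdot\bw}|$ to restrict to weakly increasing weights, the identity $\ba^{k,\ell}_{m,(0,\bw)}=\ba^{k+1,\ell}_{m,\bw}$ to shrink $s$, and the identities $\ba^{k,\ell}_{m,\bw}=\ba^{k,\ell}_{m-1,\bw}$ (for $m>\ell$) and $\ba^{k,\ell}_{m,\bw}=\ba^{k-1,\ell-1}_{m,\bw}$ (for $\ell>m$) to force $m=\ell$. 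The $\max$/$\min$ expressions for $k$, $\ell$, $s$ in the statement are merely the bookkeeping of this normalisation, not the output of a residue computation. Without this chain your proof of (3) does not exist.
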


Parts (1) and (2) of Theorem \ref{thm:summary} provide closed formulae for $\dd_{\upI{\lambda}{\nu}}$ for specific $\nu$ and $\lambda$ respectively, while parts (3)--(6) relate $\dd_{\upI{\lambda}{\nu}}$ to `smaller' denominators, and hence may be used to provide upper bounds for general $\dd_{\upI{\lambda}{\nu}}$ not covered in parts (1) and (2).

We now indicate the organisation of this paper.  After providing the necessary background in the next section, we look at the seminormal basis vector $f_{\add{\IT^{\lambda}}{\IT^{(1)}}}$ in Section \ref{sec:mu=(1)}, %culminating in a closed formula for $f_{\upa{\lambda}{1}}$ in terms of the standard basis, and a closed formula for its denominator.
and $f_{\add{\IT^{(k,\ell^s)}}{\IT^{(m)}}}$ in Section \ref{sec:kl^sm}.
In our concluding Section \ref{sec:general}, we relate $f_{\upI{\lambda}{\nu}}$ and its denominator $\dd_{\upI{\lambda}{\nu}}$ to $f_{\upI{\tilde{\lambda}}{\tilde{\nu}}}$ and $\dd_{\upI{\tilde{\lambda}}{\tilde{\nu}}}$ for some smaller $\tilde{\lambda}$ and $\tilde{\nu}$.

\begin{rem}\hfill
\begin{enumerate}
\item As our motivation from \cite{FLT} is to study $\dd_{\add{\IT^{\lambda}}{\IT^{\mu}}}$ for the symmetric groups, we choose to present our work in this context here.  We believe that most, if not all, of our results should generalise to Iwahori-Hecke algebras without much difficulty.
\item
Our approach to Young's seminormal basis vectors is different from \cite{RHansen10}.
%\begin{itemize}
%\item Theorem \ref{thm:mu=(1)} provides a closed formula for $f_{\add{\IT^{\lambda}}{\IT^{(1)}}}$, while \cite[Theorem 1]{RHansen10} gives a recursive formula.
%\item For a general $\lambda$-tableau $\t$, the approach of \cite{RHansen10} is to provide a recursively defined element $L_{\t}$ in the Iwahori-Hecke algebra such that $f_{\t} = L_{\t} e_{\IT^{\lambda}}$.  In contrast, 
We study directly $\DD{\t} = \sum_{\s \in \STab(\lambda)} q_{\t,\s} d(\s)$ (see Definition \ref{defn:D}), which is a distinguished element of $\QQ \sym{|\lambda|}$ satisfying $f_{\t} = \DD{\t} e_{\IT^{\lambda}}$.  With the introduction of our key notion of colour-semistandardness (Definition \ref{defn:colour}), we are able to obtain closed formulae for $\DD{\upI{\lambda}{\nu}}$ in some cases, and relate the coefficients $q_{\upI{\lambda}{\nu},\s}$ to those coming from smaller partitions in some others.
%\end{itemize}
\end{enumerate}

\end{rem}

\section{Preliminaries} \label{sec:preliminaries}

In this section, we recall the background theory and prove some preliminary results.  For a large part, we follow the notations that have been used in \cite{FLT}.

Throughout this paper, we use the following notation, for $a,b \in \ZZ$:
$$
[a,b] := \{ k \in \ZZ \mid a \leq k \leq b \}.
$$
Also, for $S \subseteq \ZZ^+$, $\lcm S$ denotes the least common multiple of the integers in $S$.

\subsection{Symmetric groups}
Denote the group of bijections on a nonempty set $X$ by $\sym{X}$.  We view elements of such a group as functions, so that we compose these elements from right to left. When $Y$ is a nonempty subset of $X$, we view $\sym{Y}$ as a subgroup of $\sym{X}$ by identifying an element of $\sym{Y}$ with its extension that sends $x$ to $x$ for all $x \in X \setminus Y$.

Let $X \subseteq \ZZ$ and $k \in \ZZ$.  Define $X^{+k} := \{ x+k \mid x \in X \}$, and for any function $\sigma : X \to X$, write $\sigma^{+k} : X^{+k} \to X^{+k}$ for the function such that $\sigma^{+k}(x+k) = \sigma(x) + k$ for all $x \in X$.  Then $\sigma \mapsto \sigma^{+k}$ is a group isomorphism from $\sym{X}$ to $\sym{X^{+k}}$, and this extends further to give an isomorphism $\QQ\sym{X} \to \QQ\sym{X^{+k}}$.  If $R \subseteq \QQ\sym{X}$, we write $R^{+k}$ for $\{ r^{+k} \mid r\in R \}$.  In particular, $\sym{X}^{+k} = \sym{X^{+k}}$.

Let $n \in \ZZ^+$, the set of all positive integers.  We write $\sym{n}$ for $\sym{[1,n]}$.  It is well known that $\sym{n}$ is a Coxeter group with the basic transpositions $\bs_i := (i,i+1)$, one for each $i \in [1,n-1]$, as its Coxeter generators.

\subsection{Compositions, partitions and Young tableaux}
\label{subsec:combinatorics-Young-tableaux}
A composition $\lambda = (\lambda_1,\lambda_2,\dotsc)$ is a sequence of non-negative integers which are eventually zero.  We write $|\lambda|$ for $\sum_{i=1}^{\infty} \lambda_i$.  If $|\lambda| = n$, we say that $\lambda$ is a composition of $n$, and write $\lambda \vDash n$.  The Young subgroup $\sym{\lambda}$ is
$$
\sym{\lambda} = \sym{\lambda_1} \sym{\lambda_2}^{+\lambda_1} \sym{\lambda_3}^{+(\lambda_1+\lambda_2)} \dotsm \subseteq \sym{n}.
$$
This is a parabolic subgroup of $\sym{n}$ as a Coxeter group.
The dominance order $\unrhd$ on all compositions is given by: $\lambda \unrhd \mu $ if and only if $\lambda_1+ \cdots+\lambda_k\geq \mu_1+\cdots+\mu_k$ for all $k\in \ZZ^+$.

Let $\lambda \vDash n$.  The Young diagram of $\lambda$ is defined to be the set $ [\lambda] = \{(a,b) \in (\mathbb{Z}^+)^2 \mid b\leq \lambda_a\};$ and we call its elements the {\em nodes} of $\lambda$. Following \cite[3.30]{Mathasbook99}, for a node $(a,b) \in [\lambda]$, its {\em residue} $\res((a,b))$ is defined as $\res((a,b)) = b-a$.  We depict $[\lambda]$ as an array of left-justified boxes in which the $i$-th row comprises exactly $\lambda_i$ boxes, with each box representing a node of $\lambda$.

A $\lambda$-tableau is a bijective map $\s: [\lambda]\to [1, n]$, in which case $\lambda$ is said to be the shape of $\s$, denoted by $\Shape(\s)$.  We identify $\s$ with the pictorial depiction of the Young diagram $[\lambda]$ in which each box in $[\lambda]$ is filled with $[1, n]$ so that each integer appears exactly once. When $\s(i,j) = k$, the residue of $k$ in $\s$, denoted $\res_{\s}(k)$, is $\res((i,j))$.  Denote the set of all $\lambda$-tableaux by $\Tab(\lambda)$.

A $\lambda$-tableau $\s$ is said to be \emph{row standard} (respectively, {\em column standard}) if its entries are increasing along each row (respectively, down each column). If $\t \in \Tab(\lambda)$, we write $\overline{\t}$ for the row standard $\lambda$-tableau obtained by rearranging the entries in each row of $\t$.  Let $\RSTab(\lambda)$ be the set of all row standard $\lambda$-tableaux.
A $\lambda$-tableau is {\em standard} if it is both row and column standard, and we denote the set of all standard $\lambda$-tableaux by $\STab(\lambda)$.

Let $\s \in \RSTab(\lambda)$ and $r \in [1, n]$.  Since $\s$ is row standard, $\s^{-1}([1,r])$ is the Young diagram of a composition, and we define the subtableau $\s{\downarrow_r}$ of $\s$ to be the restriction of $\s$ to this subdomain.  Pictorially, $\s{\downarrow_r}$ consists precisely of those boxes in $[\lambda]$ which are filled with $1,\dotsc,r$ by $\s$. The dominance order $\unrhd$ on $\RSTab(\lambda)$ is given by $\s\unrhd \t$ if and only if, for each $r \in [1, n]$, we have \[\Shape(\s{\downarrow_r})\unrhd  \Shape(\t{\downarrow_r}).\]
We write $\s\rhd\t$ and $\t \lhd \s$ if $\s\unrhd\t$ and $\s\neq\t$.

Now suppose further that $\lambda_1 \geq \lambda_2 \geq \dotsb$.  In this case, we call $\lambda$ a partition of $n$, denoted $\lambda \vdash n$.  In this paper, all partitions are nonempty (but we allow the composition $(0,0,\dotsc)$), and we write $\lambda=(\lambda_1,\dotsc,\lambda_r)$ where $r = \max\{i \in \mathbb{Z}^+\mid \lambda_i >0 \}$.
A node $(a,b) \in [\lambda]$ is {\em removable} if $(a+1,b), (a,b+1) \notin [\lambda]$.

\begin{defn} \label{defn:uparrow}
Let $\lambda \vdash n $.
\begin{enumerate}
\item Let $\nu \vdash m$ such that $[\lambda] \subseteq [\nu]$.  For $\s \in \STab(\lambda)$, we define $\upt{\s}{\nu}$ to be the standard $\nu$-tableau where $(\upt{\s}{\nu}){\downarrow_{n}} = \s$ and the nodes of $\upt{\s}{\nu}$ lying in the skew Young diagram $[\nu] \setminus [\lambda]$ are filled with $[n+1,m]$ in turn, starting with the top row, going from left to right in each row, and down the rows.

\item
The {\em initial $\lambda$-tableau}, denoted $\IT^{\lambda}$, is $\upt{\t_0}{\lambda}$, where $\t_0$ is the unique (standard) $(1)$-tableau.

\item To ease the notation, for $m \in \ZZ^+$, we write $\upa{\lambda}{m}$ for $\upI{\lambda}{\lambda+(m)}$. Here, and hereafter, $\lambda + (m) = (\lambda_1+m,\lambda_2, \dotsc, \lambda_r)$ when $\lambda = (\lambda_1,\dotsc, \lambda_r)$.
\end{enumerate}
\end{defn}

\begin{rem}
  Note that  $\upt{\s}{\nu}$ is the largest (with respect to $\unrhd$) row standard $\nu$-tableau $\t$ such that $\t {\downarrow_n} = \s$, and that $\IT^{\lambda}$ is the largest row standard $\lambda$-tableau.
\end{rem}

We illustrate Definition \ref{defn:uparrow} with the following example:
$$
\upI{(2,2)}{(4,3,2)} =
\raisebox{3mm}{
\ytableausetup{boxsize=1em, mathmode}
\begin{ytableau}
\scriptstyle 1 & \scriptstyle 2  & \scriptstyle \FM{5} &\scriptstyle \FM{6} \\
\scriptstyle 3 & \scriptstyle 4  & \scriptstyle \FM{7} \\
\scriptstyle \FM{8} & \scriptstyle \FM{9}
\end{ytableau}}
$$

\begin{lem} \label{lem:t^lambda-uparrow^nu}
Let $\lambda \vdash n$ and $\nu \vdash m$ with $[\lambda] \subseteq [\nu]$.  If $\s \in \STab(\nu)$ such that $\s \rhd \upI{\lambda}{\nu}$, then $\Shape(\s{\downarrow_n}) \rhd \lambda$.
\end{lem}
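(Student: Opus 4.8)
The plan is to argue by contradiction, using the characterisation of $\upI{\lambda}{\nu}$ from the preceding Remark as the largest row standard $\nu$-tableau $\t$ with $\t{\downarrow_n} = \s$. Suppose $\s \in \STab(\nu)$ satisfies $\s \rhd \upI{\lambda}{\nu}$ but $\Shape(\s{\downarrow_n}) \not\rhd \lambda$. Since $\s \rhd \upI{\lambda}{\nu}$, by definition of the dominance order on row standard tableaux we have $\Shape(\s{\downarrow_r}) \unrhd \Shape((\upI{\lambda}{\nu}){\downarrow_r})$ for every $r \in [1,m]$; specialising to $r = n$ and recalling that $(\upI{\lambda}{\nu}){\downarrow_n} = \IT^\lambda$ has shape $\lambda$, we get $\Shape(\s{\downarrow_n}) \unrhd \lambda$. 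Combined with the assumption $\Shape(\s{\downarrow_n}) \not\rhd \lambda$, this forces $\Shape(\s{\downarrow_n}) = \lambda$, i.e.\ $\s{\downarrow_n} \in \RSTab(\lambda)$.

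The next step is to compare $\s{\downarrow_n}$ with $\IT^\lambda$. Since $\IT^\lambda$ is the largest row standard $\lambda$-tableau (again by the Remark), we have $\s{\downarrow_n} \unlhd \IT^\lambda$. I would then show $\s \unlhd \upI{\lambda}{\nu}$, contradicting $\s \rhd \upI{\lambda}{\nu}$. To see this, note that $\upI{\lambda}{\nu}$ is by construction the largest row standard $\nu$-tableau restricting to $\IT^\lambda$ on $[1,n]$; more to the point, I would verify directly from the dominance order that if $\uu, \vv \in \RSTab(\nu)$ with $\uu{\downarrow_n} \unlhd \vv{\downarrow_n}$ and $\vv{\downarrow_n}$, $\uu{\downarrow_n}$ both of shape $\lambda$, and if $\vv$ is obtained from $\vv{\downarrow_n}$ by the ``greedy'' filling of $[\nu]\setminus[\lambda]$ used to define $\upI{\lambda}{\nu}$, then $\uu \unlhd \vv$. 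Concretely: for $r \leq n$ we have $\Shape(\uu{\downarrow_r}) \unlhd \Shape(\vv{\downarrow_r})$ by hypothesis (both restrict within the common shape $\lambda$, with $\uu{\downarrow_n} \unlhd \vv{\downarrow_n}$); and for $r > n$, since $\vv{\downarrow_n}$ and $\uu{\downarrow_n}$ have the same shape $\lambda$, the nodes of $[\nu]\setminus[\lambda]$ are the same skew shape in both, and $\Shape(\vv{\downarrow_r}) = [\lambda] \cup (\text{first } r-n \text{ nodes of } [\nu]\setminus[\lambda] \text{ in the greedy order})$ dominates any $\Shape(\uu{\downarrow_r})$ with $[\lambda] \subseteq \Shape(\uu{\downarrow_r}) \subseteq [\nu]$, because the greedy order fills the skew shape as high up as possible. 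Applying this with $\uu = \s$, $\vv = \upI{\lambda}{\nu}$ yields $\s \unlhd \upI{\lambda}{\nu}$, the desired contradiction.

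The main obstacle I anticipate is the last comparison step for $r > n$: one must check carefully that the specific top-to-bottom, left-to-right filling of the skew diagram $[\nu]\setminus[\lambda]$ prescribed in Definition \ref{defn:uparrow}(1) really does produce the dominance-maximal row standard $\nu$-tableau among all those whose restriction to $[1,n]$ has shape $\lambda$ — equivalently, that among the $r-n$ nodes of $[\nu]\setminus[\lambda]$ filled by $n+1,\dots,r$, the greedy choice maximises $\Shape(\vv{\downarrow_r})$ in dominance for every $r$. This is essentially the content (and the point) of the Remark following Definition \ref{defn:uparrow}, so depending on how much of that Remark one is willing to invoke as already established, this step is either immediate or requires a short induction on $r-n$, adding one node at a time and checking that placing it in the highest available row weakly increases each partial-sum $\Shape(\vv{\downarrow_r})_1 + \dots + \Shape(\vv{\downarrow_r})_k$. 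Everything else is bookkeeping with the dominance order.
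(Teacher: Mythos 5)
Your proposal is correct and follows essentially the same route as the paper: both reduce to the case $\Shape(\s{\downarrow_n})=\lambda$ and then contradict the maximality of $\upI{\lambda}{\nu}$. The only (harmless) difference is at the end — the paper uses $\s\rhd\upI{\lambda}{\nu}$ to force $\s{\downarrow_n}\unrhd\IT^{\lambda}$, hence $\s{\downarrow_n}=\IT^{\lambda}$, and then invokes the maximality in the Remark, whereas you verify by the partial-sum computation that the greedy filling gives the dominance-largest extension, which yields $\s\unlhd\upI{\lambda}{\nu}$ directly.
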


\begin{proof}
If $\s \rhd \upI{\lambda}{\nu}$, then $\Shape(\s {\downarrow_r}) \unrhd \Shape((\upI{\lambda}{\nu}) {\downarrow_r})$ for all $r \in [1,m+n]$.
If the inequality is not strict at $r=n$, then $\Shape(\s {\downarrow_n}) = \Shape((\upI{\lambda}{\nu}) {\downarrow_n}) = \Shape(\IT^{\lambda}) = \lambda$, and so $\rr := \s{\downarrow_{n}} \in \STab(\lambda)$.
For each $r \in [1,n]$, we have
$$
\Shape(\rr {\downarrow_r}) = \Shape(\s{\downarrow}_r) \unrhd  \Shape((\upI{\lambda}{\nu}) {\downarrow_r}) = \Shape(\IT^{\lambda} {\downarrow_r}),
$$
so that $\rr \unrhd \IT^{\lambda}$, and hence $\rr = \IT^{\lambda}$ since $\IT^{\lambda}$ is the largest standard $\lambda$-tableau.  But then $\s \rhd \upI{\lambda}{\nu}$ and $\s{\downarrow_n} = \rr = \IT^{\lambda}$ contradict the maximality of $\upI{\lambda}{\nu}$.
\end{proof}

Post-composition of $\lambda$-tableaux by elements of $\sym{n}$ gives a well-defined, faithful and transitive left action of $\sym{n}$ on $\Tab(\lambda)$, i.e.\ $\sigma \cdot \s = \sigma \circ \s$ for $\sigma \in \sym{n}$ and $\s\in \Tab(\lambda)$.  For a $\lambda$-tableau $\s$, we denote the row and column stabilizers of $\s$ under this action by $R_{\s}$ and $C_{\s}$, respectively.  In addition, we write $d(\s)$
for the unique element in $\sym{n}$ such that $\s= d(\s) \cdot \IT^\lambda$, or equivalently $d(\s) =  \s \circ (\IT^{\lambda})^{-1}$.

We shall require the following elementary result about standard $\lambda$-tableaux.

\begin{lem} \label{lem:std}
Let $\lambda \vdash n$.
\begin{enumerate}
\item
If $w \in \sym{n}$ and $\t \in \STab(\lambda)$ such that $w \cdot \t \in \STab(\lambda)$, then $w$ has a reduced expression $w= \bs_{i_{\ell}} \bs_{i_{\ell -1}} \dotsm \bs_{i_1}$ such that $(\bs_{i_j}\cdots \bs_{i_2}\bs_{i_1})\cdot\t \in \STab(\lambda)$ for every $j\in [1,\ell-1]$.

\item
If $\t \in \STab(\lambda)$ and $(i+1,j) \in [\lambda]$ with $i >0$, then there exists $w \in \sym{[\t(i,j),\, \t(i+1,j)]}$ such that $w\cdot \t \in \STab(\lambda)$ and $(w\cdot \t)(i+1,j) - (w\cdot \t)(i,j) =1$.
\end{enumerate}
\end{lem}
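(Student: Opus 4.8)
The plan is to handle the two parts separately, with part (2) being essentially a concrete construction and part (1) a descent-type induction.

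For part (1), I would argue by induction on $\ell(w)$. If $w = 1$ there is nothing to prove. Otherwise, since $\t$ and $w\cdot\t$ are both standard (hence both row standard), and $\t$ is the $\unrhd$-smallest... no: I would instead use the standard fact that whenever $\t$ and $w\cdot\t$ are row standard with $w \neq 1$, there is a Coxeter generator $\bs_i$ with $\ell(w\bs_i) = \ell(w) - 1$ — wait, I need the generator on the correct side. The element acting on tableaux is $w$, and I want to peel off generators from the \emph{right} of a reduced word so that after each partial product the tableau is still standard. So I want to find $\bs_{i_1}$ such that $\bs_{i_1}\cdot\t \in \STab(\lambda)$ and $\ell(\bs_{i_1}^{-1}w)<\ell(w)$, i.e. $\ell(w\bs_{i_1}) < \ell(w)$ (using $\bs_{i_1}=\bs_{i_1}^{-1}$). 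Concretely: since $w \neq 1$ and $w\cdot\t$ is standard, there is some $i$ such that in $w\cdot\t$ the entry $i+1$ sits strictly above and weakly left of the entry $i$... more precisely, pick $i$ with $\ell(w\bs_i)<\ell(w)$, which means $w^{-1}(i) > w^{-1}(i+1)$; translating to tableaux, $i$ and $i+1$ occupy positions in $w\cdot\t$ such that swapping them (i.e. applying $\bs_i$ on the left) keeps the tableau standard — here one checks that because $w\cdot\t$ is standard and $i$ lies in a later "reading position" than $i+1$ under $\t$... The clean way: set $\t' = \bs_i \cdot (w\cdot\t)$; one checks $\t'$ is still standard because $i,i+1$ are not in the same row or column of $w\cdot\t$ (if they were, $w^{-1}$ couldn't reverse them while $\t$ is standard — this uses that $\t$ itself is standard). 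Then $\t' = (w\bs_i)\cdot\t$ wait no, $\bs_i\cdot(w\cdot\t) = (\bs_i w)\cdot\t$, and $\ell(\bs_i w) = \ell(w) - 1$. So induction applied to $\bs_i w$ and $\t$ gives a reduced word $\bs_i w = \bs_{i_{\ell-1}}\cdots\bs_{i_1}$ with all partial products sending $\t$ into $\STab(\lambda)$; then $w = \bs_i \bs_{i_{\ell-1}}\cdots\bs_{i_1}$ is the desired reduced expression, since the final partial product is $\bs_i\cdot(w\cdot\t)^{\text{prev}}$... I should double-check the very last partial product is $(\bs_i w)\cdot\t = w\cdot\t \in \STab(\lambda)$ — yes, trivially. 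The main obstacle here is verifying the claim that $i$ and $i+1$ are never in the same row or column of $w\cdot\t$ given the descent condition and standardness of $\t$; this is the crux and deserves a careful line or two.

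For part (2), I would construct $w$ explicitly. Let $a = \t(i,j)$ and $b = \t(i+1,j)$; note $a < b$ since $\t$ is column standard. The idea is to repeatedly swap $b$ with $b-1, b-2,\dots$ — i.e. apply $\bs_{b-1}, \bs_{b-2},\dots$ — sliding $b$ leftward/downward in value until it sits immediately after $a$. At each step, swapping $k$ and $k-1$ (for $a < k \le b$) keeps the tableau standard provided $k$ and $k-1$ are not in the same row or column; since all the entries strictly between $a$ and $b$ lie, by standardness and the column relation between $(i,j)$ and $(i+1,j)$, either strictly above row $i+1$ or strictly right of column $j$ in positions that do not conflict, one checks the swap is legal. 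Concretely: after moving $b$ down past $b-1,\dots,a+1$, each of these entries shifts up by one in value and stays in its box, and $b$ becomes $a+1$ occupying box $(i+1,j)$; standardness is preserved throughout because none of $a+1,\dots,b-1$ originally occupied box $(i,j)$ or $(i+1,j)$. The resulting permutation $w$ is the cycle sending $b\mapsto a+1$ and $c \mapsto c+1$ for $a+1\le c \le b-1$, which indeed lies in $\sym{[a,b]} = \sym{[\t(i,j),\t(i+1,j)]}$, and in the new tableau the entry in box $(i+1,j)$ is $a+1$ while the entry in box $(i,j)$ is still $a$, so their difference is $1$. The obstacle is again the verification that each intermediate swap preserves standardness; this reduces to checking that for each $c$ with $a < c \le b$, the boxes of $c$ and $c-1$ in the current tableau are in distinct rows and distinct columns, which follows from the fact that $(i,j)$ is the box of $a$, $(i+1,j)$ is the box of $b$, and $\t$ (hence every intermediate tableau, by induction) is standard.

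In both parts the recurring technical point — and the place I expect to spend the most care — is the elementary lemma that applying a basic transposition $\bs_k$ to a standard tableau yields a standard tableau iff $k$ and $k+1$ are in neither the same row nor the same column, together with the bookkeeping that the relevant $k$'s in our constructions always satisfy this. Everything else is routine induction on length.
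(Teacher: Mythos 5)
Part (1) of your proposal is correct once the left/right descent bookkeeping is straightened out: you need a \emph{left} descent of $w$, i.e.\ an $i$ with $\ell(\bs_i w)=\ell(w)-1$, equivalently $w^{-1}(i)>w^{-1}(i+1)$ (not $\ell(w\bs_i)<\ell(w)$, which is the condition $w(i)>w(i+1)$). With that fix, your argument --- peel the outermost generator off a reduced word by finding a left descent $i$ of $w$ such that $i$ and $i+1$ lie in neither the same row nor the same column of $w\cdot\t$, then induct on $\bs_i w$ and $\t$ --- is the mirror image of the paper's proof, which instead takes a right descent $j$ of $w$ (so $w(j)>w(j+1)$), checks $\bs_j\cdot\t\in\STab(\lambda)$, and inducts on $w\bs_j$ and $\bs_j\cdot\t$. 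Both routes are equally valid.

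Part (2), however, has a genuine gap: the permutation you construct does not work. Take $\lambda=(2,2)$, $\t=\IT^{(2,2)}$ (first row $1,2$; second row $3,4$) and $(i,j)=(1,2)$, so $a=\t(1,2)=2$ and $b=\t(2,2)=4$. Your $w$ is the cycle sending $4\mapsto 3$ and $3\mapsto 4$, i.e.\ $w=(3,4)$, and $w\cdot\t$ has second row $4,3$, which is not standard; indeed the very first swap fails because $3$ and $4$ lie in the \emph{same row} of $\t$. Your claim that every entry strictly between $a$ and $b$ sits ``strictly above row $i+1$ or strictly right of column $j$'' is false: such an entry can occupy a box of row $i+1$ to the left of column $j$ (here $3=\t(2,1)$), and that is exactly the obstruction. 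The correct $w$ in this example is $(2,3)$ --- sometimes one must move $a$ \emph{up} rather than $b$ down. The paper's proof handles this with a two-case induction on $b-a$: if some $a'\in[a+1,b-1]$ does not lie in row $i$, apply the cycle $(a,a+1,\dotsc,a')$ with $a'$ minimal such (raising the entry of box $(i,j)$ to $a+1$); only when all of $[a+1,b-1]$ lies in row $i$ does one apply $(b-1,b)$. In either case the interval $[\t'(i,j),\t'(i+1,j)]$ strictly shrinks and standardness is preserved. Your one-directional slide cannot be repaired without introducing essentially this case distinction.
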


\begin{proof} \hfill
\begin{enumerate}
\item
  We prove by induction on the length $\ell(w)$ of $w$.  There is nothing to prove if $\ell(w) =0$.  For $\ell(w) >0$, we have $w \ne 1$, so that $w(j) > w(j+1)$ for some $j \in [1,n-1]$.  Since $\t$ and $w \cdot \t$ are standard, $j$ and $j+1$ cannot be lying in the same row or same column in $\t$, so that $\bs_j \cdot \t \in \STab(\lambda)$.  Now $\ell (w\bs_j) = \ell(w) -1$ \cite[1.4 Corollary]{Mathasbook99}, and so applying the induction hypothesis to $w{\bs_j}$ and $\bs_j \cdot \t$ finishes the proof.
\item
Let $a_{\t} = \t(i,j)$ and $b_{\t}= \t(i+1,j)$.  We prove by induction on $b_{\t} - a_{\t}$, where $w = 1$ if $b_{\t} - a_{\t}=1$.
For $b_{\t} - a_{\t} >1$, if there exists $a' \in [a_{\t}+1,b_{\t}-1]$ which does not lie in the $i$-th row of $\t$, then choose $a'$ to be the least such and let $\t' = (a_{\t},a_{\t}+1,\dotsc, a') \cdot \t$; otherwise let $\t' = (b_{\t}-1,b_{\t}) \cdot \t$.  Then $\t' \in \STab(\lambda)$ with $[a_{\t'} , b_{\t'}] = [\t'(i,j) , \t'(i+1,j)]  \subsetneq [a_{\t}, b_{\t}]$, and applying the induction hypothesis to $\t'$ finishes the proof.
\end{enumerate}
\end{proof}

\subsection{Dual Specht modules}  \label{subsec:Specht-and-Young-basis}
Let $\lambda$ be a partition of $n$.
We briefly review the construction of the signed permutation module $\tilde{M}^\lambda_{\ZZ}$ \cite[\S7.4]{Fulton97}.
Let $\ZZ\Tab(\lambda)$ be the free $\ZZ$-module with basis $\Tab(\lambda)$.
Then $\tilde{M}^\lambda_{\ZZ}$ is the quotient of $\ZZ\Tab(\lambda)$ by the relations $\gamma \cdot \t = \sgn(\gamma)\t$ for all $\t \in \Tab(\lambda)$ and $\gamma \in C_{\t}$. Let $[\t]$ denote the image of $\t$ in $\tilde{M}^{\lambda}_\ZZ$.  The left action of $\sym{n}$ on $\Tab(\lambda)$ induces a $\ZZ\sym{n}$-module structure on $\tilde{M}^{\lambda}_{\ZZ}$, with $\sigma[\t] = [\sigma \cdot \t]$ for all $\t \in \Tab(\lambda)$ and $\sigma \in \sym{n}$.
\begin{comment}
Two $\lambda$-tableaux $\s$ and $\t$ are \emph{row equivalent} if $\t = \sigma \cdot \s$ for some $\sigma\in R_{\s}$, and a $\lambda$-tabloid is a row equivalence class of $\lambda$-tableaux, which we usually write as $\{\t\}$ and depict, for example, as follows:
 \[
 \ytableausetup{boxsize=1em,tabloids}
 \ytableaushort{1654,72,3} =
 \ytableaushort{1456,27,3}
 .\]
The left action of $\sym{n}$ on $\Tab(\lambda)$ induces an action on the set of $\lambda$-tabloids, i.e.\ $\sigma \cdot \{ \t \} = \{ \sigma \cdot \t\}$ for $\sigma \in \sym{n}$ and $\t \in \Tab(\lambda)$, and $M_{\ZZ}^{\lambda}$ is the associated permutation representation of this action over $\ZZ$. Clearly, each $\lambda$-tabloid contains a unique row standard $\lambda$-tableau.
\end{comment}

The {\em integral dual Specht module} $S^{\ZZ}_\lambda$ is the $\ZZ$-span of the polytabloids $e_{\t} := \sum_{\rho \in R_{\t}} \rho[\t]$.  This is actually a $\ZZ\sym{n}$-submodule of $\tilde{M}^{\lambda}_{\ZZ}$, since $\sigma e_{\t} = e_{\sigma \cdot \t}$ for all $\t \in \Tab(\lambda)$ and $\sigma \in \sym{n}$.
These polytabloids satisfy the following:
\begin{itemize}
\item If $\t \in \Tab(\lambda)$ and $\rho \in R_{\t}$, then
\begin{equation} \label{E:row-equiv}
e_{\t} = e_{\rho \cdot \t}.
\end{equation}
\item If $X$ and $Y$ are subsets of the $i$-th and $(i+1)$-th rows of $\t \in \Tab(\lambda)$ respectively with $|X\cup Y|>\lambda_i$ and $G_{X,Y}$ is a left transversal of $\sym{X}\sym{Y}$ in $\sym{X\cup Y}$,
then
\begin{equation} \label{E:Garnir-2}
\left(\sum_{\sigma\in G_{X,Y}} \sigma\right) e_\t=0.
\end{equation}
(A proof of this may be adapted from that of \cite[Theorem 7.2]{GJames}.)
\end{itemize}
The following relation then follows from \eqref{E:row-equiv} and \eqref{E:Garnir-2}:
If $\t \in \Tab(\lambda)$ and $X$ is a subset of its $(i+1)$-th row, then
\begin{equation}  \label{E:Garnir-1}
e_{\t} = (-1)^k \sum_{Y} e_{\t_Y}
\end{equation}
where $k = |X|$ and the sum runs over all $k$-element subsets $Y$ of the $i$-th row of $\t$, and $\t_Y$ is any $\lambda$-tableau obtained from $\t$ by interchanging $X$ and $Y$ (any such $\t_Y$ gives the same $e_{\t_Y}$). %The symmetric group acts naturally on the polytabloids via $\sigma e_{\t} = e_{\sigma\cdot \t}$ for all $\sigma \in \sym{n}$ and $\t \in \Tab(\lambda)$.

\begin{eg}
Consider the $(3,2)$-tableau \[\ytableausetup{notabloids}\t=
\raisebox{1mm}{
\begin{ytableau}
\scriptstyle 1 & \scriptstyle 4& \scriptstyle 5\\ \scriptstyle 2& \scriptstyle 3
\end{ytableau}
}
\, .\]
For this example, let $e_{i,j}$ denote the polytabloid containing the tabloid with the numbers $i<j$ in the second row, so that, for example, $e_\t=e_{2,3}$. %The polytabloid $e_\t$ can be `straightened' %in two steps (first $X=\{2\}$ for $e_{2,3}$ and then $X=\{1,3\}$ for $e_{1,3}$) using \eqref{E:Garnir-1}:
%\begin{align*}
%  e_{2,3}&=-(e_{1,3}+e_{3,4}+e_{3,5})\\
%  &=-((e_{2,4}+e_{2,5}+e_{4,5})+e_{3,4}+e_{3,5}).
%\end{align*}
%On the other hand, 
Using \eqref{E:Garnir-2} with $X=\{4,5\}$ and $Y=\{2,3\}$, we have
\begin{align*}
  0&=(1+(2,3,4)+(2,3,5,4)+(3,4)+(3,5,4)+(2,4)(3,5))e_{2,3}\\
  &=e_{2,3}+e_{3,4}+e_{3,5}+e_{2,4}+e_{2,5}+e_{4,5},
\end{align*}
so $e_{\t} = e_{2,3} = - e_{3,4} - e_{3,5} -e_{2,4} -e_{2,5} -e_{4,5}$. %(Note that all the polytabloids on the righthand-side are standard.)
\end{eg}

In particular, \eqref{E:row-equiv} and \eqref{E:Garnir-2} imply that the set $\{ e_{\t} \mid \t \in \STab(\lambda)\}$ of {\em standard polytabloids} is a basis---called the {\em standard basis}---for $S^{\ZZ}_{\lambda}$.  In addition, they also imply the following `straightening rules' which will be used in this paper.

\begin{prop} \label{prop:Garnir}
Let $\lambda \vdash n$, and let $\t \in \RSTab(\lambda) \setminus \STab(\lambda)$, say for $(i,j), (i+1,j) \in [\lambda]$ we have $a := \t(i,j) > \t(i+1,j) =: b$.
\begin{enumerate}
\item Let $X = \{ \t(i,s) \mid s \in [j, \lambda_i] \}$ and $Y = \{ \t(i+1,r) \mid r \in [1,j] \}$ (so $|X \cup Y| = \lambda_i+1$).  Pick a left transversal $G_{X,Y}$ of $\sym{X}\sym{Y}$ in $\sym{X \cup Y}$ so that $\{ 1, (a,b) \} \subseteq G_{X,Y}$.  Then
    $$
    \t \lhd \overline{(a,b) \cdot \t} \lhd \overline{\tau \cdot \t}
    $$
    for all $\tau \in G_{X,Y} \setminus \{ 1, (a,b) \}$, and
    $$
    e_{\t} = -e_{(a,b) \cdot \t} - \sum_{\tau \in G_{X,Y} \setminus \{1,(a,b)\}} e_{\tau \cdot \t}.
    $$
\item %For each $\s \in \STab(\lambda)$ with $\s \rhd \t$, there exists $c_\s \in \ZZ$ such that
%$$
%e_{\t} = \sum_{\substack{\s \in \STab(\lambda) \\ \s \rhd \t}} c_{\s} e_{\s}.
%$$
The polytabloid $e_{\t}$ lies in the $\ZZ$-span of $\{ e_{\s} \mid \s \in \STab(\lambda),\ \s \rhd \t\}$.
\item Suppose that, for some $k,l \in \ZZ^+$, we have:
\begin{enumerate}
\item[\normalfont {(I)}] $\t {\downarrow_{k}}$ and the subtableau of $\t$ consisting of the first $l$ rows are both column standard;
\item[\normalfont {(II)}] $\Shape(\t {\downarrow_k})$ has either $l$ or $l+1$ rows, and its $l$-th row is at least as long as the $(l+1)$-th row of $\lambda$.
\end{enumerate}
Then $e_{\t}$ lies in the $\ZZ$-spanned of $$\{ e_{\s} \mid \s \in \STab(\lambda),\ \s {\downarrow_k} = \t {\downarrow_k},\, \text{the first $l$ rows of $\s$ are the same as those of $\t$} \}.$$
\end{enumerate}
\end{prop}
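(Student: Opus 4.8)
The plan is to prove the three parts in order, using part (1) as the engine for parts (2) and (3). For part (1), I would first verify the dominance claims. The key combinatorial fact is that $\overline{(a,b)\cdot\t}$ is obtained from $\t$ by swapping the entry $a$ in row $i$ with the entry $b$ in row $i+1$ and then reordering the two affected rows; since $a > b$, every partial shape $\Shape((\overline{(a,b)\cdot\t}){\downarrow_r})$ weakly dominates $\Shape(\t{\downarrow_r})$, with strict inequality somewhere (at $r = b$, say), so $\t \lhd \overline{(a,b)\cdot\t}$. For a general $\tau \in G_{X,Y}\setminus\{1,(a,b)\}$, the permutation $\tau$ moves a $\geq 1$-element subset of $X$ (all of which are $\geq a$ in $\t$, being in row $i$ to the right of or at column $j$) up into row $i$'s... wait — rather: $\tau$ exchanges a nonempty subset of $Y$ (row $i+1$, columns $\leq j$, so entries $\leq b < a$) with an equal-size subset of $X$; one checks that the resulting row-standard tableau dominates $\overline{(a,b)\cdot\t}$ because it pushes at least as much "mass" (and in the $\tau \neq (a,b)$ case, strictly more, since more than one entry crosses, or a smaller entry crosses) up from row $i+1$ to row $i$. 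The identity $e_{\t} = -e_{(a,b)\cdot\t} - \sum_{\tau} e_{\tau\cdot\t}$ is then exactly the Garnir relation \eqref{E:Garnir-1} applied with the $(i+1)$-th-row set $X' = \{\t(i,j)\} $... more precisely, apply \eqref{E:Garnir-1} to the $k$-element set $X$ sitting in row $i$ and the matching sets $Y$ in row $i+1$ (the standard Garnir element for the pair of columns), isolating the term $\tau = 1$ (giving $e_\t$) and the term $\tau = (a,b)$ from the rest; rearranging gives the stated formula after checking the sign via the $k$-element interchange with $k = |X|$.

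For part (2), I would argue by induction on $\t$ with respect to the dominance order $\unrhd$ on $\RSTab(\lambda)$ (noting $\unrhd$ is a partial order with the standard tableaux among the maximal elements, and $\IT^\lambda$ the unique maximum). If $\t \in \STab(\lambda)$ there is nothing to prove. Otherwise pick $(i,j),(i+1,j)$ with $\t(i,j) > \t(i+1,j)$ and apply part (1): $e_\t$ is an integral combination of $e_{\overline{(a,b)\cdot\t}}$ and the $e_{\overline{\tau\cdot\t}}$, each of which is indexed by a row standard tableau strictly larger than $\t$ in dominance. By induction each of those is an integral combination of standard polytabloids $e_\s$ with $\s$ dominating that tableau, hence dominating $\t$; collecting terms and using that the standard polytabloids are linearly independent gives the claim with $c_\s \in \ZZ$.

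For part (3), the idea is to run the same straightening induction but to check that the straightening steps never disturb the first $l$ rows or the subtableau $\t{\downarrow_k}$. Concretely: if $\t$ already satisfies the conclusion's constraints and is not standard, the first non-column-standard violation $(i,j),(i+1,j)$ must, by hypotheses (I) and (II), occur either with $i \geq l+1$ entirely below row $l$, or with the offending entries lying outside the region fixed by $\t{\downarrow_k}$ — here hypothesis (II), that $\Shape(\t{\downarrow_k})$ has $l$ or $l+1$ rows with its $l$-th row at least as long as the $(l+1)$-th row of $\lambda$, is what guarantees that the Garnir sets $X, Y$ chosen in part (1) are disjoint from the cells filled by $\t{\downarrow_k}$ and from the first $l$ rows, so every $\tau\cdot\t$ produced still agrees with $\t$ on $\t{\downarrow_k}$ and on its first $l$ rows. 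One then descends in the dominance order as in part (2), staying inside the prescribed set of tableaux, until reaching standard tableaux.

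The main obstacle I anticipate is the bookkeeping in part (3): pinning down exactly why hypotheses (I) and (II) force the first column-standardness violation to lie in a "safe" region and why the particular transversal $G_{X,Y}$ from part (1) keeps the straightening confined there. The dominance estimates in part (1) for the general $\tau$ (as opposed to $\tau = (a,b)$) also require some care to get the strictness right, but that is a standard — if slightly fiddly — tableau computation.
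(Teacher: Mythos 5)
Your overall strategy---straightening by downward induction on the dominance order, then confining the straightening to a ``safe'' region for part (3)---is the same as the paper's, and your outlines of parts (2) and (3) match its proofs (for (3), the point you correctly isolate is that (I) and (II) force any violation $\t(i,j)>\t(i+1,j)$ to have $i>l$ and all of $X\cup Y$ consisting of entries $>k$, so the Garnir moves never touch $\t{\downarrow_k}$ or the first $l$ rows).

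The genuine gaps are both in part (1), on which everything else rests. First, the identity $\sum_{\tau\in G_{X,Y}}e_{\tau\cdot\t}=0$ is \emph{not} an instance of \eqref{E:Garnir-1}: in \eqref{E:Garnir-1} a $k$-element subset of row $i+1$ is interchanged with \emph{equal-size} subsets of the whole of row $i$, whereas here $X$ and $Y$ have sizes $\lambda_i-j+1$ and $j$ (generally unequal), $|X\cup Y|=\lambda_i+1>\lambda_i$, and one sums over a left transversal of $\sym{X}\sym{Y}$ in $\sym{X\cup Y}$. This is the \emph{other} standard form of the Garnir relations; deducing it from the defining relations of $S^{\ZZ}_{\lambda}$ is essentially the content of James's Theorem 7.2, and the paper obtains it by passing to the auxiliary realisation $\widetilde{S}^{\lambda}_{\ZZ}$ and the relation \eqref{E:Garnir-2}. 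Your ``isolate two terms and rearrange'' step does not produce it. Second, the strict minimality $\overline{(a,b)\cdot\t}\lhd\overline{\tau\cdot\t}$ for every $\tau\in G_{X,Y}\setminus\{1,(a,b)\}$ is the real content of the dominance claim, and ``more mass, or a smaller entry, crosses'' is not yet an argument: since every element of $X$ exceeds every element of $Y$, \emph{every} nontrivial $\tau$ strictly increases the tableau, and what must be shown is that $(a,b)$---which swaps the \emph{smallest} element of $X$ with the \emph{largest} element of $Y$---gives the unique minimum among them. The paper proves this by induction on the number of elements of $X$ landing in row $i+1$ of $\tau\cdot\t$, handling the one-crossing case $\tau=(a',b')$ via the chain $\overline{\tau\cdot\t}\unrhd\overline{(a,a')\cdot(\tau\cdot\t)}\unrhd\overline{(b,b')(a,a')\cdot(\tau\cdot\t)}=\overline{(a,b)\cdot\t}$, with one inequality strict because $a'>a$ or $b'<b$. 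You would need to supply an argument of this kind; as written, the two central claims of part (1) are asserted rather than proved.
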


%As we are unable to find a good reference in the existing literature for Proposition \ref{prop:Garnir}, we shall provide a brief proof here.
We provide a proof below for the assertion about the dominance order in part (1); the remaining are direct consequences of \eqref{E:row-equiv} and \eqref{E:Garnir-2}.

\begin{proof}
Recall the following fact from \cite[3.7 Lemma]{Mathasbook99}: if $c$ lies in a higher row than $d$ in $\t$ and $c > d$, then $\overline{(c,d) \cdot \t} \rhd \t$, which immediately yields $\overline{(a,b) \cdot \t} \rhd \t$.
    For $\overline{\tau \cdot \t} \rhd \overline{ (a,b) \cdot \t}$, we prove by induction on the number $m$ of elements in $X$ that lie in the $(i+1)$-th row of $\tau \cdot \t$ (which is also the number of elements in $Y$ that lie in the $i$-th row of $\tau \cdot \t$).
    If $m = 1$, let $a' \in X$ be in the $(i+1)$-th row of $\tau \cdot \t$ and let $b' \in Y$ be in the $i$-th row of $\tau \cdot \t$. Since $\tau \ne (a,b)$, we have $b' < b$ or $a' > a$, so that
    $$
    \overline{\tau \cdot \t} \unrhd \overline{(a,a') \cdot (\tau \cdot \t)} \unrhd \overline{(b,b')(a,a') \cdot (\tau \cdot \t)} = \overline{(a,b) \cdot \t}$$
    (where $(a,a')$ and $(b,b')$ are to be read as $1$ if $a=a'$ and $b=b'$ respectively), with at least one of the inequalities being strict.
    For $m>1$, take $a' \in X$ and $b' \in Y$ in the $(i+1)$-th and $i$-th rows of $\tau \cdot \t$ respectively, and let $\tau' \in G_{X,Y}$ such that $\tau' \cdot \t$ is row equivalent to $(a',b') \cdot (\tau \cdot \t)$ (two tableaux $\s$ and $\s'$ are row equivalent if $\s' = \rho \cdot \s$ for some $\rho \in R_{\s}$).  Then there are $m-1$ elements in $X$ that lie in the $(i+1)$-th row of $\tau' \cdot \t$, and so
    $$
    \overline{\tau \cdot \t} \rhd \overline{(a',b') \cdot (\tau \cdot \t)} = \overline{\tau' \cdot \t} \unrhd (a,b) \cdot \t,
    $$
    where the last inequality follows from induction.
%
%
%    The second assertion follows immediately from \eqref{E:Garnir-2}.
%
%\item This follows from part (1) and induction since the largest row-standard $\lambda$-tableau, namely $\IT^{\lambda}$, is standard.
%
%\item The hypothesis on $\t$ ensures that whenever $\t(i,j) > \t(i+1,j)$ we have $i > l$ and $\t(i,j) > k$.  As such, when $e_\t$ is expressed as a linear combination of larger row-standard tableaux using part (1), all the tableaux $\s$ involved satisfy $\s {\downarrow_k} = \t {\downarrow_k}$ and the first $l$ rows of $\s$ are the same as those of $\t$, and hence the statement holds by induction.
%\end{enumerate}
\end{proof}

Given a commutative ring $\mathcal{O}$ with $1$, define $S^\mathcal{O}_{\lambda} := \mathcal{O} \otimes_{\ZZ} S_{\lambda}^{\ZZ}$.  All the above statements about $S^{\ZZ}_{\lambda}$ behave well under base change, so that analogous statements hold when $\ZZ$ is replaced by $\mathcal{O}$.
In this paper, we are most concerned when $\mathcal{O} = \QQ$, in which case the set $\{ S^{\QQ}_{\lambda} \mid \lambda \vdash n \}$ is a complete set of pairwise non-isomorphic irreducible $\QQ\sym{n}$-modules.

We have the following result on the coefficients of an element of $S^{\mathcal{O}}_{\lambda}$ when expressed in terms of its standard basis:

\begin{prop} \label{prop:semistandard}
Let $u \in S^{\mathcal{O}}_{\lambda}$, say $u= \sum_{\t \in \STab(\lambda)} a_{\t} e_{\t}$ where $a_{\t} \in \mathcal{O}$ for all $\t \in \STab(\lambda)$.  Let $I = \{ i \in [1,n-1] \mid \bs_i u = u \}$, and let $W_I = \left< \bs_i \mid i \in I \right>$.
\begin{enumerate}
  \item If $\t, \t' \in \STab(\lambda)$ such that $\sigma \cdot \t = \t'$ for some $\sigma \in W_I$, then $a_\t=a_{\t'}$.
  \item If $\t \in \STab(\lambda)$ and there exists some $(i,j) \in W_I$ such that $i$ and $j$ lie in the same column of $\t$, then $a_\t=0$.
\end{enumerate}
\end{prop}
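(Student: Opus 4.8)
I take $2$ to be invertible in $\mathcal{O}$ --- this is necessary (part~(2) fails for $\mathcal{O}=\FF_2$ and $u=e_{\t}$, where $\t$ is the standard $(2,1,1)$-tableau with $\t(2,1)=2$ and $\t(3,1)=3$: then $\bs_2 u=u$ yet $a_{\t}=1\neq 0$) and it holds in the case $\mathcal{O}=\QQ$ of interest. Everything reduces to a single Coxeter generator $\bs_k$ with $k\in I$, where $\bs_k u=u$. The key observation is a trichotomy for $\bs_k\cdot\t$, $\t\in\STab(\lambda)$: either $\bs_k\cdot\t\in\STab(\lambda)$; or $k,k+1$ lie in the same row of $\t$, in which case $\bs_k\cdot\t$ is row equivalent to $\t$, so $e_{\bs_k\cdot\t}=e_{\t}$; or $k,k+1$ lie in the same column of $\t$ (call $\t$ a \emph{$k$-column tableau} in this case), in which case $\bs_k\cdot\t$ is row standard but not column standard and Proposition~\ref{prop:Garnir}(1)--(2) gives
\[
  e_{\bs_k\cdot\t}\;=\;-e_{\t}\;+\;\sum_{\substack{\s\in\STab(\lambda)\\ \s\rhd\t}}c^{\t}_{\s}\,e_{\s}.
\]

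The heart of the matter is, for each $k\in I$, the statement (2$'$) that $a_{\t}=0$ for every $k$-column tableau $\t$; once this is known, the statement (1$'$) that $a_{\t}=a_{\bs_k\cdot\t}$ whenever $\bs_k\cdot\t\in\STab(\lambda)$ follows at once. Both are extracted by expanding $\bs_k u=\sum_{\s}a_{\s}e_{\bs_k\cdot\s}$ in the standard basis via the trichotomy and comparing coefficients with $u$. For (2$'$), fix a $k$-column tableau $\t$ and look at the coefficient of $e_{\t}$: an $\s$ of the first two types contributes nothing (since $\bs_k\cdot\t$ is not standard and $\t$ is not a $k$-row tableau), $\s=\t$ contributes $-a_{\t}$, and the remaining $k$-column tableaux $\s$ contribute only when $\s\lhd\t$; equating with the coefficient $a_{\t}$ coming from $u$ gives
\[
  2a_{\t}\;=\;\sum_{\substack{\s\text{ a }k\text{-column tableau}\\ \s\lhd\t}}a_{\s}\,c^{\s}_{\t}.
\]
Since $2$ is a unit, a Noetherian induction on $\unrhd$, with a $\unrhd$-minimal $k$-column tableau as base case, now gives $a_{\t}=0$. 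Granting (2$'$), the coefficient of $e_{\t_0}$ (for $\bs_k\cdot\t_0$ standard) on the two sides of $\bs_k u=u$ collapses to $a_{\bs_k\cdot\t_0}-a_{\t_0}=0$, which is (1$'$).

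From these, the general statements are formal. For part~(1): if $\sigma\in W_I$ and $\t,\sigma\cdot\t\in\STab(\lambda)$, then by Lemma~\ref{lem:std}(1) $\sigma$ admits a reduced expression $\bs_{i_\ell}\dotsm\bs_{i_1}$ all of whose partial products keep $\t$ standard; as $\sigma$ lies in the standard parabolic subgroup $W_I$, every $i_j$ lies in $I$, so applying (1$'$) along this chain of standard tableaux gives $a_{\t}=a_{\sigma\cdot\t}$. For part~(2): if $(i,j)\in W_I$ with $i,j$ in the same column of $\t$, then $\sym{[i,j]}\subseteq W_I$ and $[i,j-1]\subseteq I$, and Lemma~\ref{lem:std}(2) supplies $w\in\sym{[i,j]}$ with $w\cdot\t\in\STab(\lambda)$ having two consecutive integers $k,k+1$ in the same column; then $a_{\t}=a_{w\cdot\t}$ by part~(1) and $a_{w\cdot\t}=0$ by (2$'$), since $k\in[i,j-1]\subseteq I$.

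I do not expect a deep obstacle: the single substantive ingredient is Proposition~\ref{prop:Garnir}, which renders the coefficient comparison in (2$'$) unitriangular with respect to $\unrhd$ so that the induction closes; the rest is careful bookkeeping. The one point requiring genuine care is the hypothesis that $2$ be invertible in $\mathcal{O}$ --- without it, part~(2) is false.
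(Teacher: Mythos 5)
Your proof is correct and follows essentially the same route as the paper's: the same trichotomy for the action of a single $\bs_k$ ($k\in I$) on standard polytabloids via Proposition \ref{prop:Garnir}, the same comparison of coefficients combined with a minimality/induction argument along $\unrhd$ to kill the coefficients of the $k$-column tableaux, and the same formal reduction of parts (1) and (2) to single generators via Lemma \ref{lem:std}. Your remark that $2$ must be invertible (or at least a non-zero-divisor) in $\mathcal{O}$ is a genuine sharpening: the paper states the proposition for an arbitrary commutative ring, but its own proof tacitly needs $2a_{\t}\neq 0$ at the step where $a_{\t}=-a_{\t}$ is declared a contradiction, and your $\FF_2$ counterexample shows the statement as written does fail there --- harmlessly for the paper, which only invokes the result over $\QQ$.
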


\begin{proof}
First, fix $i \in I$, and let
\begin{align*}
S_1 &= \{ \t \in \STab(\lambda)\mid  \text{$i$ and $i+1$ lie in the same row of $\t$} \}; \\
S_2 &= \{ \t \in \STab(\lambda)\mid  \text{$i$ and $i+1$ lie in the same column of $\t$} \}; \\
S_3 &= \STab(\lambda) \setminus (S_1 \cup S_2).
\end{align*}
Then we have:

\begin{enumerate}
\item[(i)] $\bs_ie_{\t} = e_{\t}$ for all $\t \in S_1$;
\item[(ii)] $\bs_i e_{\t} = - e_{\t} + \sum_{\s \in \STab(\lambda): \s \rhd \t} c_{\s} e_{\s}$ for all $\t \in S_2$ by Proposition \ref{prop:Garnir}(1) and (2);
\item[(iii)] $\bs_i \cdot \t \in S_3$ (and so $\bs_i e_{\t} = e_{\bs_i \cdot \t}$) for all $\t \in S_3$.
\end{enumerate}

For each $j \in [1,3]$, let $V_j$ be the $\mathcal{O}$-span of the $e_{\t}$'s with $\t \in S_j$.
Then $S^{\mathcal{O}}_{\lambda} = V_1 \oplus V_2 \oplus V_3$. Let $u = u_1 + u_2 + u_3$, where $u_j\in V_j$ for all $j$.
From the above description of $\bs_ie_{\t}$, we have $\bs_i u_1 = u_1 \in V_1$ and $\bs_i u_3 \in V_3$.
Furthermore, if $u_2 \ne 0$, let $\t \in S_2$ be minimal (with respect to $\unrhd$) such that $a_{\t} \ne 0$; then the coefficient of $e_{\t}$ in $\bs_iu_2$ is $-a_{\t}$ by (ii) above, contradicting
$$u_1 + u_2+ u_3 = u = \bs_i u = \bs_i(u_1 + u_2 + u_3) = u_1 + \bs_iu_2 + \bs_iu_3.$$
Thus, $u_2 = 0$, i.e.\ $a_{\t} =0$ for all $\t \in \STab(\lambda)$ with $i$ and $i+1$ in the same column of $\t$, and so $\bs_i u_3 = u_3$, which yields $a_{\t} = a_{\bs_i \cdot \t}$ for all $\t \in S_3 = \{ \s \in \STab(\lambda) \mid \bs_i \cdot \s \in \STab(\lambda) \}$.

Now, if $\t, \sigma \cdot \t \in \STab(\lambda)$ where $\sigma \in W_I$,
then $\bs_{i_1} \cdot \t,(\bs_{i_2} \bs_{i_1}) \cdot \t, \dotsc, (\bs_{i_{\ell}} \dotsm \bs_{i_1}) \cdot \t \in \STab(\lambda)$ for some reduced expression $\sigma = \bs_{i_{\ell}}\dotsb\bs_{i_{1}}$ by Lemma \ref{lem:std}(1).
Since $W_I$ is a parabolic subgroup, we have $i_j \in I$ for all $j$,
so that
$$a_{\t} = a_{\bs_{i_1} \cdot \t} = a_{(\bs_{i_2} \bs_{i_1}) \cdot \t} = \dotsb = a_{(\bs_{i_{\ell}} \dotsm \bs_{i_1}) \cdot \t} = a_{\sigma \cdot \t}$$
by the paragraph above, proving part (1).

For part (2), if $\t \in \STab(\lambda)$ has $i$ and $j$ ($i< j$) lying in the same column with $(i,j) \in W_I$, then $[i,j-1] \subseteq I$ since $(i,j) = \bs_{j-1}\dotsm\bs_{i+1}\bs_i\bs_{i+1}\dotsm \bs_{j-1}$ is a reduced expression and $W_I$ is a parabolic subgroup.
As the entry $j'$ in $\t$ directly below $i$ satisfies $i < j' \leq j$, we therefore have $(i,j') \in W_I$.
Thus, by replacing $j$ with $j'$ if necessary, we may assume that $i$ and $j$ lie in adjacent rows.  Consequently, there exists $w \in \sym{[i,j]} \subseteq W_I$
such that $w \cdot \t \in \STab(\lambda)$ and $w(j) - w(i) = 1$ by Lemma \ref{lem:std}(2), so that $a_{\t} = a_{w \cdot \t} = 0$ by part (1) and the first paragraph respectively, since $w(i) \in [i,j-1] \subseteq I$.
\end{proof}

\subsection{Young's seminormal basis} \label{subsec:seminormal-basis}

Young's seminormal basis is first defined by Murphy in \cite{Murphy92} for the Iwahori-Hecke algebra of the symmetric group.  This induces Young's seminormal basis for the Specht modules of this algebra (see, for example, \cite[3.33]{Mathasbook99}) which satisfies a recurrence relation (see \cite[3.36 Theorem]{Mathasbook99}).  As we only need this recurrence relation and not the precise definition of Murphy (or Mathas), we define Young's seminormal basis (at the limit $q\to 1$) by this characterising relation.

\begin{prop} \label{prop:seminormal}
Let $\lambda \vdash n$.  For each $\s \in \STab(\lambda)$, there exists a unique $f_\s \in S_{\lambda}^{\QQ}$ such that the set $\{ f_{\s} \in S_{\lambda}^{\QQ} \mid \s \in \STab(\lambda) \}$ satisfies the following:
\begin{enumerate}
\item[\rm (i)] $f_{\IT^{\lambda}} = e_{\IT^{\lambda}}$;
\item[\rm (ii)] $f_{\bs_i \cdot \s} = -\tfrac{1}{r_{\s,i}}f_{\s} + \bs_i f_{\s}$ for any $i\in [1,n-1]$ and $\s,\, \bs_i \cdot \s \in \STab(\lambda)$ with $\bs_i \cdot \s \lhd \s$, where $r_{\s,i} = \res_{\s}(i+1) - \res_{\s}(i)$.
\end{enumerate}
Furthermore, $\{ f_\s \mid \s \in \STab(\lambda)\}$ is a basis for $S_{\lambda}^{\QQ}$, called {\em Young's seminormal basis}, and
$$
\bs_i f_{\s} =
\begin{cases}
   r_{\s,i} f_{\s} &\text{if $r_{\s,i}= \pm 1$}; \\
   \tfrac{1}{r_{\s,i}} f_{\s} + f_{\bs_i \cdot \s}, &\text{if $r_{\s,i} \leq -2$}; \\
      \tfrac{1}{r_{\s,i}} f_{\s} + (1-\frac{1}{r_{\s,i}^2}) f_{\bs_i \cdot \s}, &\text{if $r_{\s,i} \geq 2$}.
\end{cases}
$$
\end{prop}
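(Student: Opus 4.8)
The plan is to construct the family $\{f_\s\}$ explicitly by downward induction on the dominance order on $\STab(\lambda)$, starting from $f_{\IT^\lambda} = e_{\IT^\lambda}$ and using the Recursion as the definition whenever we pass from a tableau $\s$ to $\bs_i \cdot \s \lhd \s$. The first task is to check that this is well-defined: any two standard tableaux are connected by a chain of such elementary moves (by Lemma \ref{lem:std}(1)), but a given tableau may be reachable by several chains, so one must verify the recursively defined value is independent of the chosen chain. I would handle this by the standard Coxeter-theoretic argument: it suffices to check compatibility for the two types of braid/commutation relations among the $\bs_i$, i.e.\ when $\bs_i \bs_j = \bs_j \bs_i$ ($|i-j|\geq 2$) and when $\bs_i \bs_j \bs_i = \bs_j \bs_i \bs_j$ ($|i-j|=1$), restricted to those sub-chains lying entirely within $\STab(\lambda)$. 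This reduces the consistency check to a bounded computation in the relevant rank-$1$ and rank-$2$ situations.

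Next I would establish the displayed action formula for $\bs_i f_\s$. The three cases correspond to where $i$ and $i+1$ sit relative to each other in $\s$: if they are in the same row then $r_{\s,i}=1$; if in the same column then $r_{\s,i}=-1$; otherwise $|r_{\s,i}|\geq 2$ and $\bs_i\cdot\s$ is again standard, lying above or below $\s$ according to the sign of $r_{\s,i}$. The cases $r_{\s,i}=\pm1$ follow from Proposition \ref{prop:Garnir}(1) together with a downward induction showing $f_\s$ is a combination of $e_\t$ with $\t \unrhd \s$ (the unitriangularity of the transition matrix), exactly as in the proof of Proposition \ref{prop:semistandard}: one shows $\bs_i f_\s = \pm f_\s$ by noting $\bs_i f_\s \mp f_\s$ has no component on tableaux $\unrhd \s$ yet lies in the span of such, hence vanishes. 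For $r_{\s,i}\leq -2$, the Recursion gives $f_{\bs_i\cdot\s} = -\frac1{r_{\s,i}} f_\s + \bs_i f_\s$ directly (since $\bs_i\cdot\s \lhd \s$ in this case), which rearranges to the stated formula. For $r_{\s,i}\geq 2$ we have $\bs_i\cdot\s \rhd \s$; applying $\bs_i$ to both sides of the Recursion relation $f_\s = -\frac1{r_{\bs_i\cdot\s,i}} f_{\bs_i\cdot\s} + \bs_i f_{\bs_i\cdot\s}$, using $\bs_i^2 = 1$ and $r_{\bs_i\cdot\s,i} = -r_{\s,i}$, and substituting the already-proved $r_{\s,i}\leq -2$ formula for $\bs_i f_{\bs_i\cdot\s}$, yields $\bs_i f_\s = \frac1{r_{\s,i}} f_\s + (1 - \frac1{r_{\s,i}^2}) f_{\bs_i\cdot\s}$ after simplification.

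Finally, that $\{f_\s\}$ is a basis follows from the unitriangularity mentioned above: the change of basis matrix from $\{e_\t\}$ to $\{f_\s\}$ is unitriangular with respect to $\unrhd$, hence invertible over $\QQ$. Uniqueness of the family is immediate, since the Initial condition and Recursion determine every $f_\s$ once well-definedness is known.

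I expect the main obstacle to be the well-definedness check --- verifying that the recursive rule is consistent across the braid relations. Concretely, for $|i-j|=1$ one must confront a configuration where $i, i+1, i+2$ occupy three cells of $\s$ such that all intermediate tableaux along both sides of $\bs_i\bs_j\bs_i = \bs_j\bs_i\bs_j$ stay standard, and then expand $f$ at the common endpoint two ways using the Recursion, checking the residue coefficients $r_{\cdot,\cdot}$ match up. This is a finite but slightly delicate computation because one must also rule out, or separately treat, the sub-chains that leave $\STab(\lambda)$ (where the Recursion does not directly apply but the $r_{\s,i}\geq2$ action formula must be invoked instead); organizing the case analysis cleanly — ideally by first proving the action formula for $\bs_i f_\s$ in the ``same row'' and ``same column'' degenerate cases and only then closing the induction — is where the real work lies. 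An alternative that sidesteps part of this is to cite Murphy's or Mathas's construction for the Hecke algebra and specialize $q\to 1$, but since the excerpt opts to take the recurrence as the definition, the self-contained consistency argument seems to be what is intended.
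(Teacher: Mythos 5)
Your proposal takes a genuinely different route from the paper's. The paper dispatches existence, the basis property and the action formula in one stroke by citing the explicit construction of the $f_{\s}$'s via Jucys--Murphy idempotents from \cite[Definition 2.2, Theorem 2.3]{FLT} (ultimately Murphy's construction), transported into $S^{\QQ}_{\lambda}$ along the isomorphism $\QQ\sym{n}f_{\IT^{\lambda}} \cong S^{\QQ}_{\lambda}$; only uniqueness is argued directly, by induction on $\unrhd$ using chains of standard tableaux as in Lemma \ref{lem:std}(1). You instead attempt a self-contained construction with the Recursion as the definition. That is the classical Young/Hoefsmit strategy and can in principle be made to work, but as written your argument has two real gaps.

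First, the consistency check across braid relations --- which you correctly identify as the heart of the matter --- is not carried out, and it cannot be organized in the order you propose: when one side of a move $\bs_i\bs_{i+1}\bs_i = \bs_{i+1}\bs_i\bs_{i+1}$ passes through a non-standard tableau, comparing the two evaluations requires knowing how $\bs_i$ acts on $f_{\s}$ when $\bs_i \cdot \s \notin \STab(\lambda)$, i.e.\ the degenerate formula $\bs_i f_{\s} = \pm f_{\s}$, yet you propose to prove that formula only after well-definedness is established. Second, and more seriously, your argument for the degenerate cases does not work. Unitriangularity gives $f_{\s} = e_{\s} + \sum_{\t \rhd \s} c_{\t} e_{\t}$, but for $\t \rhd \s$ the integers $i$ and $i+1$ need not lie in the same row or column of $\t$, so $\bs_i e_{\t}$ is in general $e_{\bs_i\cdot\t}$ for some other standard tableau, and $\bs_i f_{\s} \mp f_{\s}$ is not visibly zero on the tableaux $\unrhd \s$; note that Proposition \ref{prop:semistandard} argues in the opposite direction (from $\bs_i u = u$ to constraints on the coefficients), so it cannot be invoked here. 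Establishing $\bs_i f_{\s} = r_{\s,i} f_{\s}$ when $r_{\s,i} = \pm 1$ genuinely requires extra input --- for instance the Jucys--Murphy eigenvector characterization of $f_{\s}$, which is precisely what the construction cited in \cite{FLT} supplies. Your handling of the non-degenerate cases $|r_{\s,i}|\geq 2$ and of the basis claim via unitriangularity is fine.
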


\begin{proof}
Clearly, if there exists $\{ f_{\s} \in S_{\lambda}^{\QQ} \mid \s \in \STab(\lambda) \}$ satisfying the required conditions, then the $f_{\s}$'s can be easily seen to be unique by induction on $\s$ with the order $\unrhd$  by Lemma \ref{lem:std}(1), since the conditions prescribe $f_{\IT^{\lambda}}$, and $f_{\bs_i \cdot \s}$ in terms of $f_{\s}$ whenever $\bs_i \cdot \s \in \STab(\lambda)$ with $\bs_i \cdot \s \lhd \s$.

For existence, note that $\{ f_{\s} \mid \s \in \STab(\lambda)\}$ as defined in \cite[Definition 2.2(2)]{FLT} is a basis for $\QQ\sym{n}f_{\IT^{\lambda}}$ \cite[Theorem 2.3(6)]{FLT}, and that there is a $\QQ\sym{n}$-isomorphism $\phi : \QQ\sym{n}f_{\IT^{\lambda}} \to S^{\QQ}_{\lambda}$ sending $f_{\IT^{\lambda}}$ to $e_{\IT^{\lambda}}$. Since these $f_{\s}$'s satisfy $f_{\bs_i \cdot \s} = -\tfrac{1}{r_{\s,i}}f_{\s} + \bs_i f_{\s}$ whenever $\s, \bs_i \cdot \s \in \STab(\lambda)$ with $\bs_i \cdot \s \lhd \s$ by \cite[Theorem 2.3(4)]{FLT}, they are indeed the ones stipulated by the proposition once we identify them with their images under $\phi$.

The remaining assertion about $\bs_i f_{\s}$ also follows from \cite[Theorem 2.3(4)]{FLT}.
\end{proof}

\begin{defn} \label{defn:D}
Let $\lambda \vdash n$.
\begin{enumerate}
\item Denote the transition matrix from the standard basis to Young's seminormal basis of $S^{\QQ}_{\lambda}$ by $(q_{\uu, \vv})_{\uu, \vv \in \STab(\lambda)}$ (thus $f_{\s} = \sum_{\vv \in \STab(\lambda)} q_{\s,\vv}e_{\vv}$ for all $\s \in \STab(\lambda)$).
\item Define $D: \STab(\lambda) \to \QQ\sym{n}$ by $\DD{\s} = \sum_{\vv \in \STab(\lambda)} q_{\s,\vv} d(\vv)$ for all $\s \in \STab(\lambda)$. (Recall that $d(\vv)$ is the unique element in $\sym{n}$ such that $d(\vv) \cdot \IT^{\lambda} = \vv$.)
\item For each $\s \in \STab(\lambda)$, the {\em denominator} $\dd_{\s}$ of $f_{\s}$ is
the smallest positive integer $k$ such that $kf_\s$ lies in the $\ZZ$-span of the standard basis of $S^{\QQ}_{\lambda}$.
\end{enumerate}
\end{defn}

\begin{lem} \label{lem:f_in_terms_of_e}
Let $\lambda \vdash n$, and $\s,\t \in \STab(\lambda)$.  We have:% \KJ{and $\mathcal{O}mega_\s=\{\rr\in\STab(\lambda):\rr\rhd\s\}$.}
\begin{enumerate}
\item $q_{\s,\s} =1$, and $q_{\s,\t} =0$ unless $\t \unrhd \s$;
\item $f_{\s} = \DD{\s}\, e_{\t^{\lambda}} = \DD{\s} f_{\t^{\lambda}}$, and $\DD{\s} \in \QQ W$ for any parabolic subgroup $W$ of $\sym{n}$ with $d(\s) \in W$;
\item $\dd_{\s} = \operatorname{lcm}\{ \text{denominator of } q_{\s,\vv} \mid \vv\in \STab(\lambda) \} = \min\{ k \in \mathbb{Z}^+ \mid k\DD{\s} \in \ZZ\sym{n}\}$.
\end{enumerate}
\end{lem}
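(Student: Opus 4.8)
All three parts reduce to unwinding the recursion of Proposition~\ref{prop:seminormal} and the straightening rules of Proposition~\ref{prop:Garnir}, as follows.

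\emph{The first identity of (2), and part (3).} By the definitions of $D$ and of the $q_{\s,\vv}$, and since $d(\vv)\cdot\IT^{\lambda}=\vv$,
\[
\DD{\s}\,e_{\IT^{\lambda}}=\sum_{\vv\in\STab(\lambda)}q_{\s,\vv}\,e_{d(\vv)\cdot\IT^{\lambda}}=\sum_{\vv\in\STab(\lambda)}q_{\s,\vv}\,e_{\vv}=f_{\s},
\]
while $e_{\IT^{\lambda}}=f_{\IT^{\lambda}}$ is the initial condition. For part (3), $\{e_{\vv}\mid\vv\in\STab(\lambda)\}$ is at once a $\ZZ$-basis of $S^{\ZZ}_{\lambda}$ and a $\QQ$-basis of $S^{\QQ}_{\lambda}$, so $kf_{\s}=\sum_{\vv}kq_{\s,\vv}e_{\vv}$ lies in the $\ZZ$-span of the standard basis if and only if $kq_{\s,\vv}\in\ZZ$ for all $\vv$; and since the $d(\vv)$ are distinct elements of $\sym{n}$, hence part of the standard $\ZZ$-basis of $\ZZ\sym{n}$, the element $k\DD{\s}=\sum_{\vv}kq_{\s,\vv}d(\vv)$ lies in $\ZZ\sym{n}$ if and only if the same condition holds. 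The least positive $k$ meeting it is $\lcm$ of the denominators of the $q_{\s,\vv}$, which yields both equalities of part (3).

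\emph{Part (1).} Argue by induction on $\s$ with respect to $\unrhd$, the base case $\s=\IT^{\lambda}$ being the initial condition. If $\s\neq\IT^{\lambda}$ then some $i$ lies in a strictly lower row of $\s$ than $i+1$ (otherwise the rows of $1,\dotsc,n$ in $\s$ would be weakly increasing, forcing $\s=\IT^{\lambda}$); fix such an $i$, so that $\rr:=\bs_i\cdot\s$ is standard with $\rr\rhd\s$ (by \cite[3.7 Lemma]{Mathasbook99}), and the recursion reads $f_{\s}=-\tfrac1{r_{\rr,i}}f_{\rr}+\bs_if_{\rr}$. Expand $f_{\rr}=\sum_{\vv\unrhd\rr}q_{\rr,\vv}e_{\vv}$ by the inductive hypothesis and act by $\bs_i$ termwise: $\bs_ie_{\vv}=e_{\vv}$ if $i,i+1$ share a row of $\vv$; if $i$ lies in a strictly lower row of $\vv$ than $i+1$ then $\bs_i\cdot\vv$ is standard with $\bs_i\cdot\vv\rhd\vv\unrhd\rr\rhd\s$; and if $i$ lies in a strictly higher row of $\vv$ than $i+1$ then $\bs_i\cdot\vv$ is row standard (standard if $i,i+1$ are in different columns of $\vv$, otherwise with a single column violation), $\bs_ie_{\vv}=e_{\bs_i\cdot\vv}$, and by Proposition~\ref{prop:Garnir}(2) $e_{\bs_i\cdot\vv}$ is a $\ZZ$-combination of $e_{\uu}$ with $\uu\unrhd\bs_i\cdot\vv$. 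In this last case $\bs_i\cdot\vv\unrhd\bs_i\cdot\rr=\s$: for $r\neq i$ one has $\Shape((\bs_i\cdot\vv){\downarrow_r})=\Shape(\vv{\downarrow_r})\unrhd\Shape(\rr{\downarrow_r})=\Shape(\s{\downarrow_r})$, since swapping $i,i+1$ does not change which boxes hold $\{1,\dotsc,r\}$, while the case $r=i$ follows from the elementary statement that, for compositions $\mu\unrhd\nu$, integers $p<q$, $p'<q'$, and $\epsilon_k$ the $k$-th coordinate vector, the inequalities $\mu+\epsilon_p\unrhd\nu+\epsilon_{p'}$ and $\mu+\epsilon_p+\epsilon_q\unrhd\nu+\epsilon_{p'}+\epsilon_{q'}$ imply $\mu+\epsilon_q\unrhd\nu+\epsilon_{q'}$, applied with $\mu=\Shape(\vv{\downarrow_{i-1}})$, $\nu=\Shape(\rr{\downarrow_{i-1}})$ and $p,q$ (resp.\ $p',q'$) the rows of $i,i+1$ in $\vv$ (resp.\ $\rr$), which in the present situation satisfy $p<q$ and $p'<q'$. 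That statement is a one-line case check on the $k$-th partial sums according to whether $q$, $q'$, $p\leq k$. Finally, the only contribution to the coefficient of $e_{\s}$ is $\bs_ie_{\rr}=e_{\bs_i\cdot\rr}=e_{\s}$, so $q_{\s,\s}=1$. (Unitriangularity of this transition matrix is of course well known; cf.\ the Introduction and \cite{Mathasbook99}.)

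\emph{The parabolic statement of (2).} Since distinct elements of $\sym{n}$ are $\QQ$-linearly independent, ``$\DD{\s}\in\QQ W$ for every parabolic $W\ni d(\s)$'' is equivalent to: $q_{\s,\vv}\neq0\Rightarrow d(\vv)\in W_{0}$, where $W_{0}=\langle\bs_i\mid i\in\operatorname{supp}(d(\s))\rangle$ is the smallest parabolic containing $d(\s)$. By part (1), $q_{\s,\vv}\neq0$ forces $\vv\unrhd\s$, so it suffices to prove $\vv\unrhd\s\Rightarrow\operatorname{supp}(d(\vv))\subseteq\operatorname{supp}(d(\s))$. For this I use that $\operatorname{supp}(w)=\{i\in[1,n-1]\mid w([1,i])\neq[1,i]\}$ for $w\in\sym{n}$, together with the observation that $d(\vv)([1,i])=[1,i]$ exactly when $\vv$ and $\IT^{\lambda}$ fill the first $i$ boxes (in row-reading order) with $\{1,\dotsc,i\}$, i.e.\ exactly when $\Shape(\vv{\downarrow_i})=\Shape(\IT^{\lambda}{\downarrow_i})$. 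As $\IT^{\lambda}$ is $\unrhd$-maximal and $\vv\unrhd\s$, we have $\Shape(\IT^{\lambda}{\downarrow_i})\unrhd\Shape(\vv{\downarrow_i})\unrhd\Shape(\s{\downarrow_i})$, so $\Shape(\s{\downarrow_i})=\Shape(\IT^{\lambda}{\downarrow_i})$ forces $\Shape(\vv{\downarrow_i})=\Shape(\IT^{\lambda}{\downarrow_i})$, which is precisely $\operatorname{supp}(d(\vv))\subseteq\operatorname{supp}(d(\s))$. The one genuinely non-formal point in the whole lemma is the combinatorial monotonicity statement used in part (1) (itself elementary), which I expect to be the main obstacle; if instead one cites the unitriangularity of part (1) as known, the rest is pure bookkeeping.
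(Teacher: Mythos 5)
Your proof is correct, and for the two non-routine claims it takes a genuinely different route from the paper. For part (1) the paper simply cites \cite[Proposition 2.5(4)]{FLT}, whereas you reprove unitriangularity from scratch by descending induction along $\unrhd$, using the recursion of Proposition \ref{prop:seminormal} together with Proposition \ref{prop:Garnir}(2); the one delicate point, that $\overline{\bs_i\cdot\vv}\unrhd\bs_i\cdot\rr$ when $\vv\unrhd\rr$ and $i$ sits above $i+1$ in both, is exactly your partial-sum lemma, and the case check there is sound (in case B one has $q>k\geq q'>p'$, so the hypothesis at level $i$ or $i+1$ forces $\sum_{j\le k}\mu_j\geq\sum_{j\le k}\nu_j+1$). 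For the parabolic assertion in part (2), the paper invokes the dominance--Bruhat correspondence of \cite[3.7 Lemma]{Mathasbook99}: $\vv\rhd\s$ gives $d(\vv)$ as a subexpression of a reduced expression of $d(\s)$, hence $d(\vv)\in W$. You instead characterise $\operatorname{supp}(d(\vv))$ via $\{i\mid \Shape(\vv{\downarrow_i})\neq\Shape(\IT^{\lambda}{\downarrow_i})\}$ and squeeze $\Shape(\IT^{\lambda}{\downarrow_i})\unrhd\Shape(\vv{\downarrow_i})\unrhd\Shape(\s{\downarrow_i})$ using the maximality of $\IT^{\lambda}$. Both arguments are valid; the paper's is shorter given the cited Coxeter-theoretic facts, while yours is self-contained and stays entirely within the tableau combinatorics already set up in Section \ref{sec:preliminaries}. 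Parts (2, first identity) and (3) are handled identically in both.
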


\begin{proof}
Part (1) is \cite[Proposition 2.5(4)]{FLT}, while the first assertion of part (2) is clear from the definition of $\DD{\s}$.
By (1), $\DD{\s} = d(\s) + \sum_{\vv \rhd \s} q_{\s,\vv} d(\vv)$.  Note that $\vv \rhd \s$ implies that $d(\vv)$ has a reduced expression which is a subexpression of a reduced expression of $d(\s)$ \cite[3.7 Lemma]{Mathasbook99}, so that $d(\vv) \in W$ when $W$ is a parabolic subgroup of $\sym{n}$ with $d(\s) \in W$.
Thus $\DD{\s} \in \QQ W$. Part (3) follows from parts (1) and
(2) immediately.
\end{proof}

The following result, which we require in this paper, is a generalisation of \cite[Proposition 2.5(2)]{FLT}.

\begin{prop}	\label{prop:same-relative-positions}
Let $I \subseteq [1,n-1]$, and $W_I = \left< \bs_i \mid i \in I \right>$.
Let $\s \in \STab(\lambda)$ for some $\lambda \vdash n$, and define $\Gamma_{I,\s} =\{\tau \in W_I \mid \tau \cdot \s \in \STab(\lambda)\}$.  Let $A \in \QQ W_I$.
\begin{enumerate}
\item
Then $A f_{\s}$ lies in the $\QQ$-span of $\{ f_{\tau \cdot \s} \mid \tau \in \Gamma_{I,s} \}$.
\item  If $\t\in \STab(\nu)$ (for another partition $\nu$) and $z  \in \ZZ$ satisfy $i+z \in [1,|\nu|-1]$ and $\res_{\s}(i+1) - \res_{\s}(i) = \res_{\t}(i+z+1) - \res_{\t}(i+z)$ for all $i \in I$, then
$$
A^{+z} f_{\t} = \sum_{\tau\in \Gamma_{I,\s}} a_{\tau} f_{\tau^{+z} \cdot \t}
$$
when $A f_{\s} = \sum_{\tau \in \Gamma_{I,\s}} a_{\tau} f_{\tau \cdot \s}$ (by (1), with $a_{\tau} \in \QQ$ for all $\tau \in \Gamma_{I,s}$).
\end{enumerate}
\end{prop}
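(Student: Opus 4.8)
The plan is to deduce part (1) from the explicit action of the $\bs_i$ on the seminormal basis, and then to obtain part (2) by transporting the resulting identity along the shift $\sigma\mapsto\sigma^{+z}$, using that this shift preserves the relevant residue differences. For part (1), let $V_\s$ denote the $\QQ$-span of $\{ f_{\tau\cdot\s}\mid\tau\in\Gamma_{I,\s}\}$ inside $S^{\QQ}_{\lambda}$. I would first check that $\bs_i V_\s\subseteq V_\s$ for every $i\in I$: for $\tau\in\Gamma_{I,\s}$, Proposition \ref{prop:seminormal} writes $\bs_i f_{\tau\cdot\s}$ as a $\QQ$-combination of $f_{\tau\cdot\s}$ and, when $r_{\tau\cdot\s,i}\ne\pm1$, of $f_{\bs_i\cdot(\tau\cdot\s)}=f_{(\bs_i\tau)\cdot\s}$; in that case $\bs_i\cdot(\tau\cdot\s)$ is again standard (since $r_{\tau\cdot\s,i}\ne\pm1$ means that $i$ and $i+1$ lie in distinct rows and columns of $\tau\cdot\s$) and $\bs_i\tau\in W_I$, so $\bs_i\tau\in\Gamma_{I,\s}$ and $f_{(\bs_i\tau)\cdot\s}\in V_\s$. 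Since $W_I=\langle\bs_i\mid i\in I\rangle$ and $1\in\Gamma_{I,\s}$, it follows that $V_\s$ is a $\QQ W_I$-submodule containing $f_\s=f_{1\cdot\s}$, so $Af_\s\in V_\s$ for every $A\in\QQ W_I$; the required coefficients $a_\tau$ are then the expansion coefficients, which are uniquely determined since $\{ f_{\tau\cdot\s}\mid\tau\in\Gamma_{I,\s}\}$ is part of the seminormal basis.

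For part (2), the heart of the matter is the following comparison statement: for every $\tau\in W_I$,
$$
\tau\cdot\s\in\STab(\lambda)\iff\tau^{+z}\cdot\t\in\STab(\nu),
$$
and, when this holds, $\res_{\tau\cdot\s}(i+1)-\res_{\tau\cdot\s}(i)=\res_{\tau^{+z}\cdot\t}(i+z+1)-\res_{\tau^{+z}\cdot\t}(i+z)$ for all $i\in I$; consequently $\tau\mapsto\tau^{+z}$ restricts to a bijection $\Gamma_{I,\s}\to\Gamma_{I+z,\t}$. To prove the forward direction I would pick, using Lemma \ref{lem:std}(1), a reduced expression $\tau=\bs_{i_\ell}\cdots\bs_{i_1}$ all of whose partial products send $\s$ to a standard tableau; since $W_I$ is a parabolic subgroup, every $i_j$ lies in $I$. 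One then argues by induction on $j$ that the partial products $\bs_{i_j+z}\cdots\bs_{i_1+z}$ likewise send $\t$ to standard tableaux and that the residue-difference sequences of $(\bs_{i_j}\cdots\bs_{i_1})\cdot\s$ on $I$ and of $(\bs_{i_j+z}\cdots\bs_{i_1+z})\cdot\t$ on $I+z$ remain equal. The inductive step is a short, elementary computation: applying one transposition $\bs_{i_0}$ with $i_0\in I$ (and corresponding residue difference not $\pm1$, so that the new tableau is standard) changes the residue differences only at the indices $i_0-1$, $i_0$, $i_0+1$, and each new value is a telescoping combination of old values at indices that again lie in $I$, so the equality persists. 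The reverse implication follows by applying the same argument to $\t$, $\s$, $I+z$ and $-z$ in place of $\s$, $\t$, $I$ and $z$ (the standing hypotheses are symmetric under interchanging the two tableaux and negating $z$).

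Granting the comparison statement, I would finish as follows. Let $V_\t$ be the $\QQ$-span of $\{ f_{\sigma\cdot\t}\mid\sigma\in\Gamma_{I+z,\t}\}$ and define the $\QQ$-linear isomorphism $\Phi\colon V_\s\to V_\t$ by $f_{\tau\cdot\s}\mapsto f_{\tau^{+z}\cdot\t}$ (well defined and bijective by the comparison statement and the fact that the relevant vectors form parts of the respective seminormal bases). Using Proposition \ref{prop:seminormal} on both sides, together with the equality of residue differences, one checks on the spanning vectors that $\Phi(\bs_i v)=\bs_{i+z}\Phi(v)$ for all $i\in I$ and $v\in V_\s$: the scalars appearing in the recursion depend only on $r_{\tau\cdot\s,i}=r_{\tau^{+z}\cdot\t,i+z}$, and the two ``lower'' tableaux $(\bs_i\tau)\cdot\s$ and $(\bs_i\tau)^{+z}\cdot\t=\bs_{i+z}\cdot(\tau^{+z}\cdot\t)$ correspond under $\Phi$. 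Since $(\,\cdot\,)^{+z}$ restricts to a group isomorphism $W_I\to W_{I+z}$ sending $\bs_i$ to $\bs_{i+z}$ (and $W_{I+z}\subseteq\sym{|\nu|}$ by the hypothesis that $i+z\in[1,|\nu|-1]$ for $i\in I$), $\QQ$-linearity upgrades this to $\Phi(Av)=A^{+z}\Phi(v)$ for all $A\in\QQ W_I$. Taking $v=f_\s$, so that $\Phi(v)=f_\t$, and invoking part (1) gives
$$
A^{+z}f_\t=\Phi(Af_\s)=\Phi\Bigl(\sum_{\tau\in\Gamma_{I,\s}}a_\tau f_{\tau\cdot\s}\Bigr)=\sum_{\tau\in\Gamma_{I,\s}}a_\tau f_{\tau^{+z}\cdot\t},
$$
as desired. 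The main obstacle is the comparison statement, and within it the verification that equality of residue differences on $I$ propagates through each elementary move; this is exactly where one uses that $I$ indexes a parabolic subgroup, so that the neighbours $i_0\pm1$ of a moved index either belong to $I$ or play no role.
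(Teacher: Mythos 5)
Your proposal is correct, and part (1) is essentially the paper's argument repackaged: the paper proves both parts by a single induction on $\ell(\sigma)$ for $\sigma\in W_I$, peeling off one generator $\bs_i$ at a time and applying Proposition \ref{prop:seminormal}, which is exactly your verification that $V_\s$ is a $\QQ W_I$-submodule. Where the two proofs genuinely diverge is in how the key identity $r_{\tau\cdot\s,i}=r_{\tau^{+z}\cdot\t,\,i+z}$ is obtained for part (2). The paper proves it directly for \emph{all} $\tau\in W_I$ and $i\in I$, with no induction: it telescopes the hypothesis to get $\res_{\s}(a_2)-\res_{\s}(a_1)=\res_{\t}(a_2+z)-\res_{\t}(a_1+z)$ whenever $\sym{[a_1,a_2]}\subseteq W_I$, observes that $\tau^{-1}(i)$ and $\tau^{-1}(i+1)$ both lie in the maximal interval $J$ containing $i$ with $\sym{J}\subseteq W_I$ (since $\tau$ permutes $J$), and then uses $\res_{\tau\cdot\uu}(j)=\res_{\uu}(\tau^{-1}(j))$; the matching of supports (i.e.\ $c_\tau=0$ exactly when the lower tableau is not standard) then comes for free from the matching of the $r$'s. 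You instead establish the residue matching, together with the equivalence of standardness, by induction along a reduced expression supplied by Lemma \ref{lem:std}(1), tracking the local effect of each $\bs_{i_0}$ on the differences at $i_0-1$, $i_0$, $i_0+1$, and you package the conclusion as an explicit intertwiner $\Phi$ and bijection $\Gamma_{I,\s}\to\Gamma_{I+z,\t}$. Both are valid; the paper's computation is shorter and avoids reduced expressions entirely, while yours is more elementary and makes the correspondence of the two index sets explicit. One small quibble: your closing sentence credits the propagation step to parabolicity of $W_I$, but parabolicity is really only needed to ensure that reduced expressions of elements of $W_I$ use generators indexed by $I$; the propagation itself merely uses that indices outside $I$ need not be tracked.
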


\begin{proof}
We prove both parts together. It suffices to show these statements for $A = \sigma \in W_I$, and for this, we prove by induction on $\ell = \ell(\sigma)$, with $\ell = 0$ being trivial.  For $\ell>0$, let $\sigma = \bs_i \rho$, with $\ell(\rho) = \ell -1$ and $i \in I$.  By induction,
for every $\tau\in\Gamma_{I,\s}$, there exists $a'_{\tau} \in \QQ$ such that
$$
\rho f_{\s} = \sum_{\tau\in\Gamma_{I,\s}} a'_{\tau} f_{\tau \cdot \s}   \qquad \text{and} \qquad
\rho^{+z} f_{\t}  =  \sum_{\tau\in\Gamma_{I,\s}} a'_{\tau} f_{\tau^{+z} \cdot \t}.
$$
Thus,
$$\sigma f_{\s} = \sum_{\tau\in\Gamma_{I,\s}} a'_{\tau} (\bs_i f_{\tau \cdot \s})   \qquad \text{and} \qquad
\sigma^{+z} f_{\t}  =  \sum_{\tau\in\Gamma_{I,\s}} a'_{\tau} (\bs_i^{+z} f_{\tau^{+z} \cdot \t}).$$
For each $\tau\in\Gamma_{I,\s}$, we have $\bs_i f_{\tau\cdot \s} = b_{\tau} f_{\tau \cdot \s} + c_{\tau} f_{(\bs_i\tau) \cdot \s}$ by Proposition \ref{prop:seminormal}, where $b_\tau, c_{\tau} \in \QQ$ are completely determined by $r_{\tau\cdot \s,i} = \res_{\tau \cdot \s}(i+1) - \res_{\tau\cdot \s}(i)$, with $c_{\tau} = 0$ when $r_{\tau\cdot \s,i} \in \{\pm1\}$, or equivalently, when $(\bs_i\tau) \cdot \s \notin \STab(\lambda)$. Part (1) thus follows.

For part (2), note first that for $1 \leq a_1 < a_2 \leq n$ with $\sym{[a_1,a_2]} \subseteq W_I$, we have
\begin{align*}
\res_{\s}(a_2) - \res_{\s}(a_1)
= \sum_{i=a_1}^{a_2-1} \res_{\s}(i+1)-\res_{\s}(i)
&= \sum_{i=a_1}^{a_2-1} \res_{\t}(i+z+1)-\res_{\t}(i+z) \\
&= \res_{\t}(a_2+z) - \res_{\t}(a_1+z).
\end{align*}
In particular, for $\rho \in W_I$ and $i \in I$, since the largest integer interval $J$ containing $i$ such that $\sym{J} \subseteq W_I$ also contains $i+1$, and $\rho(J) = J$, we have
$$
\res_{\s}(\rho(i+1)) - \res_{\s}(\rho(i))
= \res_{\t}(\rho(i+1)+z)-\res_{\s}(\rho(i)+z).
$$
Next, note also that $\res_{\sigma \cdot \uu}(j) = \res_{\uu}(\sigma^{-1}(j))$ for all $\sigma \in \sym{n}$, $\uu \in \Tab(\lambda)$ and $j\in [1,n]$.
Thus, for $\tau \in W_I$ and $i \in I$, we have
\begin{align*}
r_{\tau \cdot \s,i} = \res_{\tau \cdot \s}(i+1) - \res_{\tau\cdot \s}(i) &= \res_{\s}(\tau^{-1}(i+1)) - \res_{\s}(\tau^{-1}(i)) \\
&= \res_{\t}(\tau^{-1}(i+1) + z) - \res_{\t}(\tau^{-1}(i) + z) \\
&= \res_{\t} ((\tau^{+z})^{-1}(i+1+z)) - \res_{\t} ((\tau^{+z})^{-1}(i+z)) \\
&= \res_{\tau^{+z}\cdot \t}(i+z+1) - \res_{\tau^{+z}\cdot \t}(i+z) = r_{\tau^{+z} \cdot \t, i+z},
\end{align*}
so that if $\bs_i f_{\tau\cdot \s} = b_{\tau} f_{\tau \cdot \s} + c_{\tau} f_{(\bs_i\tau) \cdot \s}$ then
$$\bs_i^{+z} f_{\tau^{+z} \cdot \t} = \bs_{i+z} f_{\tau^{+z} \cdot \t} = b_{\tau} f_{\tau^{+z} \cdot \t} + c_{\tau} f_{(\bs_{i+z}\tau^{+z}) \cdot \t} = b_{\tau} f_{\tau^{+z} \cdot \t} + c_{\tau} f_{(\bs_i\tau)^{+z} \cdot \t}$$
by Proposition \ref{prop:seminormal}.  Part (2) thus follows.
\end{proof}

\begin{cor} \label{cor:same-relative-positions}
Let $\lambda = (\lambda_1,\dotsc, \lambda_r) $ and $\nu = (\nu_1,\dotsc, \nu_t)$ be partitions with $[\lambda] \subseteq [\nu]$.
\begin{enumerate}[itemsep=3pt]
\item Let $\s,\t \in \STab(\lambda)$.  If $f_{\t} = A f_{\s}$ for some $A \in \QQ\sym{|\lambda|}$, then $f_{\upt{\t}{\nu}} = A f_{\upt{\s}{\nu}}$.

\item For any $m \in [1,\nu_1-\lambda_1]$, we have
    $$
    f_{\upI{\lambda}{\nu}} = \DD{\upa{\lambda}{m}} f_{\upI{\lambda+(m)}{\nu}} =
    \DD{\upa{\lambda}{m}} \DD{\upI{\lambda+(m)}{\nu}} e_{\IT^{\nu}}.$$

\item For $m \in \ZZ^+$ and $i \in [2,r-1]$, we have
\begin{align*}
f_{\upa{\lambda}{m}}
&= \DD{\upa{\lambda^{(i)}}{m}}^{+(|\lambda| - |\lambda^{(i)}|)} f_{\upI{\lambda^{\leqslant i}}{\lambda+(m)}} \\
&= \DD{\upa{\lambda^{(i)}}{m}}^{+(|\lambda| - |\lambda^{(i)}|)} \DD{\upa{\lambda^{\leqslant i}}{m}} e_{\IT^{\lambda+(m)}},
\end{align*}
where $\lambda^{(i)} = (\lambda_1+i-1,\lambda_{i+1},\dotsc, \lambda_r)$ and $\lambda^{\leqslant i} = (\lambda_1,\dotsc, \lambda_i)$.
\end{enumerate}
\end{cor}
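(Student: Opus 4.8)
The plan is to derive all three parts of Corollary~\ref{cor:same-relative-positions} directly from Proposition~\ref{prop:same-relative-positions}, using the structure of the tableaux $\IT^\lambda$ and $\upt{\s}{\nu}$ and the basic properties of $\DD{\cdot}$ recorded in Lemma~\ref{lem:f_in_terms_of_e}. The common mechanism is this: whenever two standard tableaux $\s,\t$ of the same shape satisfy $\s=\sigma\cdot\t$ for some $\sigma$ lying in a parabolic $W_I$ along which the residue differences of $\t$ and of some ``shifted'' tableau agree, Proposition~\ref{prop:same-relative-positions}(2) transports an identity $f_\t=Af_\s$ to an identity between the corresponding seminormal vectors of larger tableaux. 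So in each part I must (i) identify the relevant parabolic subgroup $W_I$, (ii) check the residue-difference matching condition, and (iii) apply the proposition, then rewrite $A$ as the appropriate $\DD{\cdot}$ or $\DD{\cdot}^{+z}$ using Lemma~\ref{lem:f_in_terms_of_e}(2).

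For part (1): by Definition~\ref{defn:uparrow}, $\upt{\s}{\nu}$ agrees with $\s$ on the nodes $[\lambda]$ (filled with $[1,|\lambda|]$) and fills $[\nu]\setminus[\lambda]$ with $[|\lambda|+1,|\nu|]$ in a fixed way not depending on $\s$; the same holds for $\upt{\t}{\nu}$. Hence, on the index set $I=[1,|\lambda|-1]$, the residues of $\upt{\s}{\nu}$ agree with those of $\s$, and likewise for $\t$. Writing $f_\t=Af_\s$ with $A\in\QQ\sym{|\lambda|}=\QQ W_I$, I apply Proposition~\ref{prop:same-relative-positions}(2) with $z=0$, $\s$ playing both roles (the tableau whose residue differences control the coefficients is $\s$ itself, which lives in $\STab(\lambda)$ as a subtableau of both $\upt{\s}{\nu}$ and the ambient shape). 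The subtlety is that the proposition as stated takes one pair $(\s,\t)$ of tableaux of possibly different shapes; here I use it with the ``abstract'' shape $\lambda$ for $\s$ and the shape $\nu$ for $\upt{\s}{\nu}$, noting $\Gamma_{I,\s}$ is computed in $\STab(\lambda)$, but the matching condition $\res_{\s}(i+1)-\res_{\s}(i)=\res_{\upt{\s}{\nu}}(i+1)-\res_{\upt{\s}{\nu}}(i)$ for $i\in I$ holds by the above observation. The conclusion is $f_{\upt{\t}{\nu}}=Af_{\upt{\s}{\nu}}$ after identifying $\tau\cdot\s\mapsto\tau\cdot(\upt{\s}{\nu})=\upt{(\tau\cdot\s)}{\nu}$, which holds because $\tau\in W_I$ fixes $[|\lambda|+1,|\nu|]$ pointwise so it only permutes the first $|\lambda|$ entries, leaving the appended part intact.

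For part (2): apply part (1) twice. First, $f_{\upa{\lambda}{m}}=f_{\upI{\lambda}{\lambda+(m)}}=\DD{\upa{\lambda}{m}}\,e_{\IT^{\lambda+(m)}}=\DD{\upa{\lambda}{m}}\,f_{\IT^{\lambda+(m)}}$ by Lemma~\ref{lem:f_in_terms_of_e}(2), and since $d(\upa{\lambda}{m})\in\sym{|\lambda+(m)|}=\sym{|\lambda|+m}$, moreover $\DD{\upa{\lambda}{m}}\in\QQ\sym{|\lambda|+m}$ (indeed it lies in $\QQ\sym{[\lambda_r+\dots+?]}$-type parabolic, but $\QQ\sym{|\lambda|+m}$ suffices). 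Now $\IT^{\lambda+(m)}=\upt{\IT^\lambda}{\lambda+(m)}$... wait, more usefully: $\upa{\lambda}{m}$ and $\IT^{\lambda+(m)}$ both restrict to $\IT^\lambda$ on $[\lambda]$... the cleanest route is to observe $f_{\upI{\lambda}{\nu}}=f_{\upt{(\upa{\lambda}{m})}{\nu}}$ — because $\upt{(\upa{\lambda}{m})}{\nu}=\upI{\lambda}{\nu}$ by the transitivity of the $\uparrow$ construction (the filling of $[\nu]\setminus[\lambda]$ proceeds row-by-row, so building up through $\lambda+(m)$ gives the same result) — and similarly $f_{\upI{\lambda+(m)}{\nu}}=f_{\upt{\IT^{\lambda+(m)}}{\nu}}$. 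Since $f_{\upa{\lambda}{m}}=\DD{\upa{\lambda}{m}}f_{\IT^{\lambda+(m)}}$ with $\DD{\upa{\lambda}{m}}\in\QQ\sym{|\lambda|+m}$ a parabolic subgroup of $\sym{|\nu|}$, part (1) (with the roles $\s=\IT^{\lambda+(m)}$, $\t=\upa{\lambda}{m}$, both of shape $\lambda+(m)$, promoted to shape $\nu$) gives $f_{\upI{\lambda}{\nu}}=\DD{\upa{\lambda}{m}}f_{\upI{\lambda+(m)}{\nu}}$. The second equality is then Lemma~\ref{lem:f_in_terms_of_e}(2) applied to $\upI{\lambda+(m)}{\nu}$.

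For part (3): this is the genuinely new case, requiring the full strength of Proposition~\ref{prop:same-relative-positions}(2) with nonzero shift $z$. Set $\lambda^{(i)}=(\lambda_1+i-1,\lambda_{i+1},\dots,\lambda_r)$ and $\lambda^{\leqslant i}=(\lambda_1,\dots,\lambda_i)$; note $|\lambda^{(i)}|+|\lambda^{\leqslant i}|-(\lambda_1+i-1)=|\lambda|$, and the shift is $z=|\lambda|-|\lambda^{(i)}|=|\lambda^{\leqslant i}|-(\lambda_1+i-1)=\lambda_2+\dots+\lambda_i-(i-1)$. The idea: in $\IT^{\lambda+(m)}$, the entries in rows $\geq i+1$ together with the last $m$ boxes of row $1$ occupy a set of positions whose residue pattern matches that of $\IT^{\lambda^{(i)}+(m)}$ after shifting indices up by the number $|\lambda|-|\lambda^{(i)}|$ of entries in rows $2,\dots,i$; meanwhile the first $i$ rows of $\IT^{\lambda+(m)}$ form a copy of $\IT^{\lambda^{\leqslant i}}$ on entries $[1,|\lambda^{\leqslant i}|]$... but actually we need $\IT^{\lambda^{\leqslant i}+(m)}$? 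No — re-examining the statement, the right side involves $\DD{\upa{\lambda^{\leqslant i}}{m}}e_{\IT^{\lambda+(m)}}$, so the $m$ extra boxes get added to $\lambda^{\leqslant i}$ too. The plan is: first write $f_{\upa{\lambda}{m}}=\DD{\upa{\lambda}{m}}f_{\IT^{\lambda+(m)}}$ (Lemma~\ref{lem:f_in_terms_of_e}(2)); then factor $\DD{\upa{\lambda}{m}}$ — or rather, build $f_{\upa{\lambda}{m}}$ in two stages. Stage one: among standard $(\lambda+(m))$-tableaux dominated appropriately, the tableau $\upI{\lambda^{\leqslant i}}{\lambda+(m)}$ is the one whose first $i$ rows come from $\IT^{\lambda}$ (equivalently from $\IT^{\lambda^{\leqslant i}}$ extended by the $m$ boxes) and whose lower rows are filled initially; by part (1) applied to the identity $f_{\upa{\lambda^{\leqslant i}}{m}}=\DD{\upa{\lambda^{\leqslant i}}{m}}e_{\IT^{\lambda^{\leqslant i}+(m)}}$ promoted along the added rows $i+1,\dots,r$, we get $f_{\upI{\lambda^{\leqslant i}}{\lambda+(m)}}=\DD{\upa{\lambda^{\leqslant i}}{m}}e_{\IT^{\lambda+(m)}}$. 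Stage two: I must show $f_{\upa{\lambda}{m}}=\DD{\upa{\lambda^{(i)}}{m}}^{+(|\lambda|-|\lambda^{(i)}|)}f_{\upI{\lambda^{\leqslant i}}{\lambda+(m)}}$; here I take the identity $f_{\upa{\lambda^{(i)}}{m}}=\DD{\upa{\lambda^{(i)}}{m}}\,e_{\IT^{\lambda^{(i)}+(m)}}$ in $S^\QQ_{\lambda^{(i)}+(m)}$, and apply Proposition~\ref{prop:same-relative-positions}(2) with $I$ the support of $\DD{\upa{\lambda^{(i)}}{m}}$ (a parabolic on $[1,|\lambda^{(i)}|+m-1]$... more precisely supported on the indices other than the ``corners''), $\s=\IT^{\lambda^{(i)}+(m)}$, $\t$ the promoted tableau $\IT^{\lambda^{\leqslant i}}{\uparrow}$-into-$\lambda+(m)$ with index shift $z=|\lambda|-|\lambda^{(i)}|$, checking the residue-difference condition $\res_{\IT^{\lambda^{(i)}+(m)}}(j+1)-\res_{\IT^{\lambda^{(i)}+(m)}}(j)=\res_{\upI{\lambda^{\leqslant i}}{\lambda+(m)}}(j+z+1)-\res_{\upI{\lambda^{\leqslant i}}{\lambda+(m)}}(j+z)$ for $j$ in that parabolic — this holds because in $\upI{\lambda^{\leqslant i}}{\lambda+(m)}$ the entries $[z+1,z+|\lambda^{(i)}|+m]$ occupy row $1$ from column $\lambda_2+\dots+\lambda_i$... hmm, need to double-check the column bookkeeping... and rows $i+1,\dots,r$, which is exactly the shape $\lambda^{(i)}+(m)$ with residues shifted. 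The main obstacle is precisely this last residue-matching verification: one must carefully track how the nodes of $\lambda^{(i)}+(m)$ embed (with a residue shift but \emph{not} a residue translation, since shifting a row down changes residues) into $\lambda+(m)$, and confirm that the relevant consecutive-entry residue differences are preserved; the row-$1$ prefix of $\lambda^{(i)}$ being longer than $\lambda_1$ by $i-1$ is what makes the residues line up after the integer shift $z$. Once that is checked, composing the two stages and invoking Lemma~\ref{lem:f_in_terms_of_e}(2) once more yields the displayed two-line identity.
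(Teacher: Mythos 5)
Your proposal is correct and follows essentially the same route as the paper: part (1) is Proposition \ref{prop:same-relative-positions}(2) with $z=0$, part (2) is part (1) applied to $f_{\upa{\lambda}{m}} = \DD{\upa{\lambda}{m}} f_{\IT^{\lambda+(m)}}$ with $\nu$ as ambient shape, and part (3) is the same two-stage factorisation (the shifted Proposition \ref{prop:same-relative-positions}(2) for the $\DD{\upa{\lambda^{(i)}}{m}}^{+z}$ factor, then part (1) for the $\DD{\upa{\lambda^{\leqslant i}}{m}}$ factor). The residue bookkeeping you hesitate over in part (3) does work out: on the range $[\lambda_1+i,\,|\lambda^{(i)}|+m]$ one has $\res_{\IT^{\lambda^{(i)}+(m)}}(j) = \res_{\upI{\lambda^{\leqslant i}}{\lambda+(m)}}(j+z) + i-1$, a constant offset, so the consecutive residue differences match, which is precisely the identity the paper records before invoking the proposition.
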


\begin{proof}
Part (1) follows from Proposition \ref{prop:same-relative-positions}(2) with $z=0$, and part (2) follows from part (1) (with $\t = \upa{\lambda}{m}$, $\s = \IT^{\lambda+(m)}$ and $A = \DD{\upa{\lambda}{m}}$; note that $\upt{\upa{\lambda}{m}}{\nu} = \upI{\lambda}{\nu}$).

For part (3), let $\s = \IT^{\lambda^{(i)}+(m)}$, $\t = \upI{\lambda^{\leqslant i}}{\lambda+(m)}$, and $z = |\lambda| - |\lambda^{(i)}|$.
Then $\res_{\s}(j) = \res_{\t}(j+z)+i-1$ for all $j \in [\lambda_1+i,|\lambda^{(i)}|+m]$.
Since
$$
f_{d(\upa{\lambda^{(i)}}{m}) \cdot \s} =
f_{\upa{\lambda^{(i)}}{m}} = \DD{\upa{\lambda^{(i)}}{m}} f_{\s},
$$
and $d(\upa{\lambda^{(i)}}{m}) \in \sym{[\lambda_1+i,|\lambda^{(i)}|+m]}$ so that $\DD{\upa{\lambda^{(i)}}{m}} \in \QQ\sym{[\lambda_1+i,|\lambda^{(i)}|+m]}$
by Lemma \ref{lem:f_in_terms_of_e}(2), we have
$$
f_{\upa{\lambda}{m}}=f_{d(\upa{\lambda^{(i)}}{m})^{+z} \cdot \t} =\DD{\upa{\lambda^{(i)}}{m}}^{+z} f_{\t}
$$
by Proposition \ref{prop:same-relative-positions}(2).
Since $f_{\upa{\lambda^{\leqslant i}}{m}} = \DD{\upa{\lambda^{\leqslant i}}{m}} f_{\IT^{\lambda^{\leqslant i} + (m)}}$ by Lemma \ref{lem:f_in_terms_of_e}(2), part (1) applies to yield
\begin{align*}
f_{\t} = f_{\upt{\upa{\lambda^{\leqslant i}}{m}}{\lambda+(m)}} &= \DD{\upa{\lambda^{\leqslant i}}{m}} f_{\upI{\lambda^{\leqslant i} + (m)}{\lambda+(m)}} \\
&= \DD{\upa{\lambda^{\leqslant i}}{m}} f_{\IT^{\lambda+(m)}} = \DD{\upa{\lambda^{\leqslant i}}{m}} e_{\IT^{\lambda+(m)}}.
\end{align*}
Thus,
$$
f_{\upa{\lambda}{m}}  = \DD{\upa{\lambda^{(i)}}{m}}^{+z} f_{\t} = \DD{\upa{\lambda^{(i)}}{m}}^{+z} \DD{\upa{\lambda^{\leqslant i}}{m}} e_{\IT^{\lambda+(m)}}.
$$
\end{proof}

This paper focuses on Young's seminormal basis vectors of the form $f_{\upI{\lambda}{\nu}}$, where $\lambda$ and $\nu$ are partitions with $[\lambda] \subseteq [\nu]$.
We end this section by making a useful observation about the coefficient of $e_{\s}$ in $f_{\upI{\lambda}{\nu}}$.

\begin{defn} \label{defn:colour}
Fix countably infinitely many colours $c_1,c_2,\dotsc$, and order them according to the natural order of their subscripts (i.e.\ $c_1< c_2 < \dotsb$).
Let $\lambda$  and $\nu$ be partitions such that $[\lambda] \subseteq [\nu]$.  Write $\nu_{\lambda}$ for the composition obtained by concatenating $\lambda$ and $\nu - \lambda$, i.e.\ $\nu_{\lambda} = (\lambda_1, \dotsc, \lambda_r, \nu_1-\lambda_1,\dotsc, \nu_r - \lambda_r, \nu_{r+1},\dotsc, \nu_t)$ if $\lambda = (\lambda_1,\dotsc, \lambda_r)$ and $\nu = (\nu_1,\dotsc, \nu_t)$.

\begin{enumerate}
\item For each $i \in [1,|\nu|]$, define the colour of $i$ to be $c_j$ if $i$ lies in the $j$-th row of $\IT^{\nu_{\lambda}}$.

  \item Two $\nu$-tableaux $\s$ and $\t$ are said to have the {\em same $\nu_{\lambda}$-colour type} if $\s(i,j)$ and $\t(i,j)$ are of the same colour for all $(i,j) \in [\nu]$.
  \item Given $\t \in \STab(\nu)$, we say that $\t$ is {\em colour-semistandard of type $\nu_{\lambda}$} if the colours of the integers appearing in $\t$ are strictly increasing down each column.
      The set of standard $\nu$-tableaux that are colour-semistandard of type $\nu_{\lambda}$ shall be denoted as $\SSTab(\lambda; \nu-\lambda)$.
\end{enumerate}
\end{defn}

\begin{rem} \hfill
\begin{enumerate}[itemsep=3pt]
\item For $\t \in \STab(\nu)$, let $\nu_{\lambda}(\t)$ be the $\nu$-tableau (of type $\nu_{\lambda}$) obtained from $\t$ by replacing each $i$ appearing in $\t$ by $r_i$ when $i$ appears in the $r_i$-th row of $\IT^{\nu_{\lambda}}$.  Then $\t$ is colour-semistandard of type $\nu_{\lambda}$ if and only if $\nu_{\lambda}(\t)$ is semistandard as a $\nu$-tableau of type $\nu_{\lambda}$ in the usual sense, i.e.\ having entries that are weakly increasing along each row, and strictly increasing down each column.

\item We shall often omit any mention of $\nu_{\lambda}$ when this is obvious from the context, and simply say `colour-semistandard' and `colour type'.

\item We write $\SSTab(\lambda;m)$ for $\SSTab(\lambda;(m))$ (when $\nu = \lambda + (m)$).
\end{enumerate}
\end{rem}

\begin{eg}\label{Eg: colour-ss}
Let $\lambda = (3,3,2,2)$ and $\nu=(4,3,2,2)$ so that $\nu_\lambda=(3,3,2,2,1)$.  The following is a complete list of representatives $\s\in \SSTab(\lambda;1)$ with distinct colour types:
\[\ytableausetup{notabloids}
\begin{matrix}
\s_0 \\[3pt]
\begin{ytableau}
\scriptstyle 1 & \scriptstyle 2 & \scriptstyle 3 & \scriptstyle \textcolor{teal}{11}  \\
\scriptstyle \textcolor{red}{4} & \scriptstyle \textcolor{red}{5} & \scriptstyle \textcolor{red}{6} \\
\scriptstyle \textcolor{blue}{7} & \scriptstyle \textcolor{blue}{8} \\
\scriptstyle \textcolor{violet}{9} & \scriptstyle \textcolor{violet}{10}
\end{ytableau}
\end{matrix}
\qquad
\begin{matrix}
\s_1 \\[3pt]
\begin{ytableau}
\scriptstyle 1 & \scriptstyle 2 & \scriptstyle 3 & \scriptstyle \textcolor{red}{6}  \\
\scriptstyle \textcolor{red}{4} & \scriptstyle \textcolor{red}{5} & \scriptstyle \textcolor{teal}{11} \\
\scriptstyle \textcolor{blue}{7} & \scriptstyle \textcolor{blue}{8} \\
\scriptstyle \textcolor{violet}{9} & \scriptstyle \textcolor{violet}{10}
\end{ytableau}
\end{matrix}
\qquad
\begin{matrix}
\s_2 \\[3pt]
\begin{ytableau}
\scriptstyle 1 & \scriptstyle 2 & \scriptstyle 3 & \scriptstyle \textcolor{violet}{10}  \\
\scriptstyle \textcolor{red}{4} & \scriptstyle \textcolor{red}{5} & \scriptstyle \textcolor{red}{6} \\
\scriptstyle \textcolor{blue}{7} & \scriptstyle \textcolor{blue}{8} \\
\scriptstyle \textcolor{violet}{9} & \scriptstyle \textcolor{teal}{11}
\end{ytableau}
\end{matrix}
\qquad
\begin{matrix}
\s_3 \\[3pt]
\begin{ytableau}
\scriptstyle 1 & \scriptstyle 2 & \scriptstyle 3 & \scriptstyle \textcolor{blue}{8}  \\
\scriptstyle \textcolor{red}{4} & \scriptstyle \textcolor{red}{5} & \scriptstyle \textcolor{red}{6} \\
\scriptstyle \textcolor{blue}{7} & \scriptstyle \textcolor{violet}{10} \\
\scriptstyle \textcolor{violet}{9} & \scriptstyle \textcolor{teal}{11}
\end{ytableau}
\end{matrix}
\qquad
\begin{matrix}
\s_4 \\[3pt]
\begin{ytableau}
\scriptstyle 1 & \scriptstyle 2 & \scriptstyle 3 & \scriptstyle \textcolor{red}{6}  \\
\scriptstyle \textcolor{red}{4} & \scriptstyle \textcolor{red}{5} & \scriptstyle \textcolor{violet}{10} \\
\scriptstyle \textcolor{blue}{7} & \scriptstyle \textcolor{blue}{8} \\
\scriptstyle \textcolor{violet}{9} & \scriptstyle \textcolor{teal}{11}
\end{ytableau}
\end{matrix}
\qquad
\begin{matrix}
\s_5 \\[3pt]
\begin{ytableau}
\scriptstyle 1 & \scriptstyle 2 & \scriptstyle 3 & \scriptstyle \textcolor{red}{6}  \\
\scriptstyle \textcolor{red}{4} & \scriptstyle \textcolor{red}{5} & \scriptstyle \textcolor{blue}{8} \\
\scriptstyle \textcolor{blue}{7} & \scriptstyle \textcolor{violet}{10} \\
\scriptstyle \textcolor{violet}{9} & \scriptstyle \textcolor{teal}{11}
\end{ytableau}
\end{matrix}
\]

The following standard tableaux on the other hand are not colour-semistandard:
\[
\begin{ytableau}
\scriptstyle 1 & \scriptstyle 2 & \scriptstyle 3 & \scriptstyle \textcolor{teal}{11}  \\
\scriptstyle \textcolor{red}{4} & \scriptstyle \textcolor{red}{5} & \scriptstyle \textcolor{blue}{7} \\
\scriptstyle \textcolor{red}{6} & \scriptstyle \textcolor{blue}{8} \\
\scriptstyle \textcolor{violet}{9} & \scriptstyle \textcolor{violet}{10}
\end{ytableau}
\qquad
\begin{ytableau}
\scriptstyle 1 & \scriptstyle 2 & \scriptstyle 3 & \scriptstyle \textcolor{violet}{9}  \\
\scriptstyle \textcolor{red}{4} & \scriptstyle \textcolor{blue}{7} & \scriptstyle \textcolor{blue}{8} \\
\scriptstyle \textcolor{red}{5} & \scriptstyle \textcolor{violet}{10} \\
\scriptstyle \textcolor{red}{6} & \scriptstyle \textcolor{teal}{11}
\end{ytableau}
\qquad
\begin{ytableau}
\scriptstyle 1 & \scriptstyle 2 & \scriptstyle 3 & \scriptstyle \textcolor{violet}{10}  \\
\scriptstyle \textcolor{red}{4} & \scriptstyle \textcolor{red}{5} & \scriptstyle \textcolor{red}{6} \\
\scriptstyle \textcolor{blue}{7} & \scriptstyle \textcolor{violet}{9} \\
\scriptstyle \textcolor{blue}{8} & \scriptstyle \textcolor{teal}{11}
\end{ytableau}
\]
\smallskip
\end{eg}

\begin{lem} \label{lem:easy-lem-for-semistd}
Let $\lambda = (\lambda_1,\dotsc, \lambda_r)$  and $\nu$ be partitions such that $[\lambda] \subseteq [\nu]$, and let $\s \in \STab(\nu)$ be colour-semistandard.
Then for each $i \in [1,r]$, any integer in the $i$-th row of $\s$ has colour $c_{j'}$ with $j' \geq i$.  Equivalently, all integers with colour $c_j$ for $j \in [1,i]$ appear in the first $i$ rows of $\s$.
\end{lem}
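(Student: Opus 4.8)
The plan is to proceed by induction on $i$, exploiting the column-strict increase of colours that defines colour-semistandardness, together with the combinatorial fact that the colours $c_1,\dots,c_i$ are precisely the colours that were assigned to the first $i$ rows of the initial tableau $\IT^{\nu_\lambda}$. The base case $i=1$ is the assertion that every integer in the first row of $\s$ has colour $c_{j'}$ with $j' \geq 1$, which is vacuously true since $c_1$ is the smallest colour. For the inductive step, suppose the claim holds for $i-1$, i.e.\ every entry in the first $i-1$ rows of $\s$ has colour $c_{j'}$ with $j' \geq i-1$; I want to upgrade this to: every entry in the first $i-1$ rows has colour $\geq i-1$ \emph{and} every entry in row $i$ has colour $\geq i$. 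Actually it is cleaner to phrase the induction hypothesis as the ``equivalent'' form in the statement: all integers with colour $c_j$, $j \in [1,i-1]$, appear in the first $i-1$ rows of $\s$.

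First I would count. The number of integers of colour $c_j$ equals the length of the $j$-th row of the composition $\nu_\lambda$; summing over $j \in [1,i]$, and using that $\nu_\lambda$ is the concatenation of $\lambda$ and $\nu - \lambda$, one checks that the total number of integers of colour $c_1,\dots,c_i$ is exactly $\nu_1 + \dots + \nu_i$ (the first $\lambda$-rows contribute $\lambda_1 + \dots + \lambda_i$ between them, split across colours, and the colours beyond $c_r$ sitting in rows $r+1, r+2, \dots$ of $\nu_\lambda$ account for the remaining boxes of $\nu$ in columns$\le \lambda$ — one must track this bookkeeping carefully, but it comes out to $|[\nu]| $ restricted to the first $i$ rows, namely $\nu_1 + \dots + \nu_i$, for each $i \le r$). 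Meanwhile the first $i$ rows of $\s$ contain exactly $\nu_1 + \dots + \nu_i$ boxes. So it suffices to show all integers of colour in $[1,i]$ appear \emph{within} the first $i$ rows of $\s$; a cardinality count then forces them to fill those rows exactly, and forces every entry in rows $1,\dots,i$ to have colour $\le i$ as well — but I only need the containment direction, which follows from column-strictness: if an integer $x$ of colour $c_j$ with $j \le i$ sat in row $i' > i$ of $\s$, then the entries of $\s$ directly above $x$ in its column, in rows $1,\dots,i'-1$ (there are $i'-1 \ge i$ of them), would have to carry strictly smaller colours, hence colours among $c_1,\dots,c_{j-1}$ with $j-1 \le i-1 < i' - 1$ — impossible, since there are only $j-1$ colours available below $c_j$ but $i'-1 \geq j$ cells to fill with distinct strictly-increasing colours. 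Wait: one needs $i'-1 \ge j$, i.e.\ $i' \ge j+1$; since $i' > i \ge j$ we have $i' \ge i+1 \ge j+1$, so this is fine.

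Actually the cleanest route avoids the explicit cardinality computation: I would simply argue directly by the pigeonhole/column-strictness argument of the previous paragraph. If $x$ has colour $c_j$ and lies in row $i'$ of $\s$, then the $i'-1$ entries strictly above $x$ in its column have strictly increasing, hence distinct, colours, all strictly less than $c_j$; there are only $j-1$ such colours, so $i' - 1 \le j - 1$, giving $i' \le j$. Taking the contrapositive: if $x$ lies in row $i$ of $\s$, then $i \le j$, i.e.\ $x$ has colour $c_{j}$ with $j \ge i$, which is exactly the first formulation. The equivalence with the second formulation (``all integers of colour $c_j$ for $j \in [1,i]$ appear in the first $i$ rows'') is then the observation that an integer of colour $c_j$ with $j \le i$ cannot lie in any row $i' > i \ge j$, again by $i' \le j$. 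The main — indeed only — obstacle is making sure the defining property of ``colour-semistandard'' is being used correctly, namely that it is the \emph{colours} of the entries (equivalently the row-indices in $\IT^{\nu_\lambda}$), not the entries themselves, that increase strictly down columns of $\s$; once that is pinned down, the argument is a one-line pigeonhole on column lengths versus available colours, and there is no genuine difficulty.
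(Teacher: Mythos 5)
Your final pigeonhole argument (an entry of colour $c_j$ in row $i'$ has $i'-1$ entries above it in its column carrying distinct, strictly smaller colours, so $i'-1\le j-1$) is correct and is exactly the formalization of the paper's one-line proof, which simply observes that the claim is clear from the strict increase of colours down columns. The preliminary detours through induction and cardinality counting are unnecessary, but the argument you settle on is the right one.
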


\begin{proof}
This is clear since the colours of the integers appearing in $\s$ are strictly increasing down each column.
\end{proof}

\begin{thm} \label{lem:semistandard}
Let $\lambda$  and $\nu$ be partitions such that $[\lambda] \subseteq [\nu]$.  For $\s,\t \in \STab(\nu)$, we have
\begin{enumerate}
\item $q_{\upI{\lambda}{\nu},\s} = q_{\upI{\lambda}{\nu},\t}$ if $\s$ and $\t$ have the same colour type;
\item $q_{\upI{\lambda}{\nu},\s} = 0$ if $\s$ is not colour-semistandard.
\end{enumerate}
\end{thm}

\begin{proof}
Since $f_{\upI{\lambda}{\nu}} = \sum_{\s \in \STab(\nu)} q_{\upI{\lambda}{\nu},\s} e_{\s}$, the lemma follows from Propositions \ref{prop:semistandard} and \ref{prop:seminormal}, as $\sym{\nu_{\lambda}}$ leaves the rows of $\upI{\lambda}{\nu}$ invariant, and $\s$ and $\t$ have the same colour type if and only if $\sigma \cdot \s = \t$ for some $\sigma \in \sym{\nu_{\lambda}}$.
\end{proof}

\section{Young's seminormal basis vector $f_{\upa{\lambda}{1}}$} \label{sec:mu=(1)}

In this section, we determine a closed formula of $f_{\upa{\lambda}{1}}$ when expressed in terms of the standard basis of $S^\QQ_{\lambda+(1)}$, where $\lambda$ is any partition.

Throughout this section, let $\lambda = (\lambda_1,\dotsc,\lambda_r) \vdash n$.  We first study the tableaux in $\SSTab(\lambda;1)$.
For each $\s \in \SSTab(\lambda;1)$, let
$$
Q(\s) = \{ i \in [2,r] \mid \text{the $i$-th row of $\s$ contains some integer with colour not equal to $c_i$} \}.
$$
Clearly, $Q(\s) = \varnothing$ if and only if $\s = \upa{\lambda}{1}$.

\begin{prop} \label{prop:semistd-colour-type}
If $\s \in \SSTab(\lambda;1)$ with $Q(\s) = \{ i_1 < i_2 < \dotsb  < i_s \} \ne \varnothing$, then the colour type of $\s$ is as follows:
\begin{enumerate}
\item its first row consists of all integers with colour $c_1$ together with one integer with colour $c_{i_1}$;
\item for $i \in [2,r] \setminus Q(\s)$, its $i$-th row consists only of all integers with colour $c_i$;
\item for $j \in [1,s-1]$, its $i_j$-th row consists of $\lambda_{i_j}-1$ integers with colour $c_{i_j}$, together with one integer with colour $c_{i_{j+1}}$;
\item its $i_s$-th row consists of $\lambda_{i_s}-1$ integers with colour $c_{i_s}$, together with one integer with colour $c_{r+1}$.
\end{enumerate}
\end{prop}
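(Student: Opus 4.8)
The plan is to run a box-counting argument powered by the single ``external'' colour. Since $\nu_\lambda = (\lambda_1,\dots,\lambda_r,1)$, there are exactly $r+1$ colours $c_1,\dots,c_{r+1}$; for $i\in[1,r]$ there are $\lambda_i$ integers of colour $c_i$, and there is exactly one integer of colour $c_{r+1}$, namely $n+1$. \emph{Step 1 (a distinguished box in each initial segment of rows).} For each $i\in[1,r]$ the first $i$ rows of $\s$ contain $1+\sum_{j=1}^i\lambda_j$ boxes, while by Lemma~\ref{lem:easy-lem-for-semistd} they contain every one of the $\sum_{j=1}^i\lambda_j$ integers of colours $c_1,\dots,c_i$; hence exactly one box among the first $i$ rows has colour $>c_i$. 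Call it $B_i$. Note $B_r$ is the box containing $n+1$, and $B_1$ lies in row $1$ (row $1$ has $\lambda_1+1$ boxes, contains all $\lambda_1$ integers of colour $c_1$, and by Lemma~\ref{lem:easy-lem-for-semistd} no integer of colour $<c_1$).

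\emph{Step 2 (the dichotomy at each row).} Fix $i\in[2,r]$. The colour of $B_{i-1}$ is $\ge c_i$ (it is $>c_{i-1}$). If it equals $c_i$, then no box of the first $i-1$ rows can have colour $>c_i$ (that would be a second box of colour $>c_{i-1}$ among the first $i-1$ rows, contradicting Step 1), so $B_i$ must lie in row $i$; since row $i$ has $\lambda_i$ boxes and, by Lemma~\ref{lem:easy-lem-for-semistd}, none of colour $<c_i$, it consists of $B_i$ (of colour $>c_i$) together with $\lambda_i-1$ integers of colour $c_i$, so $i\in Q(\s)$. If instead the colour of $B_{i-1}$ is $>c_i$, then $B_i=B_{i-1}$ lies in an earlier row, row $i$ has no box of colour $\ne c_i$ at all, and $i\notin Q(\s)$. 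Thus, for $i\in[2,r]$: $i\in Q(\s)$ iff the colour of $B_{i-1}$ is exactly $c_i$ iff $B_i$ lies in row $i$; and $i\notin Q(\s)$ iff $B_i=B_{i-1}$ iff row $i$ is monochromatic of colour $c_i$.

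\emph{Step 3 (threading the chain, and conclusion).} Set $i_0=1$ and $i_{s+1}=r+1$, so $Q(\s)\cup\{1\}=\{i_0<i_1<\dots<i_s\}$. I would prove by induction on $t\in[0,s]$ that $B_{i_t}=B_{i_t+1}=\dots=B_{i_{t+1}-1}$, that this common box lies in row $i_t$, and that its colour is $c_{i_{t+1}}$. Indeed, by Step 2 the dichotomy (for $t\ge 1$, applied at $i=i_t$ using that $B_{i_t-1}=B_{i_{t-1}}$ has colour $c_{i_t}$ by the inductive hypothesis) puts $B_{i_t}$ in row $i_t$; every index in $[i_t+1,i_{t+1}-1]$ lies outside $Q(\s)$, so Step 2 forces $B_i=B_{i-1}$ there and forces the colour of $B_{i_t}$ to exceed $c_i$ for each such $i$; finally the colour is pinned to $c_{i_{t+1}}$ because for $t<s$ the index $i_{t+1}\in Q(\s)$ forces the colour of $B_{i_{t+1}-1}=B_{i_t}$ to be exactly $c_{i_{t+1}}$ (Step 2 again), while for $t=s$ the colour exceeds $c_r$ and so must be $c_{r+1}=c_{i_{s+1}}$ (or directly $B_{i_s}=B_r$ is the box of $n+1$). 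Reading the colour-content of each row off Steps 2 and 3 now gives (1)--(4): row $1$ is $\lambda_1$ integers of colour $c_1$ together with $B_1$ of colour $c_{i_1}$; for $i\in[2,r]\setminus Q(\s)$ row $i$ is monochromatic of colour $c_i$; and for $1\le j\le s$, row $i_j$ is $\lambda_{i_j}-1$ integers of colour $c_{i_j}$ together with $B_{i_j}$, whose colour is $c_{i_{j+1}}$, which is $c_{r+1}$ when $j=s$. (When $Q(\s)=\varnothing$ the same reasoning, with $i_1=r+1$, recovers $\s=\upa{\lambda}{1}$.)

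The routine but delicate part is the bookkeeping in Step 3: one must check the boundary cases where two elements of $Q(\s)$ are adjacent (so an interval $[i_t+1,i_{t+1}-1]$ is empty and $B_{i_t}=B_{i_{t+1}-1}$ directly), where $i_s=r$ (so $B_{i_s}=B_r$ is read off directly), and where some $\lambda_{i_j}=1$ (so row $i_j$ carries none of its ``own'' colour and equals the single box $B_{i_j}$). None of these cause real trouble; the entire argument rests only on Lemma~\ref{lem:easy-lem-for-semistd} and the elementary count in Step 1.
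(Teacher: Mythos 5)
Your proof is correct and follows essentially the same route as the paper's: both arguments rest on Lemma \ref{lem:easy-lem-for-semistd} together with the count that the first $i$ rows contain exactly one more box than there are integers of colours $c_1,\dotsc,c_i$, and both then work down through the elements of $Q(\s)$ by induction. Your distinguished box $B_i$ is a repackaging of the single displaced integer that the paper tracks directly, so the two proofs coincide up to bookkeeping.
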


\begin{proof}
By definition of $Q(\s)$, for $i \notin Q(\s)$, the $i$-th row of $\s$ contains only integers with colour $c_i$, and hence all the integers with colour $c_i$, since the $i$-th row of $\s$ has $\lambda_i$ nodes and there are exactly $\lambda_i$ integers with colour $c_i$.

By Lemma \ref{lem:easy-lem-for-semistd}, the first row of $\s$ contains all the $\lambda_1$ integers with colour $c_1$, and another integer with colour $c_a$ say.  Since $Q(\s) \ne \emptyset$, we have $\s \ne \upa{\lambda}{1}$, so that $a \ne r+1$, as otherwise $\s = \upa{\lambda}{1}$ by Lemma \ref{lem:easy-lem-for-semistd}.
Thus $a \in [2,r]$, and since the remaining $\lambda_a-1$ integers with colour $c_a$ are insufficient to fill up the $a$-th row of $\s$ (which has $\lambda_a$ nodes), we have $a \in Q(\s)$.
Now for $i \in [2,a-1]$, the $i$-th row of $\s$ contains only all the integers with colour $c_i$ by Lemma \ref{lem:easy-lem-for-semistd}.  Consequently $a = \min(Q(\s)) = i_1$.

For $i_j \in Q(\s)$, we may assume that the first $i_j-1$ rows of $\s$ are as described, and so these rows contain exactly all the integers with colour $c_b$ for $b \in [1,i_j-1]$, and one other integer with colour $c_{i_j}$.
By Lemma \ref{lem:easy-lem-for-semistd}, the $i_j$-th row of $\s$ contains the remaining $\lambda_{i_j}-1$ integers with colour $c_{i_j}$, and another integer with colour $c_{b'}$ say, with $b' > i_j$,
and for all $i \in [i_j+1,b'-1]$, the $i$-th row of $\s$ contains only all the integers with colour $c_i$, and hence $i \notin Q(\s)$.
If $b' = r+1$, then this shows that $i_j = \max(Q(\s)) = i_s$.
On the other hand, if $b' \ne r+1$, then $b' \in Q(\s)$ since the remaining $\lambda_{b'}-1$ integers with colour $c_{b'}$ are insufficient to fill up the $b'$-th row of $\s$ (which has $\lambda_{b'}$ nodes).
Since $i \notin Q(\s)$ for all $i \in [i_j+1,b'-1]$, this yields $b'= i_{j+1}$ as desired, and our proof is complete.
\end{proof}

\begin{lem} \label{lem:Q}
Suppose that $\s\in\SSTab(\lambda;1)$. If $i \in Q(\s)$ and $\lambda_i = \lambda_{i+1}$, then $i+1 \in Q(\s)$.
\end{lem}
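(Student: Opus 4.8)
The plan is to argue by contradiction: I will assume $i+1 \notin Q(\s)$ and produce a violation of the assumption that $\s$ is column standard.  Put $a = \lambda_1 + \dotsb + \lambda_i$, so that the integers of colour $c_{i+1}$ are exactly those in $[a+1,\, a+\lambda_{i+1}]$, while every integer of colour $c_j$ with $j \geq i+2$ exceeds $a + \lambda_{i+1}$; note also that $\lambda_i = \lambda_{i+1}$ forces $i \leq r-1$ (since $\lambda_r > 0$), so that such colours $c_j$ with $j \geq i+2$ do exist and $i+1 \in [2,r]$.  The first step is to pin down the $(i+1)$-th row of $\s$: since $i+1 \notin Q(\s)$, it contains only integers of colour $c_{i+1}$, and as it has $\lambda_{i+1}$ nodes while there are exactly $\lambda_{i+1}$ integers of that colour, it contains all of them; being row standard, it must read $a+1, \dotsc, a+\lambda_{i+1}$ from left to right, i.e.\ $\s(i+1,k) = a+k$ for every $k \in [1,\lambda_{i+1}]$.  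In particular, no integer of colour $c_{i+1}$ lies outside the $(i+1)$-th row of $\s$.

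The second step uses $i \in Q(\s)$ to locate an offending entry $b = \s(i,k)$ of colour $c_{i'}$ with $i' \neq i$.  By Lemma \ref{lem:easy-lem-for-semistd}, every entry of the $i$-th row of $\s$ has colour $c_{j'}$ with $j' \geq i$, so $i' > i$; and by the last sentence of the previous paragraph, $i' \neq i+1$, hence $i' \geq i+2$.  Therefore $b > a + \lambda_{i+1}$ by the choice of $a$.

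The final step is then immediate: since $k \in [1,\lambda_i] = [1,\lambda_{i+1}]$, the entry $\s(i+1,k)$ exists and satisfies $\s(i+1,k) = a+k \leq a + \lambda_{i+1} < b = \s(i,k)$, contradicting the fact that $\s$ is column standard.  Hence $i+1 \in Q(\s)$.  I do not expect any real difficulty here; the one point needing care is upgrading the colour of the offending entry in row $i$ from ``$\geq c_{i+1}$'' to ``$\geq c_{i+2}$'', and that is precisely where the assumption $i+1 \notin Q(\s)$ is used, via the fact that all colour-$c_{i+1}$ integers are confined to the $(i+1)$-th row.  Alternatively one could read this off directly from Proposition \ref{prop:semistd-colour-type}, which identifies the unique foreign colour in the $i$-th row of $\s$ as $c_{i_{j+1}}$ or $c_{r+1}$; but the argument above does not require that structural description.
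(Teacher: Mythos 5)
Your proof is correct and follows essentially the same route as the paper's: both arguments show that if $i+1 \notin Q(\s)$ then row $i+1$ consists of all the colour-$c_{i+1}$ integers, deduce that the foreign entry in row $i$ must have colour $c_{i'}$ with $i' \geq i+2$ and hence exceed every entry of row $i+1$, and then use $\lambda_i = \lambda_{i+1}$ to place it directly above a smaller entry, contradicting column standardness. The only cosmetic difference is that the paper cites Proposition \ref{prop:semistd-colour-type} for the structural facts you re-derive from the definition of $Q(\s)$ and Lemma \ref{lem:easy-lem-for-semistd}.
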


\begin{proof}
If $i \in Q(\s)$ and $\lambda_i = \lambda_{i+1}$, then the $i$-th row of $\s$ contains some integer with colour $c_j$, with $j > i$ by Proposition \ref{prop:semistd-colour-type}.
If $i+1 \notin Q(\s)$ then the $(i+1)$-th row of $\s$ contains all the integers with colour $c_{i+1}$ by Proposition \ref{prop:semistd-colour-type}, so that $j\ne i+1$ and hence $j > i+1$.
But then the integer with colour $c_j$ in the $i$-th row of $\s$ is now above an integer with colour $c_{i+1}$ in the $(i+1)$-th row, contradicting $\s$ being standard.
\end{proof}

Lemma \ref{lem:Q} suggests that we can reduce $Q(\s)$ to a subset $P(\s)$ from which we can reconstruct $Q(\s)$, as follows.
For each $a \in \ZZ^+$, let $Q(\s)_a = \{ i \in Q(\s) \mid \lambda_i = a \}$.  Then $Q(\s) = \bigcup_{a\in \ZZ^+} Q(\s)_a$ (disjoint union).  Define
$$
P(\s) := \{ \min (Q(\s)_a) \mid a \in \ZZ^+,\, Q(\s)_a \ne \varnothing \}.
$$
Then for each $i \in P(\s)$, $Q(\s)_{\lambda_i} = \{ j \in [i,r] \mid \lambda_j = \lambda_i \}$ by Lemma \ref{lem:Q}, and $Q(\s) = \bigcup_{i \in P(\s)} Q(\s)_{\lambda_i}$.

\begin{eg}\label{Eg: colour-ss2} Continuing with our running example, Example \ref{Eg: colour-ss}, we have the following table for the representatives $\s_i$ ($i \in [0,5]$) in $\SSTab((3,3,2,2);1)$ with distinct colour types.

\[
\begin{array}{c|cccccc}
\s &
\ytableausetup{notabloids}
\begin{matrix}
\s_0 \\[3pt]
\begin{ytableau}
\scriptstyle 1 & \scriptstyle 2 & \scriptstyle 3 & \scriptstyle \textcolor{teal}{11}  \\
\scriptstyle \textcolor{red}{4} & \scriptstyle \textcolor{red}{5} & \scriptstyle \textcolor{red}{6} \\
\scriptstyle \textcolor{blue}{7} & \scriptstyle \textcolor{blue}{8} \\
\scriptstyle \textcolor{violet}{9} & \scriptstyle \textcolor{violet}{10}
\end{ytableau}
\end{matrix}
&
\begin{matrix}
\s_1 \\[3pt]
\begin{ytableau}
\scriptstyle 1 & \scriptstyle 2 & \scriptstyle 3 & \scriptstyle \textcolor{red}{6}  \\
\scriptstyle \textcolor{red}{4} & \scriptstyle \textcolor{red}{5} & \scriptstyle \textcolor{teal}{11} \\
\scriptstyle \textcolor{blue}{7} & \scriptstyle \textcolor{blue}{8} \\
\scriptstyle \textcolor{violet}{9} & \scriptstyle \textcolor{violet}{10}
\end{ytableau}
\end{matrix}
&
\begin{matrix}
\s_2 \\[3pt]
\begin{ytableau}
\scriptstyle 1 & \scriptstyle 2 & \scriptstyle 3 & \scriptstyle \textcolor{violet}{10}  \\
\scriptstyle \textcolor{red}{4} & \scriptstyle \textcolor{red}{5} & \scriptstyle \textcolor{red}{6} \\
\scriptstyle \textcolor{blue}{7} & \scriptstyle \textcolor{blue}{8} \\
\scriptstyle \textcolor{violet}{9} & \scriptstyle \textcolor{teal}{11}
\end{ytableau}
\end{matrix}
&
\begin{matrix}
\s_3 \\[3pt]
\begin{ytableau}
\scriptstyle 1 & \scriptstyle 2 & \scriptstyle 3 & \scriptstyle \textcolor{blue}{8}  \\
\scriptstyle \textcolor{red}{4} & \scriptstyle \textcolor{red}{5} & \scriptstyle \textcolor{red}{6} \\
\scriptstyle \textcolor{blue}{7} & \scriptstyle \textcolor{violet}{10} \\
\scriptstyle \textcolor{violet}{9} & \scriptstyle \textcolor{teal}{11}
\end{ytableau}
\end{matrix}
&
\begin{matrix}
\s_4 \\[3pt]
\begin{ytableau}
\scriptstyle 1 & \scriptstyle 2 & \scriptstyle 3 & \scriptstyle \textcolor{red}{6}  \\
\scriptstyle \textcolor{red}{4} & \scriptstyle \textcolor{red}{5} & \scriptstyle \textcolor{violet}{10} \\
\scriptstyle \textcolor{blue}{7} & \scriptstyle \textcolor{blue}{8} \\
\scriptstyle \textcolor{violet}{9} & \scriptstyle \textcolor{teal}{11}
\end{ytableau}
\end{matrix}
&
\begin{matrix}
\s_5 \\[3pt]
\begin{ytableau}
\scriptstyle 1 & \scriptstyle 2 & \scriptstyle 3 & \scriptstyle \textcolor{red}{6}  \\
\scriptstyle \textcolor{red}{4} & \scriptstyle \textcolor{red}{5} & \scriptstyle \textcolor{blue}{8} \\
\scriptstyle \textcolor{blue}{7} & \scriptstyle \textcolor{violet}{10} \\
\scriptstyle \textcolor{violet}{9} & \scriptstyle \textcolor{teal}{11}
\end{ytableau}
\end{matrix}
\\ \\
Q(\s) & \varnothing & \{ 2\} & \{4\} & \{3,4\} & \{ 2,4 \} & \{ 2,3,4 \} \\ \\
P(\s) & \varnothing & \{ 2\} & \{4\} & \{3\}   & \{2,4\} & \{ 2,3 \}
\end{array}\]
\smallskip
\end{eg}

\begin{lem} \label{lem:same-colour-type}
Let $\s,\t \in \SSTab(\lambda;1)$. Then $\s$ and $\t$ have the same colour type if and only if $Q(\s)=Q(\t)$, if and only if $P(\s) = P(\t)$.
\end{lem}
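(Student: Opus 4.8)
The plan is to run a short cycle of implications among the three conditions, relying on the two structural results already in hand. Write (A) for ``$\s$ and $\t$ have the same colour type'', (B) for ``$Q(\s) = Q(\t)$'', and (C) for ``$P(\s) = P(\t)$''. I would establish (A)$\Rightarrow$(B), (B)$\Rightarrow$(A), (B)$\Rightarrow$(C) and (C)$\Rightarrow$(B); taken together these yield (A)$\Leftrightarrow$(B)$\Leftrightarrow$(C).

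The implications (A)$\Rightarrow$(B) and (B)$\Rightarrow$(C) are immediate from the definitions: $Q(\s)$ is read off from which colours occur in each row of $\s$, and $P(\s)$ is defined outright from $Q(\s)$ via $P(\s) = \{ \min Q(\s)_a \mid a \in \ZZ^+,\ Q(\s)_a \neq \varnothing \}$. For (B)$\Rightarrow$(A): if the common set $Q(\s) = Q(\t)$ is empty then $\s = \t = \upa{\lambda}{1}$, as observed just before Proposition \ref{prop:semistd-colour-type}; otherwise Proposition \ref{prop:semistd-colour-type} specifies, row by row and purely in terms of the $Q$-set, which colours occur in each row and with what multiplicity, and since $\s$ and $\t$ are row-standard with the colour classes being intervals of consecutive integers, the multiset of colours in each row in fact determines the colour type. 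Hence $Q(\s) = Q(\t)$ forces the colour types of $\s$ and $\t$ to agree.

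The one implication carrying any content is (C)$\Rightarrow$(B), for which I would invoke Lemma \ref{lem:Q}: as recorded in the discussion following that lemma, for each $i \in P(\s)$ one has $Q(\s)_{\lambda_i} = \{ j \in [i,r] \mid \lambda_j = \lambda_i \}$, so that
$$
Q(\s) = \bigcup_{i \in P(\s)} Q(\s)_{\lambda_i} = \bigcup_{i \in P(\s)} \{ j \in [i,r] \mid \lambda_j = \lambda_i \},
$$
which is determined by $P(\s)$ and $\lambda$ alone; thus $P(\s) = P(\t)$ gives $Q(\s) = Q(\t)$ and closes the cycle. I do not anticipate any real obstacle here: the whole argument is bookkeeping on top of Proposition \ref{prop:semistd-colour-type} and Lemma \ref{lem:Q}, the only point deserving a moment's care being the degenerate case $Q(\s) = \varnothing$ (equivalently $P(\s) = \varnothing$), where $\s$ is pinned down to equal $\upa{\lambda}{1}$.
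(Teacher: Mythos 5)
Your proposal is correct and follows essentially the same route as the paper: the equivalence of colour type with $Q(\s)$ comes from Proposition \ref{prop:semistd-colour-type} (plus the degenerate case $Q(\s)=\varnothing$), and the equivalence of $Q(\s)$ with $P(\s)$ comes from the fact that each is reconstructible from the other using only $\lambda$, via Lemma \ref{lem:Q}. The extra remark that the multiset of colours in each row determines the position-wise colour type (by row-standardness and the colours being ordered intervals) is a correct and worthwhile detail that the paper leaves implicit.
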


\begin{proof}
By Proposition \ref{prop:semistd-colour-type}, $\s$ and $\t$ have the same colour type if and only if $Q(\s) = Q(\t)$.  The construction of $P(\s)$ from $Q(\s)$, and that of $Q(\s)$ from $P(\s)$, uses only information about $\lambda$ and not about $\s$.  Thus $Q(\s) = Q(\t)$ if and only if $P(\s) = P(\t)$.
\end{proof}

We can now state the main result of this section.

\begin{thm} \label{thm:mu=(1)}
Let $\lambda = (\lambda_1,\dotsc, \lambda_r) \vdash n$.  For each $\s \in \SSTab(\lambda;1)$, define
$$a_{\s} = (-1)^{|Q(\s)| - |P(\s)|} \prod_{i \in P(\s)} \frac{1}{\lambda_1 - \lambda_i + \max(Q(\s)_{\lambda_i})}.$$
Then
$$
f_{\upa{\lambda}{1}} = \sum_{\s \in \SSTab(\lambda;1)} a_{\s} e_{\s}.$$
In other words, $q_{\upa{\lambda}{1},\s} = a_{\s}$ if $\s \in \SSTab(\lambda;1)$ and $0$ otherwise.
\end{thm}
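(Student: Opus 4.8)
# Proof Proposal for Theorem \ref{thm:mu=(1)}

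The plan is to verify that the vector $g := \sum_{\s \in \SSTab(\lambda;1)} a_{\s} e_{\s}$ satisfies the defining properties of $f_{\upa{\lambda}{1}}$ given in Proposition \ref{prop:seminormal}, transported through the isomorphism $S^{\QQ}_{\lambda+(1)} \cong \QQ\sym{n+1}f_{\IT^{\lambda+(1)}}$. Since the seminormal basis vector $f_{\upa{\lambda}{1}}$ is uniquely characterised (by Lemma \ref{lem:f_in_terms_of_e} and Proposition \ref{prop:semistandard}/\ref{prop:seminormal}) as the unique element of the form $e_{\upa{\lambda}{1}} + \sum_{\s \rhd \upa{\lambda}{1}} c_\s e_\s$ that is fixed by $\sym{\nu_{\lambda}}$ (where $\nu = \lambda+(1)$) and annihilated by the appropriate seminormal lowering relations — or, more robustly, by the fact that $\QQ\sym{n+1}f_{\IT^{\lambda+(1)}}$ is irreducible so $f_{\upa{\lambda}{1}}$ is determined by knowing $D(\upa{\lambda}{1})$ acts correctly — I would instead proceed by the following more hands-on route. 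Set up the chain of standard tableaux from $\IT^{\lambda+(1)}$ down to $\upa{\lambda}{1}$: writing $\upa{\lambda}{1} = w \cdot \IT^{\lambda+(1)}$ for a specific $w$ with a reduced expression $w = \bs_{i_\ell} \dotsm \bs_{i_1}$ passing only through standard tableaux (Lemma \ref{lem:std}(1)), one builds $f_{\upa{\lambda}{1}}$ by repeatedly applying the Recursion of Proposition \ref{prop:seminormal}. The leading term $e_{\upa{\lambda}{1}}$ has coefficient $a_{\upa{\lambda}{1}} = 1$ since $Q(\upa{\lambda}{1}) = P(\upa{\lambda}{1}) = \varnothing$, matching the initial condition.

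The cleaner organisational approach I would actually carry out: induct on $r$ and use the reduction in Corollary \ref{cor:same-relative-positions}, or more directly, realise $\upa{\lambda}{1}$ as obtained from $\IT^{\lambda+(1)}$ by sliding the entry $n+1$ from position $(1,\lambda_1+1)$ successively down through the rows. Concretely, the box containing $n+1$ moves from row $1$ to row $2$ (swapping it past the last entry of row $1$ and into row $2$), then possibly down further; at each stage the residue differences $r_{\s,i}$ that appear are exactly the hook-length-type quantities $\lambda_1 - \lambda_i + (\text{something})$ appearing in the statement. I would compute, step by step, how the seminormal recursion $f_{\bs_i \cdot \s} = -\tfrac{1}{r_{\s,i}} f_\s + \bs_i f_\s$ spawns new standard polytabloids: the term $\bs_i f_\s$ acts on each $e_\t$ in the current expansion, and one must straighten $\bs_i e_\t = e_{\bs_i \cdot \t}$ when $\bs_i \cdot \t$ is standard, or use Proposition \ref{prop:Garnir}(1) when it is not (i.e.\ when $i, i+1$ lie in the same column). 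The key combinatorial bookkeeping is that the tableaux $\s$ with $q_{\upa{\lambda}{1},\s} \ne 0$ are precisely those in $\SSTab(\lambda;1)$ (already guaranteed by Lemma \ref{lem:semistandard}), and that these are indexed, via Lemma \ref{lem:same-colour-type} and the analysis in Proposition \ref{prop:semistd-colour-type}, by the subsets $P(\s) \subseteq [2,r]$ subject to the constraint that $P(\s)$ contains at most one index from each block of equal parts of $\lambda$.

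The heart of the proof is an inductive identity on the coefficients. I would fix the colour type, i.e.\ fix $P = P(\s)$ with associated $Q = Q(\s)$, and show that the common value $q_{\upa{\lambda}{1},\s}$ (common by Lemma \ref{lem:semistandard}(1)) equals $a_\s$. Track the "path" by which a tableau of colour type $P$ first appears: the entry $n+1$ (colour $c_{r+1}$) together with the displaced entries of colours $c_{i_1}, \dotsc, c_{i_s}$ form a cascade, and each index $i_j \in Q$ that is the minimum of its equal-parts block contributes a factor $\tfrac{1}{\lambda_1 - \lambda_{i_j} + \max(Q_{\lambda_{i_j}})}$ coming from the residue difference $r_{\s,i}$ at the moment the cascade passes through rows $i_j$ down to $\max(Q_{\lambda_{i_j}})$; the remaining indices in $Q \setminus P$ contribute only signs (from the $-\tfrac{1}{r_{\s,i}}$ with $r_{\s,i} = \pm 1$, or from Garnir straightening of column-adjacent pairs inside an equal-parts block), accounting for the $(-1)^{|Q|-|P|}$. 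The main obstacle will be precisely this last point: carefully controlling the Garnir/straightening contributions when a block of equal parts $\lambda_i = \lambda_{i+1} = \dotsb$ is involved — here $\bs_i \cdot \t$ is non-standard (column collision) and one must invoke Proposition \ref{prop:Garnir}(1) and verify that, after straightening, the net effect on coefficients of the relevant $e_\s$'s is exactly the sign $(-1)^{|Q|-|P|}$ with no spurious denominator. I expect one needs Lemma \ref{lem:Q} crucially here to know that whenever a collision would occur the whole sub-block is forced into $Q$, so the cascade behaves uniformly across the block. Once this sign bookkeeping is pinned down, the denominator factors follow by a clean telescoping of the residue differences along the cascade, and comparing with the product formula for $a_\s$ finishes the proof; the membership/vanishing claim $q_{\upa{\lambda}{1},\s}=0$ for $\s \notin \SSTab(\lambda;1)$ is already Lemma \ref{lem:semistandard}(2).
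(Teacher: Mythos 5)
Your high-level strategy is essentially the paper's: use the seminormal recursion of Proposition \ref{prop:seminormal} together with Garnir straightening (Proposition \ref{prop:Garnir}(1)) and the colour-type invariance of Lemma \ref{lem:semistandard}, with Lemma \ref{lem:Q} controlling the equal-parts blocks. The paper organises this as an induction on $n$: it passes from $\tilde{\lambda}=(\lambda_1,\dotsc,\lambda_{r-1},\lambda_r-1)$ to $\lambda$ via Corollary \ref{cor:same-relative-positions}(2), so that only a \emph{single} application of $\bs_n+\tfrac{1}{\lambda_1-\lambda_r+r}$ is needed at each step, rather than your full chain of basic transpositions from $\IT^{\lambda+(1)}$ down to $\upa{\lambda}{1}$. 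That choice matters: it localises the entire difficulty to one transposition and one Garnir relation per inductive step.

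The genuine gap is that you have only named, not carried out, the step that constitutes the actual content of the theorem. You write that the indices in $Q\setminus P$ ``contribute only signs'' and that ``once this sign bookkeeping is pinned down'' the result follows --- but pinning it down is the whole proof. Concretely, when $\lambda_{r-1}=\lambda_r$ the tableaux $\tilde{\s}$ with $n$ in column $\lambda_r$ of row $r-1$ give a non-standard $\bs_n\cdot(\upt{\tilde{\s}}{\lambda+(1)})$ whose Garnir expansion produces $\lambda_r-1$ extra standard polytabloids $e_{\uu_{\tilde{\s},i}}$; these must be matched against, and combined with, the contributions $(1+\tfrac{1}{\lambda_1-\lambda_r+r})a_{\tilde{\s}}$ coming from tableaux with $n$ already in row $r$ and $Q(\tilde{\s})_{\lambda_r}\neq\varnothing$. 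Verifying that the net coefficient of each $e_{\s}$ is exactly $a_{\s}$ requires the explicit identities relating $a_{\upt{\tilde{\s}}{\lambda+(1)}}$ and $a_{\bs_n\cdot(\upt{\tilde{\s}}{\lambda+(1)})}$ to $a_{\tilde{\s}}$ (the paper's Table 1, e.g.\ the factor $-\tfrac{(\lambda_1-\lambda_r+r+1)(\lambda_1-\lambda_r+r-1)}{\lambda_1-\lambda_r+r}$ when $P$ loses the index $r$, versus $\tfrac{\lambda_1-\lambda_r+r+1}{\lambda_1-\lambda_r+r}$ when it does not), together with a bijection between the Garnir terms $\uu_{\tilde{\s},i}$ and the type-2 tableaux they must cancel against. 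None of this is present in your proposal, and it does not follow formally from Lemma \ref{lem:Q} or from ``telescoping of residue differences''; it is a computation that must be done, and until it is, the claimed formula for $a_{\s}$ is unverified.
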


We remark that $Q(\s)$, and hence $P(\s)$ and $a_{\s}$, clearly depends only on the colour type of $\s$, so that Theorem \ref{thm:mu=(1)} is in agreement with Theorem \ref{lem:semistandard}.

\begin{eg} \label{E:mu=(1)}
Continuing with our running example (Examples \ref{Eg: colour-ss} and \ref{Eg: colour-ss2}), we have the corresponding $a_{\s}$ for each of the colour types of $\SSTab((3,3,2,2);1)$ as follows:
\[\begin{array}{c|cccccc}
\s&\s_0&\s_1&\s_2&\s_3 & \s_4 & \s_5 \\ \\
a_\s & 1 & \frac{1}{2} & \frac{1}{2} & -\frac{1}{2} & \frac{1}{10} & -\frac{1}{10}
\end{array}\]
By Theorem \ref{thm:mu=(1)}, $f_{\upa{(3,3,2,2)}{1}} = \sum_{\s \in \STab((4,3,2,2))} q_{\upa{(3,3,2,2)}{1}, \s} e_{\s}$ where $q_{\upa{(3,3,2,2)}{1}, \s} = a_{\s_i}$ if $\s$ has the same colour type as $\s_i$ ($i \in [0,5]$), and $0$ otherwise.
\end{eg}

\begin{proof}[Proof of Theorem \ref{thm:mu=(1)}]
Note first that if $r = 1$, then $\upa{\lambda}{1} = \IT^{\lambda+(1)}$, $\SSTab(\lambda;1) = \{ \IT^{\lambda+(1)} \}$, $P(\IT^{\lambda+(1)}) = \varnothing = Q(\IT^{\lambda+(1)})$ and so the theorem holds trivially.

We prove by induction on $n$, with the base case of $n=1$ already dealt with above, since $r=1$ in this case. For the inductive step, again we only need to consider the remaining case of $r>1$.
Let $\tilde{\lambda} = (\lambda_1,\dotsc,\lambda_{r-1}, \lambda_r -1)$.
Then $\tilde{\lambda} \vdash (n-1)$.
By induction, $f_{\upa{\tilde{\lambda}}{1}} = \sum_{\tilde{\s} \in \SSTab(\tilde{\lambda};1)} a_{\tilde{\s}} e_{\tilde{\s}}$, so that $\DD{\upa{\tilde{\lambda}}{1}} = \sum_{\tilde{\s} \in \SSTab(\tilde{\lambda};1)} a_{\tilde{\s}} d(\tilde{\s})$.
Thus, by Corollary \ref{cor:same-relative-positions}(2) (with $m=1$), we have
\begin{align*}
f_{\upI{\tilde{\lambda}}{\lambda+(1)}}
&= \DD{\upa{\tilde{\lambda}}{1}} e_{\IT^{\lambda + (1)}}
= \sum_{\tilde{\s} \in \SSTab(\tilde{\lambda};1)} a_{\tilde{\s}} d(\tilde{\s}) e_{\IT^{\lambda + (1)}} \\
&= \sum_{\tilde{\s} \in \SSTab(\tilde{\lambda};1)} a_{\tilde{\s}} e_{d(\tilde{\s}) \cdot \IT^{\lambda + (1)}}
= \sum_{\tilde{\s} \in \SSTab(\tilde{\lambda};1)} a_{\tilde{\s}} e_{\upt{\tilde{\s}}{\lambda+(1)}}.
\end{align*}
Thus,
\begin{align} \label{E:(1)-case}
f_{\upa{\lambda}{1}} = f_{\bs_{n} \cdot (\upI{\tilde{\lambda}}{\lambda+(1)})}
&= (\bs_{n} + \tfrac{1}{\lambda_1 - \lambda_r + r}) f_{\upI{\tilde{\lambda}}{\lambda+(1)}}
= (\bs_{n} + \tfrac{1}{\lambda_1 - \lambda_r + r}) \sum_{\tilde{\s} \in \SSTab(\tilde{\lambda};1)} a_{\tilde{\s}} e_{\upt{\tilde{\s}}{\lambda+(1)}}
\end{align}
by Proposition \ref{prop:seminormal}.

To continue, we split the tableaux in $\SSTab(\tilde{\lambda};1)$ into three types:
$\tilde{\s} \in \SSTab(\tilde{\lambda},1)$ is of type 1 if $n$ lies neither in the $r$-th row nor in the $\lambda_r$-th column, type $2$ if $n$ lies in the $r$-th row and type $3$ if $n$ lies in the $\lambda_r$-th column (in which case $n$ lies in the $(r-1)$-th row; this is only possible when $\lambda_{r-1} = \lambda_r$).
Similarly, we split the tableaux in $\SSTab(\lambda;1)$ into two types: $\s \in \SSTab(\lambda;1)$ is of type 1 if $n+1$ lies in the $r$-th row, and type $2$ if $n+1$ lies above the $r$-th row.
The subset of $\SSTab(\tilde{\lambda};1)$ (respectively $\SSTab(\lambda;1))$ consisting of tableaux of type $i$ shall be denoted $\SSTab(\tilde{\lambda};1)_i$ (respectively $\SSTab(\lambda;1)_i$).

We have a bijection $\SSTab(\tilde{\lambda};1) \to \SSTab(\lambda;1)_1$ defined by $\tilde{\s} \mapsto \upt{\tilde{\s}}{\lambda+(1)}$, with inverse $\s \mapsto \s{\downarrow_{n}}$.
We also have a bijection $\SSTab(\tilde{\lambda};1)_1 \to \SSTab(\lambda;1)_2$ defined by $\tilde{\s} \mapsto \bs_n \cdot (\upt{\tilde{\s}}{\lambda+(1)})$, with inverse $\s \mapsto (\bs_n \cdot \s) {\downarrow_n}$ (note that $n$ lies in the $r$-th row of $\s \in \SSTab(\lambda;1)_2$ by Proposition \ref{prop:semistd-colour-type}).
Also if $\tilde{\s} \in \SSTab(\tilde{\lambda};1)_2$ then $\overline{\bs_n \cdot (\upt{\tilde{\s}}{\lambda+(1)})} = \upt{\tilde{\s}}{\lambda+(1)}$.  We defer for the moment the discussion of $\bs_n \cdot (\upt{\tilde{\s}}{\lambda+(1)})$ for $\tilde{\s} \in \SSTab(\tilde{\lambda};1)_3$.

The following table gives a summary of the important information obtained from Proposition \ref{prop:semistd-colour-type} pertaining to $\upt{\tilde{\s}}{\lambda+(1)}$ and $\bs_n \cdot (\upt{\tilde{\s}}{\lambda+(1)})$ in terms of that pertaining to $\tilde{\s}$.
{\scriptsize
\begin{table}[h]
\begin{tabular}{cc|ccccc}
type of $\tilde{\s}$ & $\t$ &
$r \in Q(\tilde{\s})$? &$Q(\t)$ & $P(\t)$ & $a_{\t}$ & remarks\\[6pt]
\hline
1 & $\upt{\tilde{\s}}{\lambda+(1)}$ &
No & $Q(\tilde{\s}) \cup \{r\}$ & $P(\tilde{\s}) \cup \{r\}$ & $\tfrac{1}{\lambda_1 - \lambda_r + r}a_{\tilde{\s}}$ \\[8pt]
2 & $\upt{\tilde{\s}}{\lambda+(1)}$ &
Yes &$Q(\tilde{\s})$ & $\left\{\begin{matrix} P(\tilde{\s}) \hfill \\[4pt] P(\tilde{\s}) \setminus \{ r\} \end{matrix}\right.$ &
$\left\{\begin{matrix} \tfrac{\lambda_1-\lambda_r+r+1}{\lambda_1 - \lambda_r + r}a_{\tilde{\s}} \\[3pt] -\tfrac{(\lambda_1-\lambda_r+r+1)(\lambda_1-\lambda_r+r-1)}{\lambda_1-\lambda_r+r}a_{\tilde{\s}} \end{matrix}\right.$ &
$\left\{\begin{matrix} \text{if } Q(\tilde{\s})_{\lambda_r} = \varnothing \\[7pt] \text{if } Q(\tilde{\s})_{\lambda_r} \ne \varnothing \end{matrix}\right.$
\\[14pt]
3 & $\upt{\tilde{\s}}{\lambda+(1)}$ &
No & $Q(\tilde{\s}) \cup \{r\}$ & $P(\tilde{\s})$ & $-\tfrac{\lambda_1 - \lambda_r + r-1}{\lambda_1 - \lambda_r + r}a_{\tilde{\s}}$ \\[8pt]
1 & $\bs_n \cdot (\upt{\tilde{\s}}{\lambda+(1)})$ &
No & $Q(\tilde{\s})$ & $P(\tilde{\s})$ & $a_{\tilde{\s}}$ \\
\hline \\
\end{tabular}
\caption{Summary Table}
\end{table}
}

We provide a sketch to justify the second row of the table, which is the most difficult, and leave the other rows to the reader as easy exercises.
Suppose that $\tilde{\s} \in \SSTab(\tilde{\lambda};1)_2$.
The only integer that changes colour when going from $\tilde{\s}$ to $\upt{\tilde{\s}}{\lambda+(1)}$ is $n$, whose colour changes from $c_{r+1}$ to $c_r$.
Thus by Proposition \ref{prop:semistd-colour-type}, $r \in Q(\tilde{\s})_{\lambda_r-1} \cap Q(\upt{\tilde{\s}}{\lambda+(1)})_{\lambda_r}$, and $\varnothing = Q(\upt{\tilde{\s}}{\lambda+(1)})_{\lambda_r-1} = Q(\tilde{\s})_{\lambda_r-1} \setminus \{r\}$ while $Q(\upt{\tilde{\s}}{\lambda+(1)})_{\lambda_r} \setminus \{r\} = Q(\tilde{\s})_{\lambda_r}$, and $Q(\upt{\tilde{\s}}{\lambda+(1)})_{b} = Q(\tilde{\s})_{b}$ for all $b \ne \lambda_r,\lambda_r-1$.  This yields $Q(\upt{\tilde{\s}}{\lambda+(1)})$ and $P(\upt{\tilde{\s}}{\lambda+(1)})$.  For $a_{\upt{\tilde{\s}}{\lambda+(1)}}$, we need only to note in addition that $\max(Q(\upt{\tilde{\s}}{\lambda+(1)})_{\lambda_r}) = r$, while $\max(Q(\tilde{\s})_{\lambda_r}) = r-1$ if $Q(\tilde{\s})_{\lambda_r} \ne \varnothing$.

With Table 1, we can now proceed to finish the proof, dealing with the cases $\lambda_{r-1} \ne \lambda_{r}$ and $\lambda_{r-1} = \lambda_r$ separately.
\begin{description}
\item[Case 1. $\lambda_{r-1} \ne \lambda_r$]  In this case, we have $\SSTab(\tilde{\lambda};1) = \bigcup_{i=1}^2 \SSTab(\tilde{\lambda};1)_i$ and $Q(\tilde{\s})_{\lambda_r} = \varnothing$ for all $\tilde{\s} \in \SSTab(\tilde{\lambda},1)_2$.  Continuing from \eqref{E:(1)-case}, and using the above summary table whenever necessary, we get
    \begin{align*}
    &f_{\upa{\lambda}{1}} \\
    &= \sum_{\tilde{\s} \in \SSTab(\tilde{\lambda};1)_1} (a_{\tilde{\s}} e_{\bs_n \cdot (\upt{\tilde{\s}}{\lambda+(1)})} + \tfrac{a_{\tilde{\s}}}{\lambda_1-\lambda_r+r} e_{\upt{\tilde{\s}}{\lambda+(1)}} )
    + \sum_{\tilde{\s} \in \SSTab(\tilde{\lambda};1)_2} (1+ \tfrac{1}{\lambda_1 - \lambda_r + r}) a_{\tilde{\s}} e_{\upt{\tilde{\s}}{\lambda+(1)}} \\
    &= \sum_{\tilde{\s} \in \SSTab(\tilde{\lambda};1)_1} (a_{\bs_n \cdot (\upt{\tilde{\s}}{\lambda+(1)})} e_{\bs_n \cdot (\upt{\tilde{\s}}{\lambda+(1)})} + a_{\upt{\tilde{\s}}{\lambda+(1)}} e_{\upt{\tilde{\s}}{\lambda+(1)}} )
    + \sum_{\tilde{\s} \in \SSTab(\tilde{\lambda};1)_2} a_{\upt{\tilde{\s}}{\lambda+(1)}} e_{\upt{\tilde{\s}}{\lambda+(1)}} \\
    &= \sum_{\tilde{\s} \in \SSTab(\tilde{\lambda};1)_1} a_{\bs_n \cdot (\upt{\tilde{\s}}{\lambda+(1)})} e_{\bs_n \cdot (\upt{\tilde{\s}}{\lambda+(1)})} + \sum_{\tilde{\s} \SSTab(\tilde{\lambda};1)} a_{\upt{\tilde{\s}}{\lambda+(1)}} e_{\upt{\tilde{\s}}{\lambda+(1)}}\\
    &= \sum_{\s \in \SSTab(\lambda;1)_2} a_{\s} e_{\s} + \sum_{\s \in \SSTab(\lambda;1)_1} a_{\s} e_{\s} = \sum_{\s \in \SSTab(\lambda;1)} a_{\s} e_{\s}.
    \end{align*}

    \item[Case 2. $\lambda_r = \lambda_{r-1}$] We look at $S_i := \sum_{\tilde{\s} \in \SSTab(\tilde{\lambda};1)_i} (\bs_n + \tfrac{1}{\lambda_1-\lambda_r+r}) a_{\tilde{\s}} e_{\upt{\tilde{\s}}{\lambda+(1)}}$ for $i \in [1,3]$ separately.  We have, just as in Case 1,
    $$
    S_1 =
    \sum_{\tilde{\s} \in \SSTab(\tilde{\lambda};1)_1} (a_{\bs_n \cdot (\upt{\tilde{\s}}{\lambda+(1)})} e_{\bs_n \cdot (\upt{\tilde{\s}}{\lambda+(1)})} + a_{\upt{\tilde{\s}}{\lambda+(1)}} e_{\upt{\tilde{\s}}{\lambda+(1)}} ).
    $$
    Furthermore, with the help of Table 1, we have
    \begin{align*}
    S_2
    &
    =\sum_{\tilde{\s} \in \SSTab(\tilde{\lambda};1)_2} (1+ \tfrac{1}{\lambda_1 - \lambda_r + r}) a_{\tilde{\s}} e_{\upt{\tilde{\s}}{\lambda + (1)}} \\
    &= \left(\sum_{\substack{\tilde{\s} \in \SSTab(\tilde{\lambda};1)_2 \\ Q(\tilde{\s})_{\lambda_r} = \varnothing}} + \sum_{\substack{\tilde{\s} \in \SSTab(\tilde{\lambda};1)_2 \\ Q(\tilde{\s})_{\lambda_r} \ne \varnothing}}\right)   \tfrac{\lambda_1-\lambda_r+r+1}{\lambda_1-\lambda_r+r}\, a_{\tilde{\s}} e_{\upt{\tilde{\s}}{\lambda + (1)}} \\
    &= \sum_{\substack{\tilde{\s} \in \SSTab(\tilde{\lambda};1)_2 \\ Q(\tilde{\s})_{\lambda_r} = \varnothing}} a_{\upt{\tilde{\s}}{\lambda + (1)}} e_{\upt{\tilde{\s}}{\lambda + (1)}} +
    \sum_{\substack{\tilde{\s} \in \SSTab(\tilde{\lambda};1)_2 \\ Q(\tilde{\s})_{\lambda_r} \ne \varnothing}} -\tfrac{1}{\lambda_1-\lambda_r +r-1}\, a_{\upt{\tilde{\s}}{\lambda + (1)}} e_{\upt{\tilde{\s}}{\lambda + (1)}}.
    \end{align*}
    For $S_3$, observe first that for each $\tilde{\s} \in \SSTab(\tilde{\lambda};1)_3$, we have $\tilde{\s}(r-1,\lambda_r) = n$, and so the $r$-th row of $\tilde{\s}$ contains $[n-\lambda_r+1, n-1]$ by Proposition \ref{prop:semistd-colour-type}.
    For each $i \in [n-\lambda_r+1,n-1]$, let $\uu_{\tilde{\s},i} \in \RSTab(\lambda+(1))$ be the tableau obtained from $\upt{\tilde{\s}}{\lambda + (1)}$ by swapping $n$ in its $(r-1,\lambda_r)$-node with $i$ in its $r$-th row (and rearranging the $r$-th row so that it is increasing).
    Then $\uu_{\tilde{\s},i} \in \SSTab(\lambda;1)$ with the same colour type as $\upt{\tilde{\s}}{\lambda + (1)}$, so that $a_{\uu_{\tilde{\s},i}}  = a_{\upt{\tilde{\s}}{\lambda + (1)}}$ by Lemma \ref{lem:same-colour-type}.  Furthermore,
\begin{equation*}
\{ \uu_{\tilde{\s},i} \mid i \in [n-\lambda_1+1,n-1],\, \tilde{\s} \in \SSTab(\tilde{\lambda};1)_3 \} = \{ \upt{\tilde{\s}}{\lambda+(1)} \mid \tilde{\s} \in \SSTab(\tilde{\lambda};1)_2,\, Q(\tilde{\s})_{\lambda_r} \ne \varnothing \}.
\end{equation*}

Now, by Proposition \ref{prop:Garnir}(1),
$$
e_{\bs_n \cdot (\upt{\tilde{\s}}{\lambda+(1)})} = - e_{\upt{\tilde{\s}}{\lambda+(1)}} - \sum_{i= n-\lambda_r+1}^{n-1} e_{\uu_{\tilde{\s},i}}.
$$
    Thus,
    \begin{align*}
    S_3 &= \sum_{\tilde{\s} \in \SSTab(\tilde{\lambda};1)_3} (a_{\tilde{\s}} e_{\bs_n \cdot (\upt{\tilde{\s}}{\lambda + (1)})} + \tfrac{1}{\lambda_1-\lambda_r+r} a_{\tilde{\s}} e_{\upt{\tilde{\s}}{\lambda + (1)}}) \\
    &= \sum_{\tilde{\s} \in \SSTab(\tilde{\lambda};1)_3} (a_{\tilde{\s}}(-1 + \tfrac{1}{\lambda_1-\lambda_r+r}) e_{\upt{\tilde{\s}}{\lambda + (1)}} - \sum_{i=n-\lambda_r+1}^{n-1} a_{\tilde{\s}} e_{\uu_{\tilde{\s},i}} ) \\
    &= \sum_{\tilde{\s} \in \SSTab(\tilde{\lambda};1)_3} (a_{\upt{\tilde{\s}}{\lambda + (1)}} e_{\upt{\tilde{\s}}{\lambda+(1)}} +  \sum_{i=n-\lambda_r+1}^{n-1} \tfrac{\lambda_1 + \lambda_r + r}{\lambda_1 + \lambda_r + r-1} a_{\upt{\tilde{\s}}{\lambda + (1)}} e_{\uu_{\tilde{\s},i}}) \\
    &= \sum_{\tilde{\s} \in \SSTab(\tilde{\lambda};1)_3} (a_{\upt{\tilde{\s}}{\lambda + (1)}} e_{\upt{\tilde{\s}}{\lambda + (1)}} +  \sum_{i=n-\lambda_r+1}^{n-1} \tfrac{\lambda_1 + \lambda_r + r}{\lambda_1 + \lambda_r + r-1} a_{\uu_{\tilde{\s},i}} e_{\uu_{\tilde{\s},i}}) \\
    &= \sum_{\tilde{\s} \in \SSTab(\tilde{\lambda};1)_3} a_{\upt{\tilde{\s}}{\lambda + (1)}} e_{\upt{\tilde{\s}}{\lambda + (1)}} +  \sum_{\substack{\tilde{\s} \in \SSTab(\tilde{\lambda};1)_2 \\ Q(\tilde{\s})_{\lambda_r} \ne \varnothing}} \tfrac{\lambda_1 + \lambda_r + r}{\lambda_1 + \lambda_r + r-1} a_{\upt{\tilde{\s}}{\lambda + (1)}} e_{\upt{\tilde{\s}}{\lambda + (1)}}.
    \end{align*}
    Hence $f_{\upa{\lambda}{1}} = S_1 + S_2 + S_3 = \sum_{\s \in \SSTab(\lambda;1)} a_{\s} e_{\s}$ just like in Case 1, as desired.
    \end{description}
\end{proof}

\begin{cor} \label{cor:mu=(1)}
Let $A_0,\dotsc,A_t$ be all the removable nodes of $[\lambda+(1)]$, labelled from top to bottom (thus $A_0$ lies in the top row).  Then
$$
\dd_{\upa{\lambda}{1}} = \prod_{i=1}^t (\res(A_0) - \res(A_i)).
$$
\end{cor}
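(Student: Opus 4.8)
The plan is to read the denominator directly off the closed formula for $f_{\upa{\lambda}{1}}$ furnished by Theorem \ref{thm:mu=(1)}. By Lemma \ref{lem:f_in_terms_of_e}(3), $\dd_{\upa{\lambda}{1}}$ is the least common multiple of the denominators of the coefficients $q_{\upa{\lambda}{1},\s}$, which by Theorem \ref{thm:mu=(1)} are the $a_{\s}$ for $\s \in \SSTab(\lambda;1)$. Since the numerator $(-1)^{|Q(\s)|-|P(\s)|}$ of $a_{\s}$ is a unit and each factor $\lambda_1 - \lambda_i + \max(Q(\s)_{\lambda_i})$ (with $i \in P(\s) \subseteq [2,r]$, so $\lambda_i \leq \lambda_1$ and $\max(Q(\s)_{\lambda_i}) \geq i \geq 2$) is a positive integer, the denominator of $a_{\s}$ is exactly $\prod_{i \in P(\s)} \bigl(\lambda_1 - \lambda_i + \max(Q(\s)_{\lambda_i})\bigr)$. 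Hence $\dd_{\upa{\lambda}{1}} = \operatorname{lcm}\bigl\{ \prod_{i \in P(\s)}(\lambda_1 - \lambda_i + \max(Q(\s)_{\lambda_i})) \mid \s \in \SSTab(\lambda;1) \bigr\}$, and it remains to evaluate this. (If $r=1$ both sides of the corollary equal $1$, so assume $r \geq 2$.)

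The next step is to identify each factor above as a difference of residues. For $i \in P(\s)$, recall from the discussion following Lemma \ref{lem:Q} that $Q(\s)_{\lambda_i} = \{ j \in [i,r] \mid \lambda_j = \lambda_i \}$, so $m := \max(Q(\s)_{\lambda_i})$ is the last row of $\lambda$ of length $\lambda_i$; since $m \geq i \geq 2$, the node $(m,\lambda_m)=(m,\lambda_i)$ is a removable node of $[\lambda+(1)]$ lying below its first row, hence equals $A_j$ for a unique $j \in [1,t]$. As $\res(A_0) = \lambda_1$ and $\res(A_j) = \lambda_m - m$, the factor $\lambda_1 - \lambda_i + m$ equals $\res(A_0) - \res(A_j)$. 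Distinct elements of $P(\s)$ carry distinct lengths $\lambda_i$, hence distinct rows $m$ and distinct nodes $A_j$, so the denominator of $a_{\s}$ is $\prod_{j \in J(\s)}\bigl(\res(A_0)-\res(A_j)\bigr)$ for some subset $J(\s) \subseteq [1,t]$. Each $\res(A_0)-\res(A_j)$ is a positive integer, so this product divides $\prod_{j=1}^{t}\bigl(\res(A_0)-\res(A_j)\bigr)$, giving the divisibility $\dd_{\upa{\lambda}{1}} \mid \prod_{j=1}^{t}\bigl(\res(A_0)-\res(A_j)\bigr)$.

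For the reverse divisibility I would exhibit one tableau $\s^{*} \in \SSTab(\lambda;1)$ with $J(\s^{*}) = [1,t]$. Put $N_k = \lambda_1 + \dotsb + \lambda_k$, and let $\s^{*}$ be the tableau whose first row is the interval $[1,\lambda_1+1]$ and whose $k$-th row is $[N_{k-1}+2,\,N_k+1]$ for $2 \leq k \leq r$. One checks directly that $\s^{*}$ is standard and colour-semistandard of type $(\lambda_1,\dotsc,\lambda_r,1)$, and that $Q(\s^{*}) = [2,r]$; consequently $P(\s^{*})$ consists of the first row (among rows $\geq 2$) of each maximal run of equal parts of $\lambda$, and as $i$ runs over $P(\s^{*})$ the value $\max(Q(\s^{*})_{\lambda_i})$ runs over the last rows of these runs, that is, over $m_1,\dotsc,m_t$ where $A_j$ sits in row $m_j$. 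Thus $J(\s^{*}) = [1,t]$, the denominator of $a_{\s^{*}}$ is $\prod_{j=1}^{t}\bigl(\res(A_0)-\res(A_j)\bigr)$, and combined with the previous paragraph this forces equality.

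The one step needing care is the verification that the explicit $\s^{*}$ is genuinely standard, colour-semistandard, and has $Q(\s^{*}) = [2,r]$ — equivalently, that the colour type forced by Proposition \ref{prop:semistd-colour-type} when $Q = [2,r]$ is realised by an honest standard tableau. This is routine bookkeeping: the entries of $\s^{*}$ are consecutive intervals, so the row- and column-increasing conditions reduce to $N_{k-1} < N_k$, the colour of the entry in row $k$, column $j$ is $c_k$ or $c_{k+1}$, and the strict increase of colours down each column follows from $\lambda_{k-1} \geq \lambda_k$ (with equality precisely in the potentially delicate case, where the offending entry of row $k$ turns out to have colour $c_{k+1}$). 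No further input is required beyond Theorem \ref{thm:mu=(1)} and the facts about $P(\s)$ and $Q(\s)$ recorded before it.
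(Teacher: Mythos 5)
Your proof is correct and follows essentially the same route as the paper's: both read $\dd_{\upa{\lambda}{1}}$ off Theorem \ref{thm:mu=(1)} via Lemma \ref{lem:f_in_terms_of_e}(3), identify each factor $\lambda_1-\lambda_i+\max(Q(\s)_{\lambda_i})$ with a residue difference $\res(A_0)-\res(A_j)$ to obtain the divisibility $\dd_{\upa{\lambda}{1}} \mid \prod_{j=1}^{t}(\res(A_0)-\res(A_j))$, and then exhibit a single tableau whose coefficient realises the full product. The only divergence is the choice of witness: the paper cyclically permutes the entries of $\upa{\lambda}{1}$ sitting at the removable nodes, whereas your $\s^{*}$ is precisely the initial tableau $\IT^{\lambda+(1)}$, which indeed lies in $\SSTab(\lambda;1)$ with $Q(\s^{*})=[2,r]$ and serves the same purpose.
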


\begin{proof}
Firstly, $[\lambda+(1)]$ has a removable node on its top row, so $A_0 = (1,\lambda_1+1)$.
Let $\s \in \SSTab(\lambda;1)$.
For each $i \in P(\s)$, we have $\max(Q(\s)_{\lambda_i}) = \max\{ j \in [1,r] \mid \lambda_j = \lambda_i \}$, so that the node
$(\max(Q(\s)_{\lambda_i}), \lambda_i) = A_{n_i}$ for some $n_i \in [1,t]$.
Furthermore,
$$\lambda_1 - \lambda_i + \max(Q(\s)_{\lambda_i}) = \res(A_0) - \res(A_{n_i}).$$
Thus $|a_{\s}| = \prod_{i\in P(\s)} \frac{1}{\res(A_0) - \res(A_{n_i})}$.
This shows $\dd_{\upa{\lambda}{1}} \mid \prod_{i=1}^t (\res(A_0) - \res(A_{i}))$.
To see that we indeed have equality, let $\s$ be the tableau obtained from $\upa{\lambda}{1}$ by moving only its entries at the removable nodes, so that the entry at $A_i$ appears at $A_{i-1}$ for all $i \in [1,t]$, and the entry at $A_0$ (namely $n+1$) appears at $A_t$.
Then $\s \in \SSTab(\lambda;1)$ with $a_{\s} = \prod_{i=1}^t \frac{1}{\res(A_0) - \res(A_i)}$, and we are done.
\end{proof}

\begin{eg}
The removable nodes of $(4,3,2,2)$ are $A_0 = (1,4)$, $A_1 = (2,3)$ and $A_2 = (4,2)$.  By Corollary \ref{cor:mu=(1)},
\[
\dd_{\upa{(3,3,2,2)}{1}} = (\res(A_0) - \res(A_1))(\res(A_0) - \res(A_2)) = (3-1)(3-(-2)) = 10,
\]
agreeing of course with the expression of $f_{\upa{(3,3,2,2)}{1}}$ obtained in Example \ref{E:mu=(1)}.
 \end{eg}

\begin{rem}
We note that, with Corollary \ref{cor:mu=(1)} and \cite[Theorem 3.13]{FLT}, we can determine $\theta_{\lambda,(1)}$ and the explicit condition on $p$ for the splitting of the canonical morphism $\iota_{\lambda,(1)} : \Delta(\lambda+(1)) \to \Delta(\lambda) \otimes \Delta(1)$ over $\ZP$.
\end{rem}

\section{Young's seminormal basis vector $f_{\upa{(k,\ell^s)}{m}}$} \label{sec:kl^sm}

Throughout this section, let $k,\ell,m,s \in \ZZ^+$ with $k \geq \ell$, and let
$$\kl = (k,\ell^s).$$ We study Young's seminormal basis vector
$f_{\upa{(k,\ell^s)}{m}} = f_{\upa{\lambda}{m}}$.
By Theorem \ref{lem:semistandard}(2), we have
$$f_{\upa{\kl}{m}} = \sum_{\s \in \SSTab(\kl;m)} q_{\upa{\kl}{m},\s} e_{\s}.$$
We shall obtain closed formulae for $q_{\upa{(k,1^s)}{m},\s}$ and $q_{\upa{(k,\ell)}{m},\s}$, and some reduction results for the general case.

\begin{lem} \label{lem:sstd-description}
Let $\s \in \SSTab(\kl;m)$.  Then, for all $i \in [2,s+1]$, only the integers with colour $c_i$ and $c_{i+1}$ may appear in the $i$-th row of $\s$.

In particular, all integers in $[1,k]$ appear in the first row of $\s$.
\end{lem}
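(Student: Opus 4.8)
The plan is to use the colour-semistandard structure together with Lemma~\ref{lem:easy-lem-for-semistd}. Recall that for $\kl = (k,\ell^s)$ with $\nu = \kl+(m)$, the composition $\nu_{\kl}$ is obtained by concatenating $\kl$ and $(m)$, so it has $s+2$ parts: the first is $k$, the next $s$ are $\ell$, and the last is $m$. Thus the colours $c_1,\dotsc,c_{s+1},c_{s+2}$ are assigned so that $c_1$ is carried by $[1,k]$, $c_i$ (for $2\le i\le s+1$) by an interval of $\ell$ integers, and $c_{s+2}$ by the final $m$ integers. Since the last $m$ columns of $\IT^{\nu_{\kl}}$ are added to the first row, the colour $c_{s+2}$ integers can only ever sit in the first row of a standard $\nu$-tableau.

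First I would fix $\s \in \SSTab(\kl;m)$ and $i \in [2,s+1]$, and aim to show that any integer in the $i$-th row of $\s$ has colour $c_i$ or $c_{i+1}$. By Lemma~\ref{lem:easy-lem-for-semistd}, any integer in the $i$-th row of $\s$ has colour $c_{j'}$ with $j' \geq i$, so it remains to rule out $j' \geq i+2$. Suppose for contradiction that some entry $x$ in the $i$-th row of $\s$ has colour $c_{j'}$ with $j' \geq i+2$. The key point is a counting argument: by Lemma~\ref{lem:easy-lem-for-semistd} applied at row $i-1$ (or directly, since colours strictly increase down columns), all integers with colours $c_1,\dotsc,c_{i-1}$ occupy the first $i-1$ rows of $\s$; these number $k + (i-2)\ell$ integers, which is exactly $|\kl^{\leq i-1}|$ when $i \le s+1$, i.e.\ they exactly fill the first $i-1$ rows (which have total size $k+(i-2)\ell$). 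Hence the $i$-th row of $\s$ contains no integer of colour $c_j$ with $j < i$; combined with the hypothesis, its $\ell$ entries all have colour $\geq i$, with at least one of colour $\geq i+2$. But then consider the column of $\s$ containing $x$. Since $i \le s+1$ and $\kl$ has $s+1$ rows (the rows of length $\ell$ being rows $2,\dotsc,s+1$), the column containing $x$ — which is one of the first $\ell$ columns — continues at least down to row $s+1$ if it is a column of length $s+1$; more carefully, the entries of $\s$ strictly below $x$ in its column must have strictly larger colours than $c_{j'}$, so colours $c_{j'+1}, c_{j'+2}, \dotsc$; but the available colours are only $c_1,\dotsc,c_{s+2}$, and since $x$ is in row $i$ there are at most $s+1-i$ rows below it in that column, forcing $j' + (s+1-i) \le s+2$, i.e.\ $j' \le i+1$, a contradiction.

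Actually the cleanest route is purely by the strictly-increasing-colours-down-columns condition combined with the exact-fill counting, so I would streamline as follows. Show by induction on $i$ (for $i$ from $2$ to $s+1$) that the $i$-th row of $\s$ consists only of integers with colours $c_i$ and $c_{i+1}$: the inductive hypothesis plus the pigeonhole count shows rows $1,\dotsc,i-1$ use up exactly colours $c_1,\dotsc,c_{i-1}$ (here one uses that row $1$ has length $k = $ number of colour-$c_1$ integers, and each of rows $2,\dotsc,i-1$ has length $\ell = $ number of integers of its lowest available colour, so no colour can "overflow" downward by more than one row without violating column-strictness); then the $i$-th row's entries have colour $\geq c_i$, and cannot have colour $\geq c_{i+2}$ because such an entry would need $j' - i$ distinct strictly-larger colours available below it in at most $s+1-i$ rows (plus possibly the first row, but $c_{s+2}$ lives only there and $x$ is not in row~1). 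The "in particular" clause is then immediate: the integers $[1,k]$ are exactly those of colour $c_1$, and by the above they can appear only in the first row (colour $c_1 = c_i$ forces $i = 1$), and since there are $k$ of them and the first row has length $k$, they fill it.

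The main obstacle will be making the counting step fully rigorous — specifically, pinning down that rows $1,\dotsc,i-1$ contain \emph{exactly} the integers of colours $c_1,\dotsc,c_{i-1}$ and no others, rather than merely "at least". This follows from Lemma~\ref{lem:easy-lem-for-semistd} (all colour-$\le c_{i-1}$ integers lie in the first $i-1$ rows) together with the sizes: $|{}$colours $c_1,\dotsc,c_{i-1}| = k+(i-2)\ell = \lambda_1 + \lambda_2 + \dots + \lambda_{i-1}$ equals the total number of cells in the first $i-1$ rows, so the containment is an equality of sets. Once that is in hand, the colour-strictness down columns does the rest, and the argument is short.
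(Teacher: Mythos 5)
Your core argument is the right one, and it is exactly the paper's (one-line) proof: by Lemma \ref{lem:easy-lem-for-semistd} an entry in row $i$ has colour $c_{j'}$ with $j'\ge i$, and since every column of $[(k+m,\ell^s)]$ that meets a row $i\ge 2$ has length exactly $s+1$, the entries strictly below it carry strictly increasing colours, so $j'+(s+1-i)\le s+2$, i.e.\ $j'\le i+1$. One small correction: you need \emph{exactly} (or at least) $s+1-i$ cells below the entry for this inequality to bite, not ``at most $s+1-i$''; you do get this because the entry lies in one of the first $\ell$ columns, each of which has length $s+1$.

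However, the counting claim you lean on in the ``streamlined'' version, and which you single out as ``the main obstacle'', is not merely hard to make rigorous --- it is false. The tableau $\s$ has shape $(k+m,\ell^s)$, so its first row has length $k+m$, not $k$, and the first $i-1$ rows have $(k+m)+(i-2)\ell$ cells. Hence the $k+(i-2)\ell$ integers of colours $c_1,\dotsc,c_{i-1}$ do \emph{not} exactly fill the first $i-1$ rows: the first row contains $m$ further integers, which may carry any of the colours $c_2,\dotsc,c_{s+2}$ (this is precisely what Corollary \ref{cor:sum-of-weight} quantifies via the weight). For the same reason, your side remark that the colour-$c_{s+2}$ integers ``can only ever sit in the first row'' is wrong --- they may also occupy row $s+1$, as in the paper's example $\s_{(2,1)}$. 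Fortunately none of this is needed: the lower bound $j'\ge i$ comes straight from Lemma \ref{lem:easy-lem-for-semistd} and the upper bound from the column-length argument, so your first (contradiction) version, with the exact-fill digression deleted, is already a complete proof matching the paper's. The ``in particular'' clause then needs only that colour $c_1$ is excluded from rows $2,\dotsc,s+1$; your added claim that $[1,k]$ ``fills'' the first row is again off (that row has $k+m$ cells), but it is not part of the statement.
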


\begin{proof}
This is clear since $\s$ has $s+1$ rows while the integers appearing in $\s$ have $s+2$ colours which are strictly increasing down each column.
\end{proof}

\begin{cor} \label{cor:sum-of-weight}
Let $\s \in \SSTab(\kl;m)$, and for each $i \in [1,s]$, let $n_i(\s)$ be the number of integers having colour $c_{i+1}$ in the first row of $\s$.
Then:
\begin{enumerate}[itemsep = 3pt]
\item $\sum_{i=1}^s n_i(\s) \leq \min(\ell,m)$;
\item for each $i \in [2,s+1]$, the $i$-th row of $\s$ contains exactly $\ell - \sum_{r=1}^{i-1}n_r(\s)$ integers with colour $c_i$ and exactly $\sum_{r=1}^{i-1} n_r(\s)$ integers with colour $c_{i+1}$.
\end{enumerate}
\end{cor}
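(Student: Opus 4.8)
The plan is a row-by-row colour count for $\s$, using Lemma~\ref{lem:sstd-description} to restrict which colours are available in each row below the first, and then solving the resulting recursion. Recall that here $\nu_{\kl} = (k,\ell^s,m)$, so that there are $k$ integers of colour $c_1$, exactly $\ell$ integers of colour $c_j$ for each $j \in [2,s+1]$, and $m$ integers of colour $c_{s+2}$.

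First I would fix notation: for $i \in [2,s+1]$ let $p_i$ and $q_i$ denote the number of integers in the $i$-th row of $\s$ of colour $c_i$ and of colour $c_{i+1}$ respectively. By Lemma~\ref{lem:sstd-description} these are the only colours that occur in the $i$-th row, and that row has $\ell$ cells, so $p_i + q_i = \ell$. Also by Lemma~\ref{lem:sstd-description} all $k$ integers of colour $c_1$ (namely all of $[1,k]$) lie in the first row, so the remaining $m$ cells of the first row carry integers whose colours lie among $c_2,\dotsc,c_{s+2}$; by definition $n_{j-1}(\s)$ of these have colour $c_j$ for each $j \in [2,s+1]$, whence $\sum_{i=1}^{s} n_i(\s) \le m$, which is one of the two inequalities in~(1).

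Next I would count, for each $j \in [2,s+1]$, the $\ell$ integers of colour $c_j$ according to the row in which they lie. By Lemma~\ref{lem:sstd-description} such an integer must lie in row $1$, row $j-1$, or row $j$, and in no other row; for $j \ge 3$ these three rows are distinct and all exist, giving $n_{j-1}(\s) + q_{j-1} + p_j = \ell$, while for $j = 2$ the rows $1$ and $j-1$ coincide and we instead obtain $n_1(\s) + p_2 = \ell$. From the latter, $q_2 = \ell - p_2 = n_1(\s)$; substituting into the former and inducting on $i$ from $2$ to $s+1$ gives
\[
p_i = \ell - \sum_{r=1}^{i-1} n_r(\s), \qquad q_i = \sum_{r=1}^{i-1} n_r(\s) \qquad (i \in [2,s+1]),
\]
which is exactly statement~(2). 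Finally, taking $i = s+1$ and using $p_{s+1} \ge 0$ yields $\sum_{i=1}^{s} n_i(\s) \le \ell$, which together with the earlier bound completes~(1).

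The argument is elementary; the only delicate point is the bookkeeping at the boundary, where the colour $c_2$ must be handled separately because ``row $j-1$'' degenerates to the first row, and where for $j \ge 3$ one must check that the rows $1$, $j-1$, $j$ really are distinct so that no integer of colour $c_j$ is double-counted. I do not expect any genuine obstacle beyond this.
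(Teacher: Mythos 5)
Your proof is correct and follows essentially the same route as the paper: both arguments rest on Lemma \ref{lem:sstd-description} to confine each colour $c_j$ to rows $1$, $j-1$ and $j$, and then solve the resulting row-by-row recursion by induction, with the bound $\sum_i n_i(\s)\leq m$ coming from the first-row count. The only (cosmetic) difference is that you deduce $\sum_i n_i(\s)\leq \ell$ from part (2) via $p_{s+1}\geq 0$, whereas the paper proves it separately by counting the $\ell(s-1)$ entries of rows $2$ through $s$; both are valid.
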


\begin{proof} \hfill
\begin{enumerate}
\item The first row of $\s$, which has length $k+m$, contains exactly $k$ integers with colour $c_1$ by Lemma \ref{lem:sstd-description}.
    Thus $m = \sum_{i=1}^{s+1} n_i(\s) \geq \sum_{i=1}^{s} n_i(\s)$.
    On the other hand, the second to $s$-th rows of $\s$ are filled with integers with colour $c_2,\dotsc,c_{s+1}$ only by Lemma \ref{lem:sstd-description}, so that there are exactly $\ell(s-1)$ such integers in these rows.
    This leaves at most $\ell$ integers with these colours in the first row of $\s$. Hence $\sum_{i=1}^s n_i(\s) \leq \ell$.

\item We prove by induction on $i$.  By Lemma \ref{lem:sstd-description}, the $\ell$ integers with colour $c_2$ lie in the first two rows of $\s$.  Since the first row of $\s$ contains exactly $n_1(\s)$ integers with colour $c_2$, the second row of $\s$ contains exactly $\ell - n_1(\s)$ integers with colour $c_2$, and consequently also contains exactly $n_1(\s)$ integers with colour $c_3$ by Lemma \ref{lem:sstd-description}.
    Thus, the statement holds for $i=2$.

    Assume that $i < s$ and that the $i$-th row of $\s$  contains exactly $\sum_{r=1}^{i-1} n_r(\s)$ integers with colour $c_{i+1}$ and exactly $\ell - \sum_{r=1}^{i-1} n_r(\s)$ integers with colour $c_i$.
    Since the $\ell$ integers with colour $c_{i+1}$ may only appear in the first, $i$-th and $(i+1)$-th row of $\s$ by Lemma \ref{lem:sstd-description}, and there are exactly $n_i(\s)$ integers with colour $c_{i+1}$ in the first row, the $(i+1)$-th row of $\s$ must contain exactly $\ell - \sum_{r=1}^{i} n_r(\s)$ integers with colour $c_{i+1}$, and hence exactly $\sum_{r=1}^{i} n_r(\s)$ integers with colour $c_{i+2}$, again by Lemma \ref{lem:sstd-description}.
\end{enumerate}
\end{proof}

\begin{defn}
Keeping the notation introduced in Corollary \ref{cor:sum-of-weight}, we call the sequence $\wt(\s):=(n_1(\s),\dotsc, n_s(\s))$ the {\em weight} of $\s \in \SSTab(\kl;m)$.% and write $\wt(\s) = (n_1(\s),\dotsc, n_s(\s))$.
\end{defn}

\begin{eg} For \[\s=\raisebox{3mm}{\begin{ytableau}
\scriptstyle 1 & \scriptstyle 2 & \scriptstyle 3 & \scriptstyle 4 & \scriptstyle \textcolor{red}{7} & \scriptstyle \textcolor{blue}{10} \\
\scriptstyle \textcolor{red}{5} & \scriptstyle \textcolor{red}{6} & \scriptstyle \textcolor{blue}{8} \\
\scriptstyle \textcolor{blue}{9} & \scriptstyle \textcolor{violet}{11} & \scriptstyle \textcolor{violet}{12}
\end{ytableau}}\in\SSTab((4,3^2);2),\]
its weight $\wt(\s) = (1,1)$.
\end{eg}

Let
$$\BW = \BW^{\min(\ell,m)}_s = \{(w_1,\dotsc, w_s) \in (\ZZ_{\geq 0})^s \mid \sum_{i=1}^s w_i \leq \min(\ell,m) \}.$$
For each $\bw = (w_1,\dotsc, w_s) \in \BW$, let $\s_{\bw} = \s^{k,\ell,m}_{\bw}$ be the $(\klm)$-tableau obtained as follows:  starting from $\upa{\kl}{m}$,
working from $j = s$ down to $j=1$,
swap the rightmost $\sum_{i=1}^j w_i$ integers having colour $c_{j+1}$ in the $(j+1)$-th row %(which lies in row $j+1$)
with the leftmost $\sum_{i=1}^j w_i$ integers in the first row having colour $c_{j+2}$. % (thus $j$ runs from $s$ downwards to $1$).
A moment's thought, together with the worked example below, should convince the reader that $\s_{\bw} \in \SSTab(\kl;m)$ with weight $\bw$, and is in fact the smallest such (with respect to $\unrhd$).

\begin{eg}
Let $k = \ell = m = 3$, $s=2$ and $\bw=(2,1)$.  Then
$$
\upa{\kl}{m} = \raisebox{3mm}{\ytableausetup{notabloids}
\begin{ytableau}
\scriptstyle 1 & \scriptstyle 2 & \scriptstyle 3 & \scriptstyle \textcolor{violet}{10} & \scriptstyle \textcolor{violet}{11} & \scriptstyle \textcolor{violet}{12} \\
\scriptstyle \textcolor{red}{4} & \scriptstyle \textcolor{red}{5} & \scriptstyle \textcolor{red}{6} \\
\scriptstyle \textcolor{blue}{7} & \scriptstyle \textcolor{blue}{8} & \scriptstyle \textcolor{blue}{9}
\end{ytableau}}
\ \xrightarrow{j=2}\
\raisebox{3mm}{\begin{ytableau}
\scriptstyle 1 & \scriptstyle 2 & \scriptstyle 3 & \scriptstyle \textcolor{blue}{7} & \scriptstyle \textcolor{blue}{8} & \scriptstyle \textcolor{blue}{9} \\
\scriptstyle \textcolor{red}{4} & \scriptstyle \textcolor{red}{5} & \scriptstyle \textcolor{red}{6} \\
\scriptstyle \textcolor{violet}{10} & \scriptstyle \textcolor{violet}{11} & \scriptstyle \textcolor{violet}{12}
\end{ytableau}}
\ \xrightarrow{j=1}\
\s_{(2,1)} = \raisebox{3mm}{
\begin{ytableau}
\scriptstyle 1 & \scriptstyle 2 & \scriptstyle 3 & \scriptstyle \textcolor{red}{5} & \scriptstyle \textcolor{red}{6} & \scriptstyle \textcolor{blue}{9} \\
\scriptstyle \textcolor{red}{4} & \scriptstyle \textcolor{blue}{7} & \scriptstyle \textcolor{blue}{8} \\
\scriptstyle \textcolor{violet}{10} & \scriptstyle \textcolor{violet}{11} & \scriptstyle \textcolor{violet}{12}
\end{ytableau}} .
$$
\end{eg}

By Corollary \ref{cor:sum-of-weight}, we have:

\begin{cor} \label{cor:wt-controls-colour-type}
For $\s,\t \in \SSTab(\kl; m)$, we have $\wt(\s) = \wt(\t)$ if and only if $\s$ and $\t$ have the same colour type.
\end{cor}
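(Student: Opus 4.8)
The plan is to deduce the statement directly from Corollary~\ref{cor:sum-of-weight} together with Lemma~\ref{lem:sstd-description}. The forward implication is immediate: if $\s$ and $\t$ have the same colour type, then in particular $\s(1,j)$ and $\t(1,j)$ carry the same colour for every $j$, so the number $n_i(\s)$ of integers of colour $c_{i+1}$ in the first row equals $n_i(\t)$ for each $i \in [1,s]$; that is, $\wt(\s) = \wt(\t)$.

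For the converse, I would first record the (easy but crucial) observation that in any colour-semistandard tableau the colours are \emph{weakly increasing along each row}: the integers of a fixed colour form a contiguous interval, and the interval of colour $c_a$ precedes that of colour $c_b$ whenever $a < b$, so a row-increasing tableau automatically has non-decreasing colours along its rows. Consequently, the colour of $\s(i,j)$ is completely determined by $j$ together with the list of colour multiplicities occurring in the $i$-th row of $\s$.

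It then suffices to show that these colour multiplicities in row $i$ depend only on $\wt(\s)$. For $i \in [2,s+1]$ this is exactly the content of Lemma~\ref{lem:sstd-description} (only colours $c_i$ and $c_{i+1}$ occur in row $i$) and Corollary~\ref{cor:sum-of-weight}(2) (their multiplicities are $\ell - \sum_{r=1}^{i-1} n_r(\s)$ and $\sum_{r=1}^{i-1} n_r(\s)$ respectively). For $i = 1$, Lemma~\ref{lem:sstd-description} gives that colour $c_1$ occurs exactly $k$ times, the definition of $n_r(\s)$ gives that colour $c_{r+1}$ occurs $n_r(\s)$ times for each $r \in [1,s]$, and the remaining $m - \sum_{r=1}^{s} n_r(\s)$ entries are forced to have colour $c_{s+2}$, the only colour still available. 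Hence $\wt(\s) = \wt(\t)$ forces $\s(i,j)$ and $\t(i,j)$ to have the same colour for all $(i,j)$, i.e.\ $\s$ and $\t$ have the same colour type.

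I do not anticipate any real obstacle; the whole argument is elementary bookkeeping. The only point requiring a moment's care is the justification that colours increase weakly along rows, which is not literally part of the definition of colour-semistandard but follows from the interval structure of the colour classes (equivalently, from the remark characterising colour-semistandardness via semistandardness of $\nu_\lambda(\s)$).
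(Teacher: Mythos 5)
Your proof is correct and follows essentially the same route as the paper, which simply cites Corollary \ref{cor:sum-of-weight} (together with Lemma \ref{lem:sstd-description}) for this fact. You merely make explicit the step the paper leaves implicit, namely that colours are weakly increasing along the rows of a row-standard tableau, so the per-row colour multiplicities determined by $\wt(\s)$ pin down the colour of every entry.
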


We next deal with the case $s=1$.

\begin{thm} \label{thm:klm}
We have
$$
f_{\upa{(k,\ell)}{m}} = \sum_{\s \in \SSTab((k,\ell);m)} \frac{1}{\binom{k-\ell+1+\wt(\s)}{\wt(\s)}} \, e_{\s}.
$$
Here we identify $\BW^{\min(\ell,m)}_1$ with $[0,\min(\ell,m)]$, so that $\wt(\s) \in \mathbb{Z}_{\geq 0}$ for all $\s \in \SSTab((k,\ell); m)$.
\end{thm}

\begin{proof}
First, for $x,y \in \ZZ^+$ with $x \geq y$, and $i \in [0,y]$, let $\uu^{x,y}_i$ be the standard $(x+1,y)$-tableau whose first row contains $[1,x]$ and $x+1+i$.  Then $\SSTab((x,y);1) = \{ \uu^{x,y}_i \mid i \in [0,y] \}$ and
$$
\wt(\uu^{x,y}_i) =
\begin{cases}
0, &\text{if } i = y; \\
1, &\text{if } i \in [0,y-1].
\end{cases}
$$

We prove by induction on $m$.  For $m=1$, we have
\begin{align*}
f_{\upa{(k,\ell)}{1}} =  e_{\uu^{k,\ell}_{\ell}} + \tfrac{1}{k-\ell+2} \sum_{i=0}^{\ell-1} e_{\uu^{k,\ell}_i},
\end{align*}
by Theorem \ref{thm:mu=(1)}, agreeing with the theorem here.

For $m>1$, we have
$$f_{\upa{(k,\ell)}{m}}
= \DD{\upa{(k,\ell)}{m-1}} \DD{\upa{(k+m-1,\ell)}{1}} e_{\IT^{(k+m,\ell)}}$$
 by Corollary \ref{cor:same-relative-positions}(2).  By induction,
\begin{align*}
\DD{\upa{(k,\ell)}{m-1}} &= \sum_{\tilde{\s} \in \SSTab((k,\ell);m-1)} \tfrac{1}{\binom{k-\ell+1+\wt(\tilde{\s})}{\wt(\tilde{\s})}} \,d(\tilde{\s}), \\
\DD{\upa{(k+m-1,\ell)}{1}} &= \sum_{\uu \in \SSTab((k+m-1,\ell);1)} \tfrac{1}{\binom{k+m-\ell+\wt(\uu)}{\wt(\uu)}} d(\uu) \\
&= \sum_{i = 0}^{\ell-1} \tfrac{1}{k+m-\ell+1} d(\uu^{k+m-1,\ell}_i) + d(\uu^{k+m-1,\ell}_{\ell}).
\end{align*}
Thus,
\begin{align*}
f_{\upa{(k,\ell)}{m}}
&= \sum_{\tilde{\s} \in \SSTab((k,\ell);m-1)} \tfrac{1}{\binom{k-\ell+1+\wt(\tilde{\s})}{\wt(\tilde{\s}) }} \left(\sum_{i=0}^{\ell-1} \tfrac{1}{k+m-\ell+1} e_{d(\tilde{\s}) \cdot \uu^{k+m-1,\ell}_i} + e_{d(\tilde{\s}) \cdot \uu^{k+m-1,\ell}_{\ell}} \right).
\end{align*}
Note that for each $\tilde{\s} \in \SSTab((k,\ell);m-1)$ and $i \in [0,\ell]$, $\overline{d(\tilde{\s}) \cdot \uu^{k+m-1,\ell}_i} \in \STab((k+m,\ell))$ and its first row contains the first row of $\tilde{\s}$, and that $k+\ell+m$ appears in the same node as in both $\uu^{k+m-1,\ell}_i$ and $\overline{d(\tilde{\s}) \cdot \uu^{k+m-1,\ell}_i}$.
Thus, for $w \in [0,\min(\ell,m)]$ with $w < m$, we have $e_{d(\tilde{\s}) \cdot \uu^{k+m-1,\ell}_i} = e_{\s^{k,\ell,m}_w}$ if and only if $\tilde{\s} = \s^{k,\ell,m-1}_w$ and $i = \ell$, so that the coefficient of $e_{\s^{k,\ell,m}_w}$ appearing in $f_{\upa{(k,\ell)}{m}}$ is $\tfrac{1}{\binom{k-\ell+1+\wt(\tilde{\s})}{\wt(\tilde{\s}) }} = \tfrac{1}{\binom{k-\ell+1+w}{w}}$.
On the other hand, if $m \in [0,\min(\ell,m)]$ (so $\ell \geq m$), then
$\IT^{(k+m,\ell)} \in \SSTab((k,\ell);m)$ with $\wt(\IT^{(k+m,\ell)}) = m$, and
$e_{d(\tilde{\s}) \cdot \uu^{k+m-1,\ell}_i} = e_{\IT^{(k+m,\ell)}}$ if and only if
$$[1,k+m] = d(\tilde{\s}) d(\uu^{k+m-1,\ell}_i) ([1,k+m]) = d(\tilde{\s}) ([1,k+m-1] \cup \{k+m+i\}).$$
Since $\IT^{(k+m-1,\ell)}(1,j) = j$ for all $j \in [1,k+m-1]$ and $\IT^{(k+m-1,\ell)}(2,i+1) = k+m+i$, we have
\begin{align*}
&d(\tilde{\s}) ([1,k+m-1] \cup \{k+m+i\}) \\
&= d(\tilde{\s}) (\{ \IT^{(k+m-1,\ell)}(1,j) \mid j \in [1,k+m-1] \} \cup \{ \IT^{(k+m-1,\ell)}(2,i+1) \}) \\
&= \{ \tilde{\s}(1,j) \mid j \in [1,k+m-1] \} \cup \{ \tilde{\s}(2,i+1) \}.
\end{align*}
Thus, $[1,k+m] = d(\tilde{\s}) ([1,k+m-1] \cup \{k+m+i\})$ only if $i =0$ (otherwise $\tilde{\s}(2,1) \notin [1,k+m]$ while $\tilde{\s}(2,i+1) \in [1,k+m]$, contradicting the (row) standardness of $\tilde{\s}$), in which case, we can choose any $\tilde{\s} \in \SSTab((k,\ell);m-1)$ whose first row contains both $[1,k]$ and an $(m-1)$-element subset of $[k+1,k+m]$; all of such tableaux have weight $m-1$.
Hence, the coefficient of $e_{\IT^{(k+m,\ell)}}$ appearing in $f_{\upa{(k,\ell)}{m}}$ is $\tfrac{m}{\binom{k-\ell+m}{m-1 }(k+m-\ell+1)} = \tfrac{1}{\binom{k-\ell+1+m}{m}}$.  By Theorem \ref{lem:semistandard}(1) and Corollary \ref{cor:wt-controls-colour-type}, we conclude that
$$
f_{\upa{(k,\ell)}{m}} = \sum_{\s \in \SSTab((k,\ell);m)} \tfrac{1}{\binom{k-\ell+1+\wt(\s)}{\wt({\s})}} e_{\s}
$$
as desired.
\end{proof}

To obtain a closed formula for $\dd_{\upa{(k,\ell)}{m}}$, we need the following:

\begin{lem} \label{lem:binomial}
Let $a \in \ZZ^+$ and $b \in \ZZ_{\geq 0}$.  Then \[\lcm\left\{ \binom{a+r}{a} \mid r \in [0, b] \right\} = \frac{\lcm[a,\, a+b]}{a}.\]
\end{lem}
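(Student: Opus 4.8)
The plan is to prove the identity $\operatorname{lcm}\left\{ \binom{a+r}{a} \mid r \in [0,b] \right\} = \frac{\operatorname{lcm}[a,\, a+b]}{a}$ by comparing the two sides prime by prime. Fix a prime $p$, and for a positive integer $x$ write $v_p(x)$ for the exponent of $p$ in $x$. The left-hand side has $p$-adic valuation $\max_{r \in [0,b]} v_p\!\left(\binom{a+r}{a}\right)$, while the right-hand side has $p$-adic valuation $\max_{r\in[0,b]} v_p(a+r) - v_p(a)$ (since $a \mid a+r$ is false in general, but $a \mid \operatorname{lcm}[a,a+b]$ certainly holds, so the right-hand side is a genuine integer and its valuation is $\max\{v_p(x) : x \in [a,a+b]\} - v_p(a)$; note $v_p(a)$ is among the values $v_p(x)$, so the max is $\geq v_p(a)$). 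So it suffices to show, for every prime $p$,
\[
\max_{r \in [0,b]} v_p\!\left(\binom{a+r}{a}\right) = \max_{r \in [0,b]} v_p(a+r) \;-\; v_p(a).
\]

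For the inequality $\geq$: pick $r^\ast \in [0,b]$ achieving $\max_{r} v_p(a+r)$, say $v_p(a+r^\ast) = e$. Using $\binom{a+r^\ast}{a} = \frac{a+r^\ast}{r^\ast}\binom{a+r^\ast-1}{a-1}$ — or more directly the identity $a\binom{a+r^\ast}{a} = (a+r^\ast)\binom{a+r^\ast-1}{a-1}$ — we get $v_p(a) + v_p\!\binom{a+r^\ast}{a} = v_p(a+r^\ast) + v_p\!\binom{a+r^\ast-1}{a-1} \geq e$, hence $v_p\!\binom{a+r^\ast}{a} \geq e - v_p(a)$, giving "$\geq$". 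For the inequality $\leq$: fix any $r \in [0,b]$ and use Kummer's theorem — $v_p\!\binom{a+r}{a}$ equals the number of carries when adding $a$ and $r$ in base $p$. The plan here is to show that a single carry forces the existence of some $r' \in [0,r]$ with $v_p(a+r')$ strictly larger than one might naively expect, and more precisely that the total number of carries is at most $\max_{r'\in[0,r]} v_p(a+r') - v_p(a)$; equivalently, if adding $a$ and $r$ produces $c$ carries, then some partial sum $a + r'$ with $0 \le r' \le r$ is divisible by $p^{\,c + v_p(a)}$. The cleanest route is probably induction on $c$: the highest carry occurs at some base-$p$ digit position $t$, and truncating $r$ to its digits below position $t+1$ (or a suitable modification) produces an $r'$ with the claimed divisibility and fewer carries, closing the induction.

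The main obstacle I anticipate is the "$\leq$" direction: translating the carry count from Kummer's theorem into a statement about the $p$-adic valuation of some $a+r'$ in the range requires a careful base-$p$ digit manipulation — one must exhibit the witness $r'$ explicitly, and the bookkeeping of which digits of $r$ to keep is fiddly. An alternative to sidestep Kummer entirely: observe that $\binom{a+r}{a} = \prod_{i=1}^{r}\frac{a+i}{i}$, so $v_p\!\binom{a+r}{a} = \sum_{i=1}^{r}\big(v_p(a+i) - v_p(i)\big)$; since $\sum_{i=1}^r v_p(a+i) - \sum_{i=1}^r v_p(i)$ telescopes against $\sum_{j} (\lfloor (a+r)/p^j\rfloor - \lfloor a/p^j \rfloor - \lfloor r/p^j \rfloor)$, and each summand $\lfloor (a+r)/p^j\rfloor - \lfloor a/p^j\rfloor - \lfloor r/p^j\rfloor \in \{0,1\}$ is nonzero only when $p^j$ "cuts through" the interval $(a, a+r]$, i.e.\ when some multiple of $p^j$ lies in that interval, which forces some $a+r'$ ($r' \in [1,r]$) to be divisible by $p^j$. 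Counting: if $J$ is the largest $j$ with a nonzero summand, then $p^J \mid a+r'$ for some $r'\in[1,r]$, and the total count $v_p\binom{a+r}{a} \le J - v_p(a)$ follows because every $j \le v_p(a)$ contributes $0$ (as $p^j \mid a$ and $p^j \mid$ both $\lfloor\cdot\rfloor$ terms cancel). This gives $v_p\binom{a+r}{a} \le \max_{r'\in[0,b]} v_p(a+r') - v_p(a)$, completing "$\leq$" and hence the lemma.
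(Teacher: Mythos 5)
Your proof is correct, but it takes a genuinely different route from the paper's. The paper clears denominators to the equivalent statement $\lcm\{a\binom{a+r}{a} \mid r \in [0,b]\} = \lcm[a,a+b]$ and inducts on $b$: the inductive step reduces to the two divisibilities $a\binom{a+b}{a} \mid \lcm[a,a+b]$ (imported wholesale from \cite[Lemma 4.12(1)]{FLT}) and $(a+b) \mid (a+b)\binom{a+b-1}{b} = a\binom{a+b}{a}$. You instead compare $p$-adic valuations directly; your ``$\leq$'' direction, via Legendre's formula and the observation that a nonzero summand $\lfloor (a+r)/p^j\rfloor - \lfloor a/p^j\rfloor - \lfloor r/p^j\rfloor$ forces a multiple of $p^j$ in $(a,a+r]$ while all $j \leq v_p(a)$ contribute $0$, is in effect a self-contained proof of the divisibility the paper only cites, and your ``$\geq$'' direction rests on the same identity $a\binom{a+r}{a} = (a+r)\binom{a+r-1}{a-1}$ that the paper uses. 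The paper's version is shorter because it outsources the hard half; yours is self-contained and exposes the prime-by-prime structure, at the cost of invoking Legendre. Your first sketched route through Kummer's theorem is indeed the fiddly one and is not completed, but the Legendre alternative you give is complete, so nothing is missing. One small slip: the identity $\binom{a+r^\ast}{a} = \tfrac{a+r^\ast}{r^\ast}\binom{a+r^\ast-1}{a-1}$ should read $\tfrac{a+r^\ast}{a}$ in place of $\tfrac{a+r^\ast}{r^\ast}$; this is harmless since the multiplied-out form you actually use is stated correctly.
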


\begin{proof}
Multiplying the required equality by $a$ throughout, we get the equivalent statement $\lcm\{ a\binom{a+r}{a} \mid r \in [0,b] \} = \lcm[a,a+b]$, which we shall prove by induction on $b$, with $b=0$ being trivial.
Assume thus $b>0$, and that $\lcm\{ a\binom{a+r}{a} \mid r \in [0, b-1] \} = \lcm[a,a+b-1]$.
Then $\lcm\{ a\binom{a+r}{a} \mid r \in [0, b] \} = \lcm \{ \lcm[a,a+b-1], a\binom{a+b}{a} \}$.
By \cite[Lemma 4.12(1)]{FLT}, we have $a\binom{a+b}{a} \mid \lcm[a,a+b]$, so that
$$\lcm \{ \lcm[a,a+b-1], a\tbinom{a+b}{a} \} \mid \lcm[a,a+b].$$
On the other hand, $(a+b) \mid (a+b) \binom{a+b-1}{b} = a\binom{a+b}{a}$, so that
$$
\lcm[a,a+b] = \lcm \{ \lcm[a,a+b-1], a+b\} \mid \lcm \{ \lcm[a,a+b-1], a\tbinom{a+b}{a} \}.$$  Thus
$$
\lcm\{ a\tbinom{a+r}{a} \mid r \in [0, b] \} = \lcm \{ \lcm[a,a+b-1], a\tbinom{a+b}{a} \} = \lcm [a,a+b],$$
as desired.
\end{proof}

\begin{cor} \label{cor:denominator-klm}
We have
$$
\dd_{\upa{(k,\ell)}{m}} = \frac{\lcm[k -\ell+1, k-\ell +1+ \min(\ell,m)]}{k-\ell+1}.
$$
\end{cor}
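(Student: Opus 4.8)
The plan is to read the denominator directly off the closed formula in Theorem~\ref{thm:klm}, combine it with the description of $\dd_{\s}$ in Lemma~\ref{lem:f_in_terms_of_e}(3), and then invoke the arithmetic identity of Lemma~\ref{lem:binomial}.

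First I would recall that, by Lemma~\ref{lem:f_in_terms_of_e}(3), $\dd_{\upa{(k,\ell)}{m}}$ is the least common multiple of the denominators (in lowest terms) of the coefficients $q_{\upa{(k,\ell)}{m},\s}$ as $\s$ ranges over $\STab((k+m,\ell))$. By Theorem~\ref{thm:klm} these coefficients vanish unless $\s \in \SSTab((k,\ell);m)$, in which case $q_{\upa{(k,\ell)}{m},\s} = 1/\binom{k-\ell+1+\wt(\s)}{\wt(\s)}$; since the numerator is $1$, this unit fraction is already in lowest terms and its denominator is exactly $\binom{k-\ell+1+\wt(\s)}{\wt(\s)}$.

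Next I would observe that the set of weights occurring among tableaux in $\SSTab((k,\ell);m)$ is precisely $[0,\min(\ell,m)]$: on one hand $\wt(\s) \in \BW^{\min(\ell,m)}_1 = [0,\min(\ell,m)]$ for every such $\s$ by Corollary~\ref{cor:sum-of-weight}(1) with $s=1$, and on the other hand every value $w \in [0,\min(\ell,m)]$ is attained, for instance by the tableau $\s^{k,\ell,m}_{w}$ constructed just before Theorem~\ref{thm:klm}. Hence
$$\dd_{\upa{(k,\ell)}{m}} = \lcm\left\{ \binom{k-\ell+1+w}{w} \;\middle|\; w \in [0,\min(\ell,m)] \right\} = \lcm\left\{ \binom{(k-\ell+1)+w}{k-\ell+1} \;\middle|\; w \in [0,\min(\ell,m)] \right\}.$$
Finally, applying Lemma~\ref{lem:binomial} with $a = k-\ell+1 \in \ZZ^+$ (valid since $k \geq \ell$) and $b = \min(\ell,m) \in \ZZ_{\geq 0}$ immediately gives $\dd_{\upa{(k,\ell)}{m}} = \lcm[k-\ell+1,\, k-\ell+1+\min(\ell,m)]/(k-\ell+1)$, as claimed.

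There is no substantive obstacle: the whole argument is a bookkeeping of what Theorem~\ref{thm:klm} already provides. The only points deserving a sentence of care are verifying that $1/\binom{k-\ell+1+w}{w}$ is in lowest terms (so its denominator really is the full binomial coefficient) and confirming that all weights in the range $[0,\min(\ell,m)]$ genuinely occur, so that the least common multiple is taken over exactly the index set required by Lemma~\ref{lem:binomial}.
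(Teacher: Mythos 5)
Your proposal is correct and follows exactly the route of the paper, which derives the corollary directly from Theorem \ref{thm:klm} and Lemma \ref{lem:binomial}; you have merely filled in the routine bookkeeping (that the unit fractions are in lowest terms, and that every weight in $[0,\min(\ell,m)]$ is realised, e.g.\ by $\s^{k,\ell,m}_{w}$) that the paper leaves implicit.
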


\begin{proof}
This follows from Theorem \ref{thm:klm} and Lemma \ref{lem:binomial} immediately.
\end{proof}

\begin{eg}
Let $k= 3$, $\ell = m = 2$.  The following is a complete list of $\s_w \in \SSTab((3,2);2)$ for each $w \in \BW_1^{\min\{ \ell, m \}} = [0,2]$:
\[
\begin{array}{c|ccc}
w & 0 & 1 & 2 \\ \\
\s_w &
\ytableausetup{notabloids}
\raisebox{1mm}{\begin{ytableau}
\scriptstyle 1 & \scriptstyle 2 & \scriptstyle 3 & \scriptstyle \textcolor{blue}{6} & \scriptstyle \textcolor{blue}{7} \\
\scriptstyle \textcolor{red}{4} & \scriptstyle \textcolor{red}{5}
\end{ytableau}}
&
\raisebox{1mm}{\begin{ytableau}
\scriptstyle 1 & \scriptstyle 2 & \scriptstyle 3 & \scriptstyle \textcolor{red}{5} & \scriptstyle \textcolor{blue}{7}  \\
\scriptstyle \textcolor{red}{4} & \scriptstyle \textcolor{blue}{6}
\end{ytableau}}
&
\raisebox{1mm}{\begin{ytableau}
\scriptstyle 1 & \scriptstyle 2 & \scriptstyle 3 & \scriptstyle \textcolor{red}{4} & \scriptstyle \textcolor{red}{5} \\
\scriptstyle \textcolor{blue}{6} & \scriptstyle \textcolor{blue}{7}
\end{ytableau}}
\end{array}
\]
By Theorem \ref{thm:klm}, $f_{\upa{(3,2)}{2}} = \sum_{\s \in \SSTab((5,2))} \frac{2}{(\wt(\s) + 2)(\wt(\s) + 1)} e_{\s}$.
Hence $$\dd_{\upa{(3,3)}{3}} = \lcm \{ \tfrac{(w+2)(w+1)}{2} \mid w \in [0,2] \} = \lcm \{1,3,6\} = 6 = \tfrac{\lcm [2,4]}{2}, $$ agreeing with Corollary \ref{cor:denominator-klm}.
\end{eg}

\begin{rem}
By \cite[Theorem 3.13]{FLT}, Corollary \ref{cor:denominator-klm} can be used to provide a simpler proof to
  \cite[Theorem 4.13]{FLT}.
\end{rem}

For the remainder of this section, write $\ba^{k,\ell}_{m,\bw}$ for $q_{\upa{(k,\ell^{s})}{m},\s_{\bw}}$ for each $\bw \in \BW^{\min(\ell,m)}_s$, so that
$$
f_{\upa{(k,\ell^s)}{m}} = \sum_{\bw \in \BW^{\min(\ell,m)}_s} \ba^{k,\ell}_{m,\bw} \Ew{\bw},
$$
where $\Ew{\bw} = \sum_{\substack{\s \in \SSTab(\kl;m) \\ \wt(\s) = \bw }} e_{\s}$,
by Theorem \ref{lem:semistandard} and Corollary \ref{cor:wt-controls-colour-type}.  Equivalently,
$$\DD{\upa{(k,\ell^{s})}{m}} = \sum_{\s \in \SSTab((k,\ell^{s});m)} \ba^{k,\ell}_{m,\wt(\s)} d(\s).$$
We shall provide four reduction results, relating $\ba^{k,\ell}_{m,\bw}$ to others with different parameters.

To state the first result, note that we have a natural left action of $\sym{s}$ on $\BW^{\min(\ell,m)}_s$ via place permutations: $\sigma \cdot (w_1,w_2,\dotsc, w_s) = (w_{\sigma^{-1}(1)}, w_{\sigma^{-1}(2)}, \dotsc, w_{\sigma^{-1}(s)})$ for all $\sigma \in \sym{s}$ and $(w_1,w_2,\dotsc, w_s) \in \BW^{\min(\ell,m)}_s$.

\begin{thm} \label{thm:4reductions}
Let $\bw\in \BW^{\min(\ell,m)}_s$.
We have:
\begin{enumerate}[itemsep=5pt]
\item $|\ba^{k,\ell}_{m,\bw}| = |\ba^{k,\ell}_{m,\sigma \cdot \bw}|$ for all $\sigma \in \sym{s}$;

\item $\ba^{k,\ell}_{m,(0,\bw)} = \ba^{k+1,\ell}_{m,\bw}$ if $s \geq \min(\ell,m)$;

\item $\ba^{k,\ell}_{m,\bw} = \ba^{k,\ell}_{m-1,\bw}$ if $m> \ell$;

\item $\ba^{k,\ell}_{m,\bw} = \ba^{k-1,\ell-1}_{m,\bw}$ if $\ell > m$.
\end{enumerate}
\end{thm}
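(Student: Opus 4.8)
\textbf{Plan for the proof of Theorem \ref{thm:4reductions}.}

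The four parts all follow the same philosophy: realise the coefficients $\ba^{k,\ell}_{m,\bw}$ inside a single seminormal basis vector, and then exploit either a symmetry of the underlying configuration of residues (for part (1)) or an embedding of one $\upa{\cdot}{\cdot}$-tableau inside another together with the reduction tools already established in Corollary \ref{cor:same-relative-positions} and Proposition \ref{prop:same-relative-positions} (for parts (2)--(4)).

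For part (1), the key observation is that the rows of length $\ell$ in $\kl=(k,\ell^s)$ are interchangeable: if $\sigma\in\sym s$ acts by place-permuting these $s$ rows, there is a permutation $w\in\sym n$ with $w\cdot\upa{\kl}{m}=\upa{\kl}{m}$ (since $\upa{\kl}{m}$ is unchanged under permuting equal-length rows, up to rearranging entries within rows) and, more to the point, $w$ carries the residue sequence of $\s_{\bw}$ to that of $\s_{\sigma\cdot\bw}$. I would apply Proposition \ref{prop:seminormal} and Lemma \ref{lem:semistandard} to compare the expansions of $f_{\upa{\kl}{m}}$ before and after conjugating by such a $w$; since $w$ fixes $\upa{\kl}{m}$ and since the recursion in Proposition \ref{prop:seminormal} depends only on the differences of residues, the coefficients must agree up to the signs introduced when some $\bs_i e_{\t}$ picks up a $-1$ via Proposition \ref{prop:Garnir}(1). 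This gives $|\ba^{k,\ell}_{m,\bw}|=|\ba^{k,\ell}_{m,\sigma\cdot\bw}|$. Alternatively, and perhaps more cleanly, one notes that $\upa{\kl}{m}$ lies in the same $W_I$-orbit structure used in Proposition \ref{prop:semistandard}, and that place-permuting the $\ell$-rows is realised by an element normalising $\sym{\nu_\lambda}$; the weight statistic transforms by $\sigma$ while the absolute value of the coefficient is invariant.

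For parts (2)--(4), I would use Corollary \ref{cor:same-relative-positions}(2) as the engine. For part (3) ($m>\ell$): $\upI{\lambda}{\lambda+(m)}$ with $\lambda=(k,\ell^s)$ is obtained from $\upI{\lambda}{\lambda+(m-1)}$ by adding one node at the end of the first row, and $\DD{\upa{\lambda}{m}}=\DD{\upa{\lambda}{m-1}}\DD{\upa{(k+m-1,\ell^s)}{1}}$; when $m>\ell$, the added node is far to the right and the argument mirrors the end of the proof of Theorem \ref{thm:klm}, where one tracks which products $d(\tilde\s)d(\uu)$ land on a given $\s_{\bw}$ — the condition $m>\ell$ forces the new entry $k+\ell+m$ (colour $c_2$) into the first row, so $\s_{\bw}$ for $\bw\in\BW^{\min(\ell,m)}_s=\BW^{\ell}_s$ can only arise from $\tilde\s=\s^{k,\ell,m-1}_{\bw}$ with the $\uu$-factor being the top tableau, giving $\ba^{k,\ell}_{m,\bw}=\ba^{k,\ell}_{m-1,\bw}$. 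For part (2) ($s\geq\min(\ell,m)$): here one embeds $(k,\ell^{s})$-configurations by adding a first row; the residue sequence of $\s_{(0,\bw)}$ in the $(k,\ell^s)$-setting matches (after a shift by $1$) that of $\s_{\bw}$ in the $(k+1,\ell^{s-1})$-setting in the relevant positions, because $w_1=0$ means no colour-$c_2$ integer sits in the first row and the first $\ell$-row behaves exactly like the added part; Proposition \ref{prop:same-relative-positions}(2) then transfers the coefficient. For part (4) ($\ell>m$): one uses that $\SSTab((k,\ell^s);m)$ with $\ell>m$ is controlled by the last column being irrelevant — deleting the last column of each $\ell$-row (and the first row's node count by one) gives a bijection onto $\SSTab((k-1,(\ell-1)^s);m)$ preserving weights, and the residue differences that drive Proposition \ref{prop:seminormal} are unchanged under this deletion since $\ell>m$ guarantees the deleted column carries no colour-$c_{j+1}$ entry in any $\ell$-row; hence the coefficients coincide. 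I would phrase this last one via Corollary \ref{cor:same-relative-positions} applied to the transpose, or directly via Proposition \ref{prop:same-relative-positions}(2) with an appropriate index set $I$ and shift $z$.

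The main obstacle I anticipate is part (4) (and to a lesser extent part (2)): making precise the claim that ``deleting a column does not change the relevant residue differences'' requires carefully choosing the set $I\subseteq[1,n-1]$ and the shift $z$ so that the hypotheses $\res_{\s}(i+1)-\res_{\s}(i)=\res_{\t}(i+z+1)-\res_{\t}(i+z)$ of Proposition \ref{prop:same-relative-positions}(2) hold for all $i\in I$, and then checking that $\DD{\upa{(k-1,(\ell-1)^s)}{m}}$, which a priori lies in $\QQ W_I$ for the parabolic $W_I$ attached to the smaller tableau, maps correctly. One must also verify that the bijection on semistandard tableaux interacts correctly with the weight statistic and with colour types, so that the identity of individual coefficients $\ba$ (not merely of the whole vector) follows; this is where the hypothesis $\ell>m$ (resp.\ $s\geq\min(\ell,m)$, $m>\ell$) is essential and must be used to rule out degenerate overlaps, exactly as in the case analysis at the end of the proof of Theorem \ref{thm:klm}.
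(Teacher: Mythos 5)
Your overall philosophy for parts (2) and (3) --- factorise $\DD{\upa{(k,\ell^s)}{m}}$ via Corollary \ref{cor:same-relative-positions}(2)/(3) and track which products of permutations land on a given $\s_{\bw}$ --- is indeed the paper's strategy, but your sketch omits the step that makes it work: one must show that the non-leading terms $q_{\t,\rr}e_{\rr}$ (with $\rr\rhd\t$) of the inner seminormal factor cannot contribute to the coefficient of $e_{\s_{\bw}}$. The paper does this via Lemma \ref{lem:t^lambda-uparrow^nu} and Proposition \ref{prop:Garnir}(2,3); for instance in part (3) every such contribution is supported on standard tableaux with $k+m+s\ell$ in the last row, whereas $m>\ell$ forces $k+m+s\ell$ into the first row of $\s_{\bw}$. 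You flag this issue at the end but do not resolve it, and some details are off (the entries added in passing from $m-1$ to $m$ are $k+s\ell+1,\dots,k+s\ell+m$ of colour $c_{s+2}$, not ``$k+\ell+m$ of colour $c_2$'').

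The genuine gaps are in parts (1) and (4). For (1): the row-swap $\theta_i=\prod_{j=1}^{\ell}(k+(i-1)\ell+j,\,k+i\ell+j)$ does not fix the tableaux in $\SSTab((k,\ell^s);m)$; it sends them to non-standard tableaux whose Garnir straightening produces many terms, so ``the coefficients agree up to signs picked up along the recursion'' is precisely the assertion to be proved, not an observation, and Proposition \ref{prop:semistandard} does not apply since $\theta_i$ does not fix $f_{\upa{(k,\ell^s)}{m}}$ (it scales it by $(-1)^{\ell}$). The paper's argument is that $\theta_i$ maps the weight component $\Ew{\bw}$ into the span of weight-$(\bs_i\cdot\bw)$ polytabloids by Proposition \ref{prop:Garnir}(3), so comparing components of the eigenvector equation gives $\theta_i\Ew{\bw}=c\,\Ew{\bs_i\cdot\bw}$ with $c=(-1)^{\ell}\ba_{\bs_i\cdot\bw}/\ba_{\bw}$; the crucial missing idea is that $\theta_i^2=1$ together with the integrality of $\theta_i\Ew{\bw}$ and $\theta_i\Ew{\bs_i\cdot\bw}$ forces $c=\pm1$ --- without it, $c$ is merely a nonzero rational. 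For (4): the proposed mechanism fails, because there is no single shift $z$ (nor a single parabolic $W_I$) for which Proposition \ref{prop:same-relative-positions}(2) relates $(k+m,\ell^s)$-tableaux to $(k-1+m,(\ell-1)^s)$-tableaux --- the entries to be deleted (one per row of length $\ell$) are interspersed throughout $[1,k+m+s\ell]$ --- and no transpose analogue of Corollary \ref{cor:same-relative-positions} is available in this row-based framework. The paper instead argues by induction on $s$, using the factorisation of $f_{\upa{(k,\ell^s)}{m}}$ into terms $e_{\s\uparrow^{\uu}}$, a linear projection onto polytabloids whose first column is $1,k+1,k+\ell+1,\dotsc$, and the delicate identity that this projection sends $e_{\t}$ to $e_{\Xi_{k,\ell,s}(\t)}$ for every row-standard $\t$ with that first column, proved by a careful choice of Garnir transversals. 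None of that machinery, nor the induction on $s$, appears in your plan, so part (4) as sketched would not go through.
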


\begin{proof} For each part we fix some of the parameters $k,\ell,m,s$, and will omit them in the notations to make the latter less cumbersome.
\begin{enumerate}
\item We fix $k,\ell,m,s$ and prove $|\ba_{\bw}| = |\ba_{\sigma\cdot \bw}|$ for all $\sigma \in \sym{s}$ and $\bw \in \BW$.  In fact, it suffices to prove this for $\sigma = \bs_i \in \sym{s}$.  Let $$\theta_i = \prod_{j=1}^\ell (k+(i-1)\ell+j, k+i\ell+j) \in \sym{[k+(i-1)\ell+1,\, k+(i+1)\ell]}.$$
Observe that the effect of $\theta_i$ on $\IT^{(k,\ell^s)}$ is to swap its $(i+1)$-th and $(i+2)$-th rows.
Thus,
$$
\theta_i f_{\IT^{(k,\ell^s)}} = \theta_i e_{\IT^{(k,\ell^s)}} = e_{\theta_i \cdot \IT^{(k,\ell^s)}} = (-1)^{\ell} e_{\IT^{(k,\ell^s)}} = (-1)^{\ell} f_{\IT^{(k,\ell^s)}},$$
where the third equality follows from \eqref{E:Garnir-1}.
This yields $$\theta_i f_{\upa{(k,\ell^s)}{m}} = (-1)^{\ell} f_{\upa{(k,\ell^s)}{m}}$$ by Corollary \ref{cor:same-relative-positions}(1).

On the other hand, $\theta_i f_{\upa{(k,\ell^s)}{m}} = \sum_{\bw \in \BW} \ba_{\bw} \theta_i \Ew{\bw}$.
For each $\bw =(w_1,\dotsc,w_s) \in \BW$, let $S^{\QQ}_{(k+m,\ell^s)}(\bw)$ be the $\QQ$-span of the polytabloids $e_{\s}$ which are labelled by $\s \in \STab((k+m,\ell^s))$ whose first row contains all the $k$ integers with colour $c_1$, and exactly $w_i$ integers with colour $c_{i+1}$ for all $i \in [1,s]$.
Then since $e_{\theta_i \cdot \s} \in S^{\QQ}_{(k+m,\ell^s)}(\bs_i \cdot \bw)$ for any $\s \in \SSTab((k,\ell^s);m)$ with $\wt(\s) = \bw$ by Proposition \ref{prop:Garnir}(3) (with $l=1$),
we have
$\theta_i \Ew{\bw} = \theta_i \sum e_{\s} = \sum e_{\theta_i \cdot \s} \in S^{\QQ}_{(k+m,\ell^s)}(\bs_i \cdot \bw)$.
Thus, comparing the $S^{\QQ}_{(k+m,\ell^s)}(\bs_i \cdot \bw)$-components on both sides of
$$
(-1)^\ell \sum_{\bw \in \BW} \ba_{\bw} \Ew{\bw} = (-1)^{\ell} f_{\upa{(k,\ell^s)}{m}} = \theta_i f_{\upa{(k,\ell^s)}{m}} = \sum_{\bw \in \BW} \ba_{\bw} \theta_i \Ew{\bw},
$$
we have $(-1)^{\ell} \ba_{\bs_i \cdot \bw} \Ew{\bs_i \cdot \bw} = \ba_{\bw} \theta_i \Ew{\bw}$.
In particular, $\ba_{\bs_i \cdot \bw} \ne 0$ if and only if $\ba_{\bw} \ne 0$.
Suppose that $\ba_{\bw} \ne 0$, and let $\ba_{i,\bw} = (-1)^\ell \frac{\ba_{\bs_i \cdot \bw}}{\ba_{\bw}}$.
Then $\ba_{i,\bw} \in \QQ \setminus \{0\}$, and $\theta_i \Ew{\bw} = \ba_{i,\bw} \Ew{\bs_i \cdot \bw}$.  Since $\theta_i^2 = 1$, we also have $ \theta_i \Ew{\bs_i \cdot \bw} = \frac{1}{\ba_{i,\bw}} \Ew{\bw}$.
But $\theta_i \Ew{\bw}, \theta_i \Ew{\bs_i\cdot \bw} \in S^{\ZZ}_{(k+m,\ell^s)}$, so this forces $\ba_{i,\bw}, \frac{1}{\ba_{i,\bw}} \in \ZZ$, i.e.\ $\ba_{i,\bw} = \pm 1$.  Hence, $|\ba_{\bw}| = |\ba_{\bs_i \cdot \bw}|$ as required.

\item
We fix $\ell, m$ and prove $\ba^{k}_{(0,\bw)} = \ba^{k+1}_{\bw}$ for all $\bw \in \BW_s$ when $s \geq \min(\ell,m)$; here $(0,\bw) = (0,w_1,\dotsc, w_s) \in \BW_{s+1}$ when $\bw = (w_1,\dotsc, w_s)$. Let
\begin{alignat*}{2}
\t &= \upI{(k,\ell)}{(k+m,\ell^{s+1})}, &\qquad \t' &= \IT^{(k+1+m,\ell^{s})},\\
\s &= \upa{(k,\ell^{s+1})}{m},& \qquad \s' &= \upa{(k+1,\ell^{s})}{m}.
\end{alignat*}
By Corollary \ref{cor:same-relative-positions}(3) (with $i=2$),
we have
$
f_{\s} = \DD{\s'}^{+(\ell-1)} f_{\t}.
$
By Lemma \ref{lem:f_in_terms_of_e}(1), we have $f_{\t} = e_{\t} + \sum_{\rr \rhd \t} q_{\t,\rr} e_{\rr}$.
Thus,
\begin{equation} \label{E:2nd-reduction}
f_{\s} = \DD{\s'}^{+(\ell-1)} f_{\t} = \DD{\s'}^{+(\ell-1)} e_{\t} + \sum_{\rr \rhd \t} q_{\t,\rr} \DD{\s'}^{+(\ell-1)} e_{\rr}.
\end{equation}
By Lemma \ref{lem:t^lambda-uparrow^nu}, for each $\rr \rhd \t$, we have $\Shape(\rr {\downarrow_{k+\ell}}) \rhd (k,\ell)$, so that $\rr$ contains $[1,k]$ and some integer $i_{\rr} \in [k+1,k+\ell]$ in its first row. %(\FM{because of "local maximality" of $\t$}).
Consequently, since for all $\tau \in \sym{[k+\ell+1,k+m+ (s+1)\ell]}$, $e_{\tau \cdot \rr}$ is spanned by polytabloids indexed by standard tableaux having the same first row as $\tau \cdot \rr$, which contains the integer $i_{\rr}$ with colour $c_2$, by Proposition \ref{prop:Garnir}(3) (with $l=1$), the same holds for $\DD{\s'}^{+(\ell-1)} e_{\rr}$ as $\DD{\s'}^{+(\ell-1)} \in \QQ \sym{[k+\ell+1,k+m+ (s+1)\ell]}$ by Lemma \ref{lem:f_in_terms_of_e}(2).
The upshot is that, in \eqref{E:2nd-reduction}, only $\DD{\s'}^{+(\ell-1)} e_{\t}$ contributes to the coefficient $\ba^k_{(0,\bw)}$ of $e_{\s^k_{(0,\bw)}}$ in $f_\s$.
Now,
$$
\DD{\s'}^{+(\ell-1)}e_{\t}
= \sum_{\uu \in \SSTab((k+1,\ell^s);m)} \ba^{k+1}_{\wt(\uu)} e_{d(\uu)^{+(\ell-1)} \cdot \t}.
$$
As $d(\uu)^{+(\ell-1)} \cdot \t$ is standard for all $\uu \in \SSTab((k+1,\ell^s);m)$ and the map $\uu \mapsto d(\uu)^{+(\ell-1)} \cdot \t$ is injective, the coefficient of $e_{\s^k_{(0,\bw)}}$ in $\DD{\s'}^{+(\ell-1)} e_{\t}$ is $\ba^{k+1}_{\wt(\uu)}$ where $d(\uu)^{+(\ell-1)} \cdot \t = \s^{k}_{(0,\bw)}$, which is precisely where $\uu = \s^{k+1}_{\bw}$. Thus $\ba^k_{(0,\bw)} = \ba^{k+1}_{\bw}$ as desired.

\item
We fix $k,\ell, s$ and prove $\ba_{m,\bw} = \ba_{m-1,\bw}$ for all $\bw \in \BW^{\ell}_s$ when $m >\ell$.
By Lemma \ref{lem:f_in_terms_of_e}(1,2) and Corollary \ref{cor:same-relative-positions}(2), $\DD{\upa{(k,\ell^s)}{m-1}} \in \QQ\sym{[k+1,k+m-1+s\ell]}$ and
\begin{align*}
f_{\upa{(k,\ell^s)}{m}}
&= \DD{\upa{(k,\ell^s)}{m-1}} f_{\upa{(k+m-1,\ell^s)}{1}} \\
&= \DD{\upa{(k,\ell^s)}{m-1}} (e_{\upa{(k+m-1,\ell^s)}{1}} + \sum_{\rr \rhd \upa{(k+m-1,\ell^s)}{1}} q_{\upa{(k+m-1,\ell^s)}{1},\rr} e_{\rr}).
\end{align*}
If $\rr \rhd \upa{(k+m-1,\ell^s)}{1}$, then $\Shape(\rr {\downarrow_{k+m-1+s\ell}}) \rhd (k+m-1,\ell^s)$
by Lemma \ref{lem:t^lambda-uparrow^nu}, so that $\Shape(\rr {\downarrow_{k+m-1+s\ell}}) = (k+m,\ell^{s-1},\ell-1)$. %k+m+s\ell$ must lie in the last row of $\rr$.
Thus, for any $\tau \in \sym{k+m-1+s\ell}$, we have $\Shape((\overline{\tau \cdot \rr}) {\downarrow_{k+m-1+s\ell}}) = (k+m,\ell^{s-1},\ell-1)$ so that $e_{\tau \cdot \rr}$ is spanned by standard $(k+m, \ell^s)$-polytabloids $e_{\uu}$ such that $\Shape(\uu {\downarrow_{k+m-1+s\ell}}) = (k+m,\ell^{s-1},\ell-1)$ by Proposition \ref{prop:Garnir}(2).
Consequently, since $\DD{\upa{(k,\ell^s)}{m-1}} \in \QQ\sym{k+m-1+s\ell}$, $\DD{\upa{(k,\ell^s)}{m-1}} e_{\rr}$ lies in the span of polytabloids indexed by standard tableaux in which $k+m+s\ell$ lie in their respective last row.
Since for all $\bw \in \BW^{\ell}_s$, $\s^{m}_{\bw}$ contains $k+m+s\ell$ in its first row for all $\bw \in \BW^\ell_s$, we see that $\DD{\upa{(k,\ell^s)}{m-1}} e_{\rr}$ does not contribute to the coefficient $\ba_{m,\bw}$ of $e_{\s^{m}_{\bw}}$ in $f_{\upa{(k,\ell^s)}{m}}$.
On the other hand,
\begin{align*}
\DD{\upa{(k,\ell^s)}{m-1}}\, e_{\upa{(k+m-1,\ell^s)}{1}}
&= \sum_{\tilde{\s} \in \SSTab((k,\ell^s),m-1)} \ba_{m-1,\wt(\tilde{\s})} e_{d(\tilde{\s}) \cdot \IT^{(k+m,\ell^s)}} \\
&= \sum_{\tilde{\s} \in \SSTab((k,\ell^s),m-1)} \ba_{m-1,\wt(\tilde{\s})} e_{\tilde{\s}\uparrow^{(k+m,\ell^s)}}.
\end{align*}
Clearly, $\tilde{\s} {\uparrow^{(k+m,\ell^s)}}$ is standard for all $\tilde{\s} \in \SSTab((k,\ell^s);m-1)$, and the map $\tilde{\s} \mapsto \tilde{\s} {\uparrow^{(k+m,\ell^s)}}$ is injective.
Furthermore, $\s^{m-1}_{\bw} {\uparrow^{(k+m,\ell^s)}} = \s^{m}_{\bw}$.
Thus $\ba_{m-1,\bw} = \ba_{m,\bw}$ just as in part (2).

\item
We fix $m$ and prove that $\ba^{k,\ell}_{\bw_s} = \ba^{k-1,\ell-1}_{\bw_s}$ for all $\bw_s \in \BW^{m}_s$ when $\ell > m$.

We prove by induction on $s$.  For $s=1$  we have $\ba^{k,\ell}_{(w)} = \frac{1}{\binom{k-\ell+w+1}{w}} =  \ba^{k-1,\ell-1}_{(w)}$ by Theorem \ref{thm:klm}, as desired.
Assume thus $s>1$.

By Corollary \ref{cor:same-relative-positions}(3) (with $i=2$),
\begin{alignat}{2}
f_{\upa{(k,\ell^s)}{m}} &
= \DD{\upa{(k+1,\ell^{s-1})}{m}}^{+(\ell-1)} \DD{\upa{(k,\ell)}{m}}\, e_{\IT^{(k+m,\ell^s)}} \notag \\
&= \sum_{\substack{\uu \in \SSTab((k+1,\ell^{s-1});m) \\ \s \in \SSTab((k,\ell);m)}} \ba^{k+1,\ell}_{\wt(\uu)} \, \ba^{k,\ell}_{\wt(\s)} \, d(\uu)^{+(\ell-1)} d(\s)\,  e_{\IT^{(k+m,\ell^s)}} \notag \\
&= \sum_{\substack{\uu \in \SSTab((k+1,\ell^{s-1});m) \\ \s \in \SSTab((k,\ell);m)}} \ba^{k+1,\ell}_{\wt(\uu)} \, \ba^{k,\ell}_{\wt(\s)} \, e_{\s \uparrow^\uu} \label{E:4th-reduction}
\end{alignat}
where $\s {\uparrow^{\uu}} = (d(\uu)^{+(\ell-1)} d(\s)) \cdot \IT^{(k+m,\ell^s)}$. (Note that \eqref{E:4th-reduction} holds for all $k,\ell,m,s \in \ZZ^+$ with $k \geq \ell$.)  Since $d(\s) \cdot \IT^{(k+m,\ell^s)} = \s {\uparrow^{(k+m,\ell^s)}}$, $\s {\uparrow^{\uu}}$ has
the following properties:

\begin{itemize}[topsep=3pt]
\item its first row contains $[1,k]$ and exactly $\wt(\s)$ integers in $[k+1,k+\ell]$;
\item its third and subsequent rows are exactly those of the second and subsequent rows of $\uu$ translated by $(\ell-1)$.
\end{itemize}

Let $\mathfrak{T}_{k,\ell,s} = \{ \t \in \RSTab((k+m,\ell^s)) \mid \t(i+1,1) = k+(i-1)\ell +1\ \forall i \in [1,s] \}$, and let $\mathfrak{T}^{\STab}_{k,\ell,s} = \mathfrak{T}_{k,\ell,s} \cap \STab((k+m,\ell^s))$ and $\mathfrak{T}^{\SSTab}_{k,\ell,s}  = \mathfrak{T}_{k,\ell,s}\cap \SSTab((k,\ell^s);m)$.
Since $\ell > m$, we have $\s^{k,\ell}_{\bw_s} \in \mathfrak{T}^{\SSTab}_{k,\ell,s}$ for all $\bw_s \in \BW^m_s$.

Let
$$
\xi_{k,\ell,s} : [2,k+s\ell+m]\setminus \{ k+(i-1)\ell +1 \mid i \in [1,s] \}  \to [1,k-1+s(\ell-1) + m]
$$
denote the order-preserving bijection between these two subsets of $\ZZ^+$ with the same cardinality.
We have an injection
$\Xi_{k,\ell,s} : \mathfrak{T}_{k,\ell,s} \to \RSTab((k-1+m,(\ell-1)^s))$ defined by
$\Xi_{k,\ell,s}(\s) (i,j) = \xi_{k,\ell,s}(\s(i,j+1))$ for all $(i,j) \in [(k-1+m,(\ell-1)^s)]$,
that preserves standardness, colour-semistandardness and weight (where $[k+(i-1)\ell+1,k+i\ell]$ for the tableaux in $\mathfrak{T}_{k,\ell,s}$  and $[k+(i-1)(\ell-1), k-1+ i(\ell -1)]$ for the tableaux in $\RSTab((k-1+m,(\ell-1)^s))$ are coloured $c_{i+1}$).
Note that for $\bw_s \in \BW^m_s$, we have $\Xi_{k,\ell,s} ( \s^{k,\ell}_{\bw_s}) = \s^{k-1,\ell-1}_{\bw_s}$.

Let $\pi_{\mathfrak{T}^{\STab}_{k,\ell,s}} : S_{(k+m,\ell^s)}^\QQ \to S_{(k-1+m,(\ell-1)^s)}^\QQ$ be the $\QQ$-linear map defined by
$$
e_{\s} \mapsto
\begin{cases}
e_{\Xi_{k,\ell,s}(\s)}, &\text{if } \s \in \mathfrak{T}^{\STab}_{k,\ell,s}; \\
0, &\text{otherwise}
\end{cases}
$$
for all $\t \in \STab((k+m,\ell^s))$.
We claim that $\pi_{\mathfrak{T}^{\STab}_{k,\ell,s}}(e_{\t}) = e_{\Xi_{k,\ell,s}(\t)}$ for all $\t \in \mathfrak{T}_{k,\ell,s}$, which is of course the definition of $\pi_{\mathfrak{T}^{\STab}_{k,\ell,s}}$ if $\t$ is standard.
When $\t \in \mathfrak{T}_{k,\ell,s}$ is not standard, then there exist $(i,j), (i+1,j) \in [(k+m,\ell^s)]$ such that $\t(i,j) > \t(i+1,j)$.
Let $X = \{ \t(i, a) \mid a \in [j,\ell] \}$ and $Y = \{ \t(i+1,b) \mid b \in [1,j] \}$.
Then by Proposition \ref{prop:Garnir}(1), we have
$$
e_{\t} = - \sum_{\sigma \in G_{X,Y} \setminus \{1\}} e_{\sigma \cdot \t}.
$$
Let $y = \t(i+1,1)$, and let $Y' = Y \setminus \{ y \}$.  We may choose $G_{X,Y}$ so that it contains $G_{X,Y'}$.
If $\sigma \in G_{X,Y}$ and $\sigma^{-1}(y) \notin X$, then $y' := \sigma^{-1}(y) \in Y$, and so $\sigma (y,y') \in \sym{X \cup Y'}$.
Thus $\sigma (y,y') = \sigma'\tau$ for some $\sigma' \in G_{X,Y'}$ and $\tau \in \sym{X}\sym{Y'}$; in particular $\sigma\sym{X}\sym{Y} = \sigma' \sym{X}\sym{Y}$ and hence $\sigma = \sigma' \in G_{X,Y'}$.
Therefore, if $\sigma \in G_{X,Y} \setminus G_{X,Y'}$, we have $\sigma^{-1}(y) \in X$ and so $\sigma \cdot \t$ contains $y = \t(i+1,1) =  k+(i-1)\ell+1$ in its $i$-th row.  Consequently, $\pi_{\mathfrak{T}^{\STab}_{k,\ell,s}} (e_{\sigma \cdot \t}) = 0$ by Proposition \ref{prop:Garnir}(2), since $\s \ntrianglerighteq \overline{\sigma \cdot \t}$ for all $\s \in \mathfrak{T}_{k,\ell,s}^{\STab}$.
Thus,
$$
\pi_{\mathfrak{T}^{\STab}_{k,\ell,s}}(e_{\t}) = - \sum_{\sigma \in G_{X,Y'} \setminus \{1\}} \pi_{\mathfrak{T}^{\STab}_{k,\ell,s}}(e_{\sigma \cdot \t}),
$$
and since $\t \lhd \overline{\sigma \cdot \t} \in \mathfrak{T}_{k,\ell,s}$ for all $\sigma \in G_{X,Y'} \setminus \{1\}$ by Proposition \ref{prop:Garnir}(1), we conclude by induction that
$$
\pi_{\mathfrak{T}^{\STab}_{k,\ell,s}}(e_{\t}) = - \sum_{\sigma \in G_{X',Y} \setminus \{1\}} e_{\Xi_{k,\ell,s}(\overline{\sigma \cdot \t})}.
$$
Now note that:
\begin{itemize}[topsep=3pt, itemsep=3pt]
\item $\Xi_{k,\ell,s}(\overline{\sigma \cdot \t}) = \overline{\xi\sigma\xi^{-1} \cdot \Xi_{k,\ell,s}(\t)}$;
\item $\sigma \in G_{X,Y'}$ if and only if $\xi\sigma\xi^{-1} \in \xi G_{X,Y'} \xi^{-1}$ and we may choose $G_{\xi(X),\xi(Y')}$ to be $\xi G_{X,Y'}\xi^{-1}$;
\item $\xi(Y') = \{ \xi(\t(i+1,b)) \mid b \in [2,j] \} = \{ \Xi_{k,\ell, s}(\t)(i+1,b) \mid b \in [1,j-1] \}$, and similarly, $\xi(X) = \{ \Xi_{k,\ell,s}(\t)(i, a) \mid a \in [j-1,\ell-1] \}$.
\end{itemize}
Thus,
$$
\pi_{\mathfrak{T}^{\STab}_{k,\ell,s}}(e_{\t}) = -\sum_{\tau \in G_{\xi(X'), \xi(Y) \setminus \{1\}}} e_{\tau \cdot\, \Xi_{k,\ell,s}(\t)}
= e_{\Xi_{k,\ell,s}(\t)}
$$
by Proposition \ref{prop:Garnir}(1), establishing our claim.

We now investigate $\pi_{\mathfrak{T}_{\STab}}(e_{\s \uparrow^{\uu}})$ where $\s \in \SSTab((k,\ell);m)$ and $\uu \in \SSTab((k+1, \ell^{s-1});m)$.
From the properties of $\s {\uparrow^{\uu}}$ listed above, we see that if $\overline{\s {\uparrow^{\uu}}} \notin \mathfrak{T}_{k,\ell,s}$, then $k+(i-1)\ell+1$ lies above its $(i+1)$-th row for some $i \in [1,s]$ (by Lemma \ref{lem:easy-lem-for-semistd} applied to $\uu$), and hence $\t \ntrianglerighteq \overline{\s {\uparrow^{\uu}}} $ for all $\t \in \mathfrak{T}^{\STab}_{k,\ell,s}$, so that $\pi_{\mathfrak{T}^{\STab}_{k,\ell,s}}(e_{\s {\uparrow^{\uu}}}) = 0$ by Proposition \ref{prop:Garnir}(2).
On the other hand, if $\overline{\s {\uparrow^{\uu}}} \in \mathfrak{T}_{k,\ell,s}$, we have, by the claim in the last paragraph, that $\pi_{\mathfrak{T}^{\STab}_{k,\ell,s}}(e_{\s \uparrow^{\uu}}) = e_{\Xi_{k,\ell,s}(\overline{\s \uparrow^{\uu}})}$.
Now note that:
\begin{itemize}[topsep=3pt, itemsep=3pt]
\item for $\s \in \SSTab((k,\ell);m)$ and $\uu \in \SSTab((k+1,\ell^{s-1});m)$, we have $\overline{\s {\uparrow^{\uu}}} \in \mathfrak{T}_{k,\ell,s}$ if and only if $\s \in \mathfrak{T}_{k,\ell,1}$ and $\uu \in \mathfrak{T}_{k+1,\ell,s-1}$, in which case $\Xi_{k,\ell,s}(\overline{\s {\uparrow^{\uu}}}) = \overline{\Xi_{k,\ell,1}(\s) {\uparrow^{\Xi_{k+1,\ell,s-1}(\uu)}}}$;
\item $\Xi_{k,\ell,1}$ maps $\mathfrak{T}^{\SSTab}_{k,\ell,1}$ bijectively onto $\SSTab((k-1,\ell-1);m)$, and is weight-preserving;
\item $\Xi_{k+1,\ell, s-1}$ maps $\mathfrak{T}^{\SSTab}_{k+1,\ell,s-1}$ bijectively onto $\SSTab((k,(\ell-1)^{s-1});m)$, and is weight-preserving.
\end{itemize}

Thus,
\begin{align*}
\pi_{\mathfrak{T}^{\STab}_{k,\ell,s}}(f_{\upa{(k,\ell^s)}{m}})
&= \sum_{\substack{\uu \in \SSTab((k+1,\ell^{s-1});m) \\ \s \in \SSTab((k,\ell);m)}} \ba^{k+1, \ell}_{\wt(\uu)} \ba^{k,\ell}_{\wt(\s)} \pi_{\mathfrak{T}^{\STab}_{k,\ell,s}} (e_{\s \uparrow^{\uu}}) \\
&= \sum_{\substack{\uu \in \mathfrak{T}^{\SSTab}_{k+1,\ell, s-1} \\ \s \in \mathfrak{T}^{\SSTab}_{k,\ell, 1}}} \ba^{k+1, \ell}_{\wt(\uu)} \ba^{k,\ell}_{\wt(\s)} \, e_{\Xi_{k,\ell,1}(\s) \uparrow^{\Xi_{k+1,\ell,s-1}(\uu)}} \\
&= \sum_{\substack{\uu' \in \SSTab((k,(\ell-1)^{s-1});m) \\ \s' \in \SSTab((k-1,\ell-1);m)}} \ba^{k+1, \ell}_{\wt(\uu')} \ba^{k,\ell}_{\wt(\s')} \, e_{\s' \uparrow^{\uu'}} \\
&= \sum_{\substack{\uu' \in \SSTab((k,(\ell-1)^{s-1});m) \\ \s' \in \SSTab((k-1,\ell-1);m)}} \ba^{k, \ell-1}_{\wt(\uu')} \ba^{k-1, \ell-1}_{\wt(\s')}\, e_{\s' \uparrow^{\uu'}} \\
&= f_{\upa{(k-1,(\ell-1)^s)}{m}} = \sum_{\vv \in \SSTab((k-1,(\ell-1)^s);m)} \ba^{k-1,\ell-1}_{\wt(\vv)}\, e_{\vv},
\end{align*}
where the fourth and fifth equalities follow from induction hypothesis and \eqref{E:4th-reduction} respectively.
But we also have $f_{\upa{(k,\ell^s)}{m}} = \sum_{\t \in \SSTab((k,\ell^s);m)} \ba^{k,\ell}_{\wt(\t)}\, e_{\t}$, so that
$$
\pi_{\mathfrak{T}^{\STab}_{k,\ell,s}}(f_{\upa{(k,\ell^s)}{m}}) = \sum_{\t \in \mathfrak{T}^{\SSTab}_{k,\ell, s}} \ba^{k,\ell}_{\wt(\t)}\, e_{\Xi_{k,\ell,s}(\t)}.$$
Comparing the coefficient of $e_{\s^{k-1,\ell-1}_{\bw_s}}$ in $\pi_{\mathfrak{T}^{\STab}_{k,\ell,s}}(f_{\upa{(k,\ell^s)}{m}})$ for each $\bw_s \in \BW_s$, we get $\ba^{k,\ell}_{\bw_s} = \ba^{k-1,\ell-1}_{\bw_s}$ as desired, and our proof is complete.

\end{enumerate}
\end{proof}

In the next result, $\mathbf{e}_j$, for $j \in [1,s]$, denotes the $j$-th standard basis vector for $\ZZ^s$. Note also that $\BW^1_s = \{ \mathbf{0} \} \cup \{ \mathbf{e}_j \mid j \in [1,s] \}$.

\begin{cor} \label{cor:denominator-ell=1}
We have
$$
f_{\upa{(k,1^s)}{m}} = e_{\upa{(k,1^s)}{m}} + \frac{1}{k+s} \sum_{\substack{\s \in \SSTab((k,1^s);m) \setminus \{ \upa{(k,1^s)}{m} \}}} (-1)^{s-|\wt(\s)|} e_{\s},$$
where $|\wt(\s)| = j$ if $\wt(\s) = \mathbf{e}_j$.

In particular, $\dd_{\upa{(k,1^s)}{m}} = k+s$.
\end{cor}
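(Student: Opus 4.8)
The plan is to reduce to the case $m=1$, invoke Theorem~\ref{thm:mu=(1)}, and then read off the denominator. Write $\lambda = (k,1^s)$, which has $r = s+1$ rows. Since $\ell = 1$, we have $\min(\ell,m) = 1$ for every $m \in \ZZ^+$, so the index set $\BW^{\min(\ell,m)}_s = \BW^1_s = \{\mathbf{0}\} \cup \{\mathbf{e}_j \mid j \in [1,s]\}$ does not depend on $m$. For $m \geq 2$ we have $m > \ell$, so Theorem~\ref{thm:4reductions}(3) gives $\ba^{k,1}_{m,\bw} = \ba^{k,1}_{m-1,\bw}$ for all $\bw \in \BW^1_s$; iterating, $\ba^{k,1}_{m,\bw} = \ba^{k,1}_{1,\bw}$ for every $m \in \ZZ^+$. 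It therefore suffices to determine the coefficients $\ba^{k,1}_{1,\bw} = q_{\upa{\lambda}{1},\s_{\bw}}$, and these are computed by Theorem~\ref{thm:mu=(1)}.

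Next I would analyse $\SSTab(\lambda;1)$. If $\s \in \SSTab(\lambda;1)$ and $\s \neq \upa{\lambda}{1}$, then $Q(\s) \neq \varnothing$; since $\lambda_2 = \dotsb = \lambda_{s+1} = 1$, Lemma~\ref{lem:Q} forces $Q(\s) = \{j,j+1,\dotsc,s+1\}$ for some $j \in [2,s+1]$. Hence $Q(\s)_a \neq \varnothing$ only for $a = 1$, so $P(\s) = \{j\}$ and $\max(Q(\s)_{\lambda_j}) = \max Q(\s) = s+1$, while $|Q(\s)| - |P(\s)| = (s - j + 2) - 1 = s - j + 1$. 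Theorem~\ref{thm:mu=(1)} then gives
$$
q_{\upa{\lambda}{1},\s} = (-1)^{s-j+1}\,\frac{1}{\lambda_1 - \lambda_j + (s+1)} = \frac{(-1)^{s-j+1}}{k+s}
$$
(uniformly, including when $k=1$). By Proposition~\ref{prop:semistd-colour-type}(1), the first row of $\s$ contains, apart from all integers of colour $c_1$, exactly one integer of colour $c_j$; thus $\wt(\s) = \mathbf{e}_{j-1}$, so $|\wt(\s)| = j-1$ and $q_{\upa{\lambda}{1},\s} = (-1)^{s-|\wt(\s)|}/(k+s)$. For the remaining tableau, $\wt(\s) = \mathbf{0}$ forces $\s = \upa{\lambda}{1}$ by Corollary~\ref{cor:sum-of-weight} (so $\Ew{\mathbf{0}} = e_{\upa{\lambda}{1}}$, and after passing to general $m$ via the reduction above, $\Ew{\mathbf{0}} = e_{\upa{(k,1^s)}{m}}$), and $q_{\upa{\lambda}{1},\upa{\lambda}{1}} = 1$ by Lemma~\ref{lem:f_in_terms_of_e}(1).

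Assembling these: since $f_{\upa{(k,1^s)}{m}} = \sum_{\bw \in \BW^1_s} \ba^{k,1}_{m,\bw}\,\Ew{\bw}$ with $\ba^{k,1}_{m,\mathbf{0}} = 1$ and $\ba^{k,1}_{m,\mathbf{e}_j} = (-1)^{s-j}/(k+s)$, and since $\bigsqcup_{j=1}^s \{ \s \in \SSTab((k,1^s);m) \mid \wt(\s) = \mathbf{e}_j \} = \SSTab((k,1^s);m) \setminus \{ \upa{(k,1^s)}{m} \}$ with all $e_\s$ in one such class carrying the same coefficient (Lemma~\ref{lem:semistandard}(1) and Corollary~\ref{cor:wt-controls-colour-type}), we obtain the stated formula for $f_{\upa{(k,1^s)}{m}}$. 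Finally, Lemma~\ref{lem:f_in_terms_of_e}(3) identifies $\dd_{\upa{(k,1^s)}{m}}$ with the least common multiple of the denominators of the coefficients $q_{\upa{(k,1^s)}{m},\vv}$, which are all either $1$ or $k+s$; since $k + s \geq 2$ and $\s_{\mathbf{e}_1} \in \SSTab((k,1^s);m)$ (as $1 = \min(\ell,m)$), at least one coefficient has denominator exactly $k+s$, whence $\dd_{\upa{(k,1^s)}{m}} = k+s$.

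The only step requiring genuine care is the combinatorial bookkeeping in the second paragraph: unwinding the formula of Theorem~\ref{thm:mu=(1)} for $\lambda = (k,1^s)$ — pinning down $Q(\s)$, $P(\s)$ and the arithmetic factor $\lambda_1 - \lambda_j + \max(Q(\s)_{\lambda_j})$ — and matching the resulting sign and value against the weight datum $|\wt(\s)|$ via Proposition~\ref{prop:semistd-colour-type}. Everything else is a direct substitution into results already established.
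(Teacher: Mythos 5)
Your proposal is correct and follows essentially the same route as the paper: reduce to $m=1$ via Theorem \ref{thm:4reductions}(3), then compute $\ba^{k,1}_{1,\mathbf{e}_j}$ from Theorem \ref{thm:mu=(1)} by identifying $Q(\s)$ and $P(\s)$ for $\lambda=(k,1^s)$. Your bookkeeping (with $Q(\s)=[j,s+1]$ for $j=\min Q(\s)$, giving sign $(-1)^{s-j+1}=(-1)^{s-|\wt(\s)|}$ and factor $\tfrac{1}{k+s}$) matches the paper's, which records the same data as $Q(\s^{k,1,1}_{\mathbf{e}_j})=[j+1,s+1]$.
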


\begin{proof}
By Theorem \ref{thm:4reductions}(3), $\ba^{k,1}_{m,\bw_s} = \ba^{k,1}_{1,\bw_s}$.  By Theorem \ref{thm:mu=(1)}, $\ba^{k,1}_{1,\mathbf{e}_j} = (-1)^{s-j} \frac{1}{k+s}$ for all $i \in [1,s]$ (since $Q(\s^{k,1,1}_{\mathbf{e}_j}) = [j+1,s+1]$) and $\ba^{k,1}_{1,\mathbf{0}} = 1$.  The corollary thus follows.
\end{proof}

\begin{eg}
Let $k= s = 3$, $m = 2$.  The following is a complete list of $\s_{\bw} \in \SSTab((3,1^3);2)$ for each $\bw \in \BW_3^{1} = \{ (0,0,0), (1,0,0), (0,1,0), (0,0,1) \}$:
\[
\begin{array}{c|cccc}
\bw & (0,0,0) & (1,0,0) & (0,1,0) & (0,0,1) \\ \\
\s_{\bw} &
\ytableausetup{notabloids}
\begin{matrix}
\begin{ytableau}
\scriptstyle 1 & \scriptstyle 2 & \scriptstyle 3 & \scriptstyle \textcolor{violet}{7} & \scriptstyle \textcolor{violet}{8} \\
\scriptstyle \textcolor{red}{4} \\
\scriptstyle \textcolor{blue}{5} \\
\scriptstyle \textcolor{teal}{6}
\end{ytableau} \\
\s_0
\end{matrix}
&
\begin{matrix}
\begin{ytableau}
\scriptstyle 1 & \scriptstyle 2 & \scriptstyle 3 & \scriptstyle \textcolor{red}{4} & \scriptstyle \textcolor{violet}{8} \\
\scriptstyle \textcolor{blue}{5} \\
\scriptstyle \textcolor{teal}{6} \\
\scriptstyle \textcolor{violet}{7}
\end{ytableau} \\
\s_1
\end{matrix}
&
\begin{matrix}
\begin{ytableau}
\scriptstyle 1 & \scriptstyle 2 & \scriptstyle 3 & \scriptstyle \textcolor{blue}{5} & \scriptstyle \textcolor{violet}{8} \\
\scriptstyle \textcolor{red}{4} \\
\scriptstyle \textcolor{teal}{6}\\
\scriptstyle \textcolor{violet}{7}
\end{ytableau} \\
\s_2
\end{matrix}
&
\begin{matrix}
\begin{ytableau}
\scriptstyle 1 & \scriptstyle 2 & \scriptstyle 3 & \scriptstyle \textcolor{teal}{6} & \scriptstyle \textcolor{violet}{8} \\
\scriptstyle \textcolor{red}{4} \\
\scriptstyle \textcolor{blue}{5} \\
\scriptstyle \textcolor{violet}{7}
\end{ytableau} \\
\s_3
\end{matrix}
\end{array}
\]
For each $i \in [1,3]$, let $\s_i' = (7,8) \cdot \s_i$, so that $\s_i'$ is the only other standard $ (5,1^3)$-tableau having the same weight (or colour type) as $\s_i$.
By Corollary \ref{cor:denominator-ell=1},
\[
f_{\s_0} = e_{\s_0} + \tfrac{1}{6} e_{\s_1} + \tfrac{1}{6} e_{\s'_1} - \tfrac{1}{6} e_{\s_2} - \tfrac{1}{6} e_{\s'_2} + \tfrac{1}{6} e_{\s_3} + \tfrac{1}{6} e_{\s'_3}
\quad \text{ and } \quad \dd_{\s_0} = 6.
\]
 \end{eg}

\begin{rem} \hfill
\begin{enumerate}
\item Using Corollary \ref{cor:denominator-ell=1} and \cite[Theorem 3.13]{FLT}, we get $\theta_{(k,1^s),(m)} = (k-1)!m!s!$, generalising $\theta_{(1^n),(m)} = (n-1)!m!$ as obtained in \cite[Proposition 4.1]{FLT}.
\item Corollary \ref{cor:denominator-ell=1} shows that it is possible for $\ba^{k,\ell}_{m,\bw} = - \ba^{k,\ell}_{m,\sigma \cdot \bw}$ (cf.\ Theorem \ref{thm:4reductions}(1)).
\end{enumerate}
\end{rem}

\begin{cor} \label{cor:reduction-denominator-k-ell^s-m} Let
\begin{gather*}
\tilde{k} = k-\ell+\max(s,\min(\ell,m)), \quad
\tilde{\ell} = \min(\ell,m),  \quad \tilde{s} = \min(\ell,m,s), \\
\overline{\BW}^{\tilde{\ell}}_s = \{ (w_1,\dotsc, w_s) \in {\BW}^{\tilde{\ell}}_s \mid w_1 \leq \dotsb \leq w_s \}.
\end{gather*}
Then $\tilde{k} \geq \tilde{\ell} \geq \tilde{s}$, and
\begin{align*}
\dd_{\upa{(k,\ell^s)}{m}}
&= \min\{ \kappa \in \ZZ^+ \mid \kappa\, \ba^{k,\ell}_{m,\bw} \in \ZZ,\ \forall \bw \in \overline{\BW}^{\tilde{\ell}}_s \}
= \dd_{\upa{(\tilde{k},\tilde{\ell}^{\tilde{s}})}{\tilde{\ell}}}.
\end{align*}
\end{cor}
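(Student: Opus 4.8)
The plan is to read off $\dd_{\upa{(k,\ell^s)}{m}}$ from the coefficients $\ba^{k,\ell}_{m,\bw}$ and then transport these coefficients through the four reductions of Theorem~\ref{thm:4reductions} until the parameters become $(\tilde{k},\tilde{\ell}^{\tilde{s}})$ with the $m$-part equal to $\tilde{\ell}$. Since $f_{\upa{(k,\ell^s)}{m}} = \sum_{\bw\in\BW} \ba^{k,\ell}_{m,\bw}\,\Ew{\bw}$ with each $\Ew{\bw}$ a sum of distinct standard polytabloids and the supports of $\Ew{\bw}$ and $\Ew{\bw'}$ disjoint for $\bw\neq\bw'$ (a tableau has a unique weight), Lemmas~\ref{lem:f_in_terms_of_e}(3) and \ref{lem:semistandard}(2) give $\dd_{\upa{(k,\ell^s)}{m}} = \min\{\kappa\in\ZZ^+ \mid \kappa\,\ba^{k,\ell}_{m,\bw}\in\ZZ\ \forall\bw\in\BW\}$, where one uses that each $\bw\in\BW$ is the weight of $\s_{\bw}\in\SSTab(\kl;m)$. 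By Theorem~\ref{thm:4reductions}(1), the denominator of $\ba^{k,\ell}_{m,\bw}$ depends only on the $\sym{s}$-orbit of $\bw$ under place permutation, and, since $\tilde{\ell}=\min(\ell,m)$, each such orbit meets $\overline{\BW}^{\tilde{\ell}}_s$ exactly in its unique non-decreasing member; hence the minimum over $\BW$ coincides with the minimum over $\overline{\BW}^{\tilde{\ell}}_s$, which is the first displayed equality.

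The inequalities $\tilde{k}\geq\tilde{\ell}\geq\tilde{s}$ are immediate: $\tilde{\ell}=\min(\ell,m)\geq\min(\ell,m,s)=\tilde{s}$, and $\tilde{k}=k-\ell+\max(s,\min(\ell,m))\geq k-\ell+\min(\ell,m)\geq\min(\ell,m)=\tilde{\ell}$ because $k\geq\ell$. For the second equality I would first reduce to the case $m=\tilde{\ell}$: when $m>\ell$, iterating Theorem~\ref{thm:4reductions}(3) gives $\ba^{k,\ell}_{m,\bw}=\ba^{k,\ell}_{\ell,\bw}$ for all $\bw$; when $m<\ell$, iterating Theorem~\ref{thm:4reductions}(4) (each step keeping the first parameter at least the second) gives $\ba^{k,\ell}_{m,\bw}=\ba^{k-\ell+m,m}_{m,\bw}$. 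In every case $\ba^{k,\ell}_{m,\bw}=\ba^{k^*,\tilde{\ell}}_{\tilde{\ell},\bw}$ for all $\bw$, where $k^*:=k-\ell+\tilde{\ell}\geq\tilde{\ell}$.

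It then remains to prove $\dd_{\upa{(k^*,\tilde{\ell}^s)}{\tilde{\ell}}}=\dd_{\upa{(\tilde{k},\tilde{\ell}^{\tilde{s}})}{\tilde{\ell}}}$, using the first equality (now established for all parameters) applied to $(k^*,\tilde{\ell},\tilde{\ell},s)$ to rewrite the left-hand side as $\min\{\kappa \mid \kappa\,\ba^{k^*,\tilde{\ell}}_{\tilde{\ell},\bw}\in\ZZ\ \forall\bw\in\overline{\BW}^{\tilde{\ell}}_s\}$. If $s\leq\tilde{\ell}$ then $\tilde{k}=k^*$ and $\tilde{s}=s$, so there is nothing to do. If $s>\tilde{\ell}$ then $\tilde{s}=\tilde{\ell}$ and $\tilde{k}=k^*+(s-\tilde{\ell})$; each $\bw\in\overline{\BW}^{\tilde{\ell}}_s$ has at most $\tilde{\ell}$ nonzero entries, hence at least $s-\tilde{\ell}$ leading zeros, and I would strip these one at a time by repeated application of Theorem~\ref{thm:4reductions}(2) --- legitimate since throughout the $s-\tilde{\ell}$ steps the parameters are $(k^*+j,\tilde{\ell},\tilde{\ell},s-j)$ with $s-j\geq\tilde{\ell}=\min(\tilde{\ell},\tilde{\ell})$ --- obtaining $\ba^{k^*,\tilde{\ell}}_{\tilde{\ell},\bw}=\ba^{\tilde{k},\tilde{\ell}}_{\tilde{\ell},\bw^{\flat}}$, where $\bw^{\flat}\in\overline{\BW}^{\tilde{\ell}}_{\tilde{\ell}}$ is $\bw$ with $s-\tilde{\ell}$ leading zeros removed. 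Since $\bw\mapsto\bw^{\flat}$ maps $\overline{\BW}^{\tilde{\ell}}_s$ onto $\overline{\BW}^{\tilde{\ell}}_{\tilde{\ell}}$ (prepend $s-\tilde{\ell}$ zeros to invert it), the sets $\{\ba^{k^*,\tilde{\ell}}_{\tilde{\ell},\bw}\mid\bw\in\overline{\BW}^{\tilde{\ell}}_s\}$ and $\{\ba^{\tilde{k},\tilde{\ell}}_{\tilde{\ell},\bw'}\mid\bw'\in\overline{\BW}^{\tilde{\ell}}_{\tilde{\ell}}\}$ coincide, and applying the first equality once more to the parameters $(\tilde{k},\tilde{\ell},\tilde{\ell},\tilde{\ell})$ identifies their common minimum with $\dd_{\upa{(\tilde{k},\tilde{\ell}^{\tilde{\ell}})}{\tilde{\ell}}}=\dd_{\upa{(\tilde{k},\tilde{\ell}^{\tilde{s}})}{\tilde{\ell}}}$.

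The main obstacle is the bookkeeping around this last step: Theorem~\ref{thm:4reductions}(2) removes a leading zero only as long as the shortened weight retains at least $\min(\ell,m)$ parts, so $s$ cannot be driven below $\min(\ell,m)$ --- this is precisely why $\tilde{s}=\min(\ell,m,s)$ and why $\tilde{k}$ carries $\max(s,\min(\ell,m))$ in place of $s$. All the remaining work is routine tracking of the parameters $k,\ell,m,s$ along the chain of reductions.
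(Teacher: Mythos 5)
Your proposal is correct and follows essentially the same route as the paper: the first equality via $\sym{s}$-orbit representatives and Theorem \ref{thm:4reductions}(1), and the second via iterated application of the reductions in Theorem \ref{thm:4reductions}(2)--(4), with all the hypothesis checks ($s-j-1\geq\tilde{\ell}$ for stripping a zero, $k\geq\ell$ throughout) done correctly. The only difference is cosmetic: you normalise $(\ell,m)$ to $(\tilde{\ell},\tilde{\ell})$ first and strip the leading zeros afterwards, whereas the paper strips the zeros first and then normalises; both orders land on $\tilde{k}=k-\ell+\max(s,\min(\ell,m))$.
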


\begin{proof}
Clearly,
$\tilde{k} \geq \max(s,\min(\ell,m)) \geq \min (\ell,m)
= \tilde{\ell} \geq \min (\ell,m,s) = \tilde{s}$.

Next, $\overline{\BW}^{\tilde{\ell}}_s$ is a set of orbit representatives of $\BW^{\tilde{\ell}}_s$ under the action of $\sym{s}$.  Thus,
\begin{align*}
\dd_{\upa{(k,\ell^s)}{m}}
&= \min\{ \kappa\in \ZZ^+ \mid \kappa\, \ba^{k,\ell}_{m,\bw} \in \ZZ,\ \forall \bw \in {\BW}^{\tilde{\ell}}_s \} \\
&= \min\{ \kappa\in \ZZ^+ \mid \kappa\, \ba^{k,\ell}_{m,\sigma \cdot \bw} \in \ZZ,\ \forall \bw \in \overline{\BW}^{\tilde{\ell}}_s\ \forall\sigma\in \sym{s} \} \\
&= \min\{ \kappa \in \ZZ^+ \mid \kappa\, \ba^{k,\ell}_{m,\bw} \in \ZZ,\ \forall \bw \in \overline{\BW}^{\tilde{\ell}}_s \}
\end{align*}
by Theorem \ref{thm:4reductions}(1), proving the first equality.

Now,  when $s > \tilde{\ell}$, there is a bijection $\overline{\BW}^{\tilde{\ell}}_{s-1} \to \overline{\BW}^{\tilde{\ell}}_s$ defined by $\bw \mapsto (0,\bw)$.  Thus,
\begin{align*}
\dd_{\upa{(k+1,\ell^{s-1})}{m}}
&= \min\{ \kappa \in \ZZ^+ \mid \kappa\, \ba^{k+1,\ell}_{m,\bw} \in \ZZ,\ \forall \bw \in \overline{\BW}^{\tilde{\ell}}_{s-1} \} \\
&= \min\{ \kappa \in \ZZ^+ \mid \kappa\, \ba^{k,\ell}_{m,(0,\bw)} \in \ZZ,\ \forall \bw \in \overline{\BW}^{\tilde{\ell}}_{s-1} \} \\
&= \min\{ \kappa \in \ZZ^+ \mid \kappa\, \ba^{k,\ell}_{m,\bw'} \in \ZZ,\ \forall \bw' \in \overline{\BW}^{\tilde{\ell}}_s \} \\
&=\dd_{\upa{(k,\ell^{s})}{m}},
\end{align*}
where the second equality follows from Theorem \ref{thm:4reductions}(2).  Iterating this, we get
$$
\dd_{\upa{(k,\ell^{s})}{m}} = \dd_{\upa{(k+1,\ell^{s-1})}{m}} = \dotsb
= \dd_{\upa{(k+s-\tilde{\ell},\ell^{\tilde{\ell}})}{m}}.
$$
Thus, in general, when $s$ may not be larger than $\tilde{\ell}$, we have
$$
\dd_{\upa{(k,\ell^s)}{m}}
= \dd_{\upa{(k',\ell^{\tilde{s}})}{m}},
$$
where $k' = k+\max(s-\tilde{\ell},0) = k- \tilde{\ell} + \max(s, \tilde{\ell})$.  Theorem \ref{thm:4reductions}(3,4) now shows that
\begin{align*}
\dd_{\upa{(k',\ell^{\tilde{s}})}{m}}
&=
\begin{cases}
\dd_{\upa{(k',\ell^{\tilde{s}})}{m-1}}, &\text{if } m > \ell, \\
\dd_{\upa{(k'-1,(\ell-1)^{\tilde{s}})}{m}}, &\text{if } m < \ell;
\end{cases} \\
&=
\begin{cases}
\dd_{\upa{(k',\ell^{\tilde{s}})}{\ell}}, &\text{if } m > \ell, \\
\dd_{\upa{(k'-\ell+m,m^{\tilde{s}})}{m}}, &\text{if } m < \ell;
\end{cases} \\
&= \dd_{\upa{(k'-\ell+\tilde{\ell},{\tilde{\ell}}^{\tilde{s}})}{\tilde{\ell}}}
= \dd_{\upa{(\tilde{k},{\tilde{\ell}}^{\tilde{s}})}{\tilde{\ell}}},
\end{align*}
since $k'- \ell + \tilde{\ell} = k - \ell + \max(s,\tilde{\ell}) = \tilde{k}.$
\end{proof}

\section{Some general reduction results} \label{sec:general}

In this concluding section, we relate $f_{\upI{\lambda}{\nu}}$ and $\dd_{\upI{\lambda}{\nu}}$ to those labelled by smaller partitions.  Together with the results of the last two sections, we will be able to obtain closed formulae for $\dd_{\upI{\lambda}{\nu}}$ in a slightly more general setting than what we have seen earlier.  We are also able to use these results to obtain upper bounds for $\dd_{\upI{\lambda}{\nu}}$ in general.

Our first result relates $\DD{\upI{\lambda}{\nu}}$ to another labelled by smaller partitions.

\begin{prop} \label{prop:general-basis-reduction}
Let $\lambda = (\lambda_1,\dotsc, \lambda_r)$ and $\nu = (\nu_1,\dotsc, \nu_t)$ be partitions with $[\lambda] \subseteq [\nu]$.
\begin{enumerate}
\item Let $\tilde{\nu} = (\nu_1,\dotsc, \nu_{r-1}, \lambda_r)$.  Then $\DD{\upI{\lambda}{\nu}} = \DD{\upI{\lambda}{\tilde{\nu}}}$.

\item \textup{(Row removal)} Suppose that $\lambda_1 = \nu_1$ and $r \geq 2$.  Let $\check{\lambda} = (\lambda_2,\dotsc, \lambda_r)$ and $\check{\nu} = (\nu_2,\dotsc, \nu_t)$.  Then $\DD{\upI{\lambda}{\nu}} = \DD{\upI{\check{\lambda}}{\check{\nu}}}^{+\lambda_1}$.
\end{enumerate}
\end{prop}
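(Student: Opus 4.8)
The plan is to pin down, in each part, the element of $\QQ\sym{|\nu|}$ appearing on the right-hand side, using the fact (Lemma \ref{lem:f_in_terms_of_e}(2)) that $\DD{\s}\,e_{\IT^{\mu}} = f_{\s}$ for $\s\in\STab(\mu)$ together with the observation that $\DD{\s} = \sum_{\ww\in\STab(\mu)}q_{\s,\ww}d(\ww)$ is, by definition, a $\QQ$-combination of the elements $d_{\mu}(\ww)$ (for $\ww\in\STab(\mu)$), where $d_{\mu}(\ww)\cdot\IT^{\mu}=\ww$. Since $g\mapsto g\,e_{\IT^{\mu}}$ maps the span of the $d_{\mu}(\ww)$ bijectively onto the standard basis $\{e_{\ww}\}$ of $S^{\QQ}_{\mu}$, in each part it suffices to show that the claimed right-hand side, regarded inside $\QQ\sym{|\nu|}$, is a $\QQ$-combination of the $d_{\nu}(\ww)$ and sends $e_{\IT^{\nu}}$ to $f_{\upI{\lambda}{\nu}}$; comparing standard-basis coefficients then forces it to equal $\DD{\upI{\lambda}{\nu}}$. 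The only genuinely fiddly ingredients are a handful of tableau identities that I would verify first from the definitions: for part (1), that $\upt{\IT^{\tilde{\nu}}}{\nu}=\IT^{\nu}$, that $\upt{(\upI{\lambda}{\tilde{\nu}})}{\nu}=\upI{\lambda}{\nu}$, and that $d_{\tilde{\nu}}(\vv)=d_{\nu}(\upt{\vv}{\nu})$ in $\sym{|\nu|}$ for $\vv\in\STab(\tilde{\nu})$; and for part (2) (where $\lambda_1=\nu_1$ is used crucially), that $\IT^{\nu}$ is the $\nu$-tableau with first row $[1,\lambda_1]$ whose rows $2,\ldots,t$ carry the entries of $\IT^{\check{\nu}}$ increased by $\lambda_1$, that prepending the row $[1,\lambda_1]$ to (the $\lambda_1$-shift of) $\upI{\check{\lambda}}{\check{\nu}}$ gives $\upI{\lambda}{\nu}$, and that for $\vv\in\STab(\check{\nu})$ the tableau $T_{\vv}$ obtained by prepending $[1,\lambda_1]$ to the $\lambda_1$-shift of $\vv$ satisfies $T_{\vv}=d_{\check{\nu}}(\vv)^{+\lambda_1}\cdot\IT^{\nu}$ and $d_{\nu}(T_{\vv})=d_{\check{\nu}}(\vv)^{+\lambda_1}$.

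For part (1): since $[\lambda]\subseteq[\tilde{\nu}]\subseteq[\nu]$ and $f_{\upI{\lambda}{\tilde{\nu}}}=\DD{\upI{\lambda}{\tilde{\nu}}}\,e_{\IT^{\tilde{\nu}}}=\DD{\upI{\lambda}{\tilde{\nu}}}\,f_{\IT^{\tilde{\nu}}}$ with $\DD{\upI{\lambda}{\tilde{\nu}}}\in\QQ\sym{|\tilde{\nu}|}$, Corollary \ref{cor:same-relative-positions}(1), applied with the pair $(\tilde{\nu},\nu)$ in place of $(\lambda,\nu)$, gives $f_{\upt{(\upI{\lambda}{\tilde{\nu}})}{\nu}}=\DD{\upI{\lambda}{\tilde{\nu}}}\,f_{\upt{\IT^{\tilde{\nu}}}{\nu}}$, that is, $f_{\upI{\lambda}{\nu}}=\DD{\upI{\lambda}{\tilde{\nu}}}\,e_{\IT^{\nu}}$. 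Expanding $\DD{\upI{\lambda}{\tilde{\nu}}}$ over the $d_{\tilde{\nu}}(\vv)$ and using $d_{\tilde{\nu}}(\vv)\cdot\IT^{\nu}=\upt{\vv}{\nu}$ and $d_{\nu}(\upt{\vv}{\nu})=d_{\tilde{\nu}}(\vv)$ shows $\DD{\upI{\lambda}{\tilde{\nu}}}$, viewed in $\QQ\sym{|\nu|}$, lies in the span of the $d_{\nu}(\ww)$, so the first paragraph yields $\DD{\upI{\lambda}{\nu}}=\DD{\upI{\lambda}{\tilde{\nu}}}$.

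For part (2): comparing the initial tableaux shows $\res_{\IT^{\nu}}(i+\lambda_1)=\res_{\IT^{\check{\nu}}}(i)-1$ for all $i\in[1,|\check{\nu}|]$, so the hypotheses of Proposition \ref{prop:same-relative-positions} are met with $I=[1,|\check{\nu}|-1]$, $W_I=\sym{[1,|\check{\nu}|]}$, $\s=\IT^{\check{\nu}}$, $\t=\IT^{\nu}$, $z=\lambda_1$ and $A=\DD{\upI{\check{\lambda}}{\check{\nu}}}\in\QQ W_I$ (the containment by Lemma \ref{lem:f_in_terms_of_e}(2)). Now $A\,f_{\IT^{\check{\nu}}}=\DD{\upI{\check{\lambda}}{\check{\nu}}}\,e_{\IT^{\check{\nu}}}=f_{\upI{\check{\lambda}}{\check{\nu}}}$ is a single seminormal basis vector of $S^{\QQ}_{\check{\nu}}$, so in the expansion $A\,f_{\IT^{\check{\nu}}}=\sum_{\tau\in\Gamma_{I,\IT^{\check{\nu}}}}a_{\tau}f_{\tau\cdot\IT^{\check{\nu}}}$ of Proposition \ref{prop:same-relative-positions}(1) one has $a_{\tau}=1$ for $\tau=d_{\check{\nu}}(\upI{\check{\lambda}}{\check{\nu}})$ and $a_{\tau}=0$ otherwise. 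Proposition \ref{prop:same-relative-positions}(2) then gives $\DD{\upI{\check{\lambda}}{\check{\nu}}}^{+\lambda_1}\,e_{\IT^{\nu}}=A^{+\lambda_1}f_{\IT^{\nu}}=f_{d_{\check{\nu}}(\upI{\check{\lambda}}{\check{\nu}})^{+\lambda_1}\cdot\IT^{\nu}}=f_{\upI{\lambda}{\nu}}$, the last step using the tableau identities above. Since $\vv\mapsto T_{\vv}$ embeds $\STab(\check{\nu})$ into $\STab(\nu)$ and $d_{\nu}(T_{\vv})=d_{\check{\nu}}(\vv)^{+\lambda_1}$, the element $\DD{\upI{\check{\lambda}}{\check{\nu}}}^{+\lambda_1}$ lies in the span of the $d_{\nu}(\ww)$, and the first paragraph gives $\DD{\upI{\lambda}{\nu}}=\DD{\upI{\check{\lambda}}{\check{\nu}}}^{+\lambda_1}$.

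All of the mathematical substance is supplied by Corollary \ref{cor:same-relative-positions}(1) and Proposition \ref{prop:same-relative-positions}; the expected obstacle is purely combinatorial bookkeeping — checking the identifications $\upt{(\upI{\lambda}{\tilde{\nu}})}{\nu}=\upI{\lambda}{\nu}$ and $T_{\upI{\check{\lambda}}{\check{\nu}}}=\upI{\lambda}{\nu}$, the residue shift $\res_{\IT^{\nu}}(i+\lambda_1)=\res_{\IT^{\check{\nu}}}(i)-1$, and being careful about which of $\sym{|\nu|}$, $\sym{|\tilde{\nu}|}$, $\sym{|\check{\nu}|}$ each $\DD{}$ naturally lives in before the inclusions are applied.
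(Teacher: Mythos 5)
Your proposal is correct and follows essentially the same route as the paper: part (1) via Corollary \ref{cor:same-relative-positions}(1) applied to the inclusion $[\tilde{\nu}]\subseteq[\nu]$, part (2) via Proposition \ref{prop:same-relative-positions}(2) with $z=\lambda_1$, and in both cases the identification of the resulting element with $\DD{\upI{\lambda}{\nu}}$ by observing that it lies in the span of the $d(\ww)$, $\ww\in\STab(\nu)$, and sends $e_{\IT^{\nu}}$ to $f_{\upI{\lambda}{\nu}}$ (the paper phrases this as injectivity of $\tilde{\s}\mapsto\upt{\tilde{\s}}{\nu}$ and $\check{\s}\mapsto d(\check{\s})^{+\lambda_1}\cdot\IT^{\nu}$ plus comparison of standard-basis coefficients). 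The tableau and residue identities you flag for verification are exactly the ones the paper uses implicitly or states, and they all check out.
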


\begin{proof} \hfill
\begin{enumerate}
  \item Since $f_{\upI{\lambda}{\tilde{\nu}}} = \DD{\upI{\lambda}{\tilde{\nu}}} f_{\IT^{\tilde{\nu}}}$,
 we have by Corollary \ref{cor:same-relative-positions}(1)
      \begin{align*}
      f_{\upI{\lambda}{\nu}}
      &= f_{(\upI{\lambda}{\tilde{\nu}}) \uparrow^{\nu}} = \DD{\upI{\lambda}{\tilde{\nu}}} f_{\upI{\tilde{\nu}}{\nu}}
      = \DD{\upI{\lambda}{\tilde{\nu}}} f_{\IT^{\nu}}
      = \DD{\upI{\lambda}{\tilde{\nu}}} e_{\IT^{\nu}}
      = \sum_{\substack{\tilde{\s} \in \STab(\tilde{\nu})}} q_{\upI{\lambda}{\tilde{\nu}},\tilde{\s}}\, e_{d(\tilde{\s}) \cdot \IT^{\nu}}.
      \end{align*}
      Since $d(\tilde{\s}) \cdot \IT^{\nu} = \upt{\tilde{\s}}{\nu} \in \STab(\nu)$ for all $\tilde{\s} \in \STab(\tilde{\nu})$, and the map $\tilde{\s} \mapsto \upt{\tilde{\s}}{\nu}$ is injective, the desired result follows.

  \item Since $f_{d(\upI{\check{\lambda}}{\check{\nu}}) \cdot \IT^{\check{\nu}}} = f_{\upI{\check{\lambda}}{\check{\nu}}} = \DD{\upI{\check{\lambda}}{\check{\nu}}} f_{\IT^{\check{\nu}}}$,
      and $\lambda_1+\upI{\check{\lambda}}{\check{\nu}} (i,j) = \upI{\lambda}{\nu}(i+1,j)$ for all $(i,j) \in [\check{\nu}]$ so that $\res_{\upI{\check{\lambda}}{\check{\nu}}}(i) = 1+\res_{\upI{\lambda}{\nu}}(i+\lambda_1)$ for all $i \in [1,|\check{\nu}|]$, we have by Proposition \ref{prop:same-relative-positions}(2) (with $z= \lambda_1$)
      $$
      f_{\upI{\lambda}{\nu}}
      = f_{d(\upt{\check{\lambda}}{\check{\nu}})^{+\lambda_1} \cdot \IT^{\nu}}
      = \DD{\upt{\check{\lambda}}{\check{\nu}}}^{+\lambda_1} f_{\IT^{\nu}}
      = \sum_{\substack{\check{\s} \in \STab(\check{\nu}) }} q_{\upI{\check{\lambda}}{\check{\nu}},\check{\s}}\, e_{d(\check{\s})^{+\lambda_1} \cdot \IT^{\nu}}.
      $$
      Since $d(\check{\s})^{+\lambda_1} \cdot \IT^{\nu} \in \STab(\nu)$ for all $\check{\s} \in \STab(\tilde{\nu})$, and the map $\check{\s} \mapsto d(\check{\s})^{+\lambda_1} \cdot \IT^{\nu}$ is injective, the desired result follows.
\end{enumerate}
\end{proof}

\begin{cor}[cf.\ {\cite[Theorem 1]{RHansen10}}] \label{cor:remove-one-node}
Let $\nu$ be a partition and let $\lambda$ be the partition obtained from $\nu$ by removing a removable node, say on its $i$-th row.  Then
$$
\DD{\upI{\lambda}{\nu}} = \DD{\upa{(\lambda_i,\dotsc, \lambda_r)}{1}}^{+(\sum_{j=1}^{i-1} \lambda_j)}
= \sum_{\s \in \SSTab((\lambda_i,\dotsc,\lambda_r);1)} a_{\s} d(\s)^{+(\sum_{j=1}^{i-1} \lambda_j)}.
$$
(See Theorem \ref{thm:mu=(1)} for the definition of $a_{\s}$.)
\end{cor}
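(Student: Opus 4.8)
The plan is to reduce the statement to Theorem~\ref{thm:mu=(1)} by peeling off the first $i-1$ rows with the row-removal part of Proposition~\ref{prop:general-basis-reduction}(2).

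I would begin with the combinatorial set-up. Since $\lambda$ is obtained from $\nu$ by deleting a single removable node lying on its $i$-th row, we have $\lambda_j = \nu_j$ for all $j \ne i$ and $\lambda_i = \nu_i - 1$. When $\nu_i \ge 2$, the partitions $\lambda$ and $\nu$ have the same number $r$ of (nonzero) parts and $i \le r$; the only other possibility, $\nu_i = 1$, forces $i = r+1$ (the removed node is the whole last row of $\nu$), so $\upI{\lambda}{\nu} = \IT^{\nu}$, hence $\DD{\upI{\lambda}{\nu}} = 1$, the sequence $(\lambda_i,\dotsc,\lambda_r)$ is empty, and the identity holds trivially. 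I would therefore assume $\nu_i \ge 2$ henceforth, so that $\nu = (\nu_1,\dotsc,\nu_r)$.

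Assuming $i \ge 2$, I would next invoke Proposition~\ref{prop:general-basis-reduction}(2): its hypotheses $\lambda_1 = \nu_1$ and $r \ge 2$ hold since $2 \le i \le r$, yielding $\DD{\upI{\lambda}{\nu}} = \DD{\upI{\check{\lambda}}{\check{\nu}}}^{+\lambda_1}$ with $\check{\lambda} = (\lambda_2,\dotsc,\lambda_r)$ and $\check{\nu} = (\nu_2,\dotsc,\nu_r)$; moreover $\check{\lambda}$ is again obtained from $\check{\nu}$ by deleting the removable node on its $(i-1)$-th row. Iterating this $i-1$ times and composing the resulting shifts, I would arrive at
\[
\DD{\upI{\lambda}{\nu}} = \DD{\upI{\lambda'}{\nu'}}^{+(\sum_{j=1}^{i-1}\lambda_j)},
\]
where $\lambda' = (\lambda_i,\dotsc,\lambda_r)$ and $\nu' = (\nu_i,\dotsc,\nu_r)$. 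Since $\nu_i = \lambda_i + 1$ and $\nu_j = \lambda_j$ for every $j > i$, one has $\nu' = (\lambda_i+1,\lambda_{i+1},\dotsc,\lambda_r) = \lambda' + (1)$, so $\upI{\lambda'}{\nu'} = \upa{\lambda'}{1}$ by Definition~\ref{defn:uparrow}(3). (When $i = 1$ there is nothing to peel off and the shift is trivial.) This is exactly the first asserted equality $\DD{\upI{\lambda}{\nu}} = \DD{\upa{(\lambda_i,\dotsc,\lambda_r)}{1}}^{+(\sum_{j=1}^{i-1}\lambda_j)}$.

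The second equality is then immediate from Theorem~\ref{thm:mu=(1)}: that result gives $f_{\upa{(\lambda_i,\dotsc,\lambda_r)}{1}} = \sum_{\s} a_{\s}\, e_{\s}$, the sum over $\s \in \SSTab((\lambda_i,\dotsc,\lambda_r);1)$, whence $\DD{\upa{(\lambda_i,\dotsc,\lambda_r)}{1}} = \sum_{\s} a_{\s}\, d(\s)$ by Definition~\ref{defn:D}(2); applying the $\QQ$-linear isomorphism $(\,\cdot\,)^{+(\sum_{j=1}^{i-1}\lambda_j)}$ term by term then finishes the proof. I do not expect a genuine obstacle: the only points requiring care are the bookkeeping of the composed shifts throughout the iteration and the verification that $\nu' = \lambda' + (1)$, both of which are routine.
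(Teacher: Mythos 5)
Your proposal is correct and follows exactly the paper's argument: iterate the row-removal identity of Proposition \ref{prop:general-basis-reduction}(2) to strip off the first $i-1$ rows (the shifts composing additively to $\sum_{j=1}^{i-1}\lambda_j$), observe that the truncated pair satisfies $\nu'=\lambda'+(1)$, and then read off $\DD{\upa{\lambda'}{1}}=\sum_{\s}a_{\s}d(\s)$ from Theorem \ref{thm:mu=(1)}. The paper's proof is just this in one line, so there is nothing further to add; your extra care with the degenerate case $\nu_i=1$ is harmless bookkeeping.
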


\begin{proof}
This follows by iterating Proposition \ref{prop:general-basis-reduction}(2) and Theorem \ref{thm:mu=(1)}.
\end{proof}

We next relate $\dd_{\upI{\lambda}{\nu}}$ to another labelled by smaller partitions.

\begin{thm} \label{thm:general}
Let $\lambda= (\lambda_1,\dotsc, \lambda_r)$ and $\nu = (\nu_1,\dotsc, \nu_t)$ be partitions with $[\lambda] \subseteq [\nu]$.
\begin{enumerate}
  \item We have $\dd_{\upI{\lambda}{\nu}} = \dd_{\upI{\lambda}{(\nu_1,\dotsc,\nu_{r-1},\lambda_r)}}$.
  \item If $\lambda_1 = \nu_1$ and $r \geq 2$, then $\dd_{\upI{\lambda}{\nu}} = \dd_{\upI{(\lambda_2,\dotsc,\lambda_r)}{(\nu_2,\dotsc,\nu_t)}}$.

  \item For $m \in [1,\nu_1-\lambda_1]$, we have
  $$\dd_{\upI{\lambda}{\nu}} \mid \dd_{\upa{\lambda}{m}} \dd_{\upI{\lambda+(m)}{\nu}}.$$

  \item For $i \in [2,r-1]$ and $m \in \ZZ^+$, we have
  \begin{align*}
  \dd_{\upa{\lambda}{m}} &\mid \dd_{\upa{(\lambda_1+i-1,\lambda_{i+1},\dotsc, \lambda_r)}{m}} \dd_{\upa{(\lambda_1,\dotsc,\lambda_i)}{m}} .
  \end{align*}
\end{enumerate}
\end{thm}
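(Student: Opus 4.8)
The plan is to derive all four parts from two structural facts already in hand — Proposition \ref{prop:general-basis-reduction} and Corollary \ref{cor:same-relative-positions} — together with the characterisation $\dd_{\s} = \min\{ k \in \ZZ^+ \mid k\DD{\s} \in \ZZ\sym{n}\}$ of Lemma \ref{lem:f_in_terms_of_e}(3). The recurring device is the following: suppose $f_{\t} = A_1 \dotsm A_{\ell}\, e_{\IT^{\nu}}$ for some $A_1,\dotsc, A_{\ell} \in \QQ\sym{|\nu|}$, and let $k_j \in \ZZ^+$ be minimal with $k_j A_j \in \ZZ\sym{|\nu|}$. Then $(k_1 \dotsm k_{\ell})(A_1 \dotsm A_{\ell}) \in \ZZ\sym{|\nu|}$, so $(k_1 \dotsm k_{\ell}) f_{\t}$ lies in the $\ZZ$-span of $\{ e_{w \cdot \IT^{\nu}} \mid w \in \sym{|\nu|} \}$, and hence in the $\ZZ$-span of the standard basis, since an arbitrary polytabloid expands over the standard polytabloids with integer coefficients (Proposition \ref{prop:Garnir}(2)). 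As $\{ k \in \ZZ \mid k f_{\t} \in \ZZ\text{-span of the standard basis}\}$ is an ideal of $\ZZ$ with minimal positive generator $\dd_{\t}$ (Definition \ref{defn:D}(3)), we conclude $\dd_{\t} \mid k_1 \dotsm k_{\ell}$.

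Parts (1) and (2) are the degenerate cases where this product has a single factor and the factorisation is an identity. For part (1), Proposition \ref{prop:general-basis-reduction}(1) gives $\DD{\upI{\lambda}{\nu}} = \DD{\upI{\lambda}{(\nu_1,\dotsc,\nu_{r-1},\lambda_r)}}$ as elements of $\QQ\sym{|\nu|}$, whence the two denominators coincide by Lemma \ref{lem:f_in_terms_of_e}(3). For part (2), Proposition \ref{prop:general-basis-reduction}(2) gives $\DD{\upI{\lambda}{\nu}} = \DD{\upI{(\lambda_2,\dotsc,\lambda_r)}{(\nu_2,\dotsc,\nu_t)}}^{+\lambda_1}$; since $R \mapsto R^{+\lambda_1}$ is a ring isomorphism carrying $\ZZ\sym{\nu_2 + \dotsb + \nu_t}$ onto the subring $\ZZ\sym{[\lambda_1+1,|\nu|]}$ of $\ZZ\sym{|\nu|}$, a positive integer clears $\DD{\upI{\lambda}{\nu}}$ into the integral group ring if and only if it clears $\DD{\upI{(\lambda_2,\dotsc,\lambda_r)}{(\nu_2,\dotsc,\nu_t)}}$, and Lemma \ref{lem:f_in_terms_of_e}(3) gives the equality of denominators.

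For part (3), Corollary \ref{cor:same-relative-positions}(2) provides $f_{\upI{\lambda}{\nu}} = \DD{\upa{\lambda}{m}}\, \DD{\upI{\lambda+(m)}{\nu}}\, e_{\IT^{\nu}}$; applying the device with $\ell = 2$, $A_1 = \DD{\upa{\lambda}{m}}$, $A_2 = \DD{\upI{\lambda+(m)}{\nu}}$, and noting (via Lemma \ref{lem:f_in_terms_of_e}(3)) that the minimal clearing integers are exactly $\dd_{\upa{\lambda}{m}}$ and $\dd_{\upI{\lambda+(m)}{\nu}}$, we obtain $\dd_{\upI{\lambda}{\nu}} \mid \dd_{\upa{\lambda}{m}} \dd_{\upI{\lambda+(m)}{\nu}}$. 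Part (4) is the same argument applied to the factorisation $f_{\upa{\lambda}{m}} = \DD{\upa{\lambda^{(i)}}{m}}^{+(|\lambda| - |\lambda^{(i)}|)}\, \DD{\upa{\lambda^{\leqslant i}}{m}}\, e_{\IT^{\lambda+(m)}}$ from Corollary \ref{cor:same-relative-positions}(3), with $\lambda^{(i)} = (\lambda_1+i-1,\lambda_{i+1},\dotsc,\lambda_r)$ and $\lambda^{\leqslant i} = (\lambda_1,\dotsc,\lambda_i)$; here one additionally uses, as in part (2), that the column-shift $R \mapsto R^{+z}$ maps the integral group ring isomorphically onto its image, so the minimal positive integer clearing $\DD{\upa{\lambda^{(i)}}{m}}^{+z}$ equals $\dd_{\upa{\lambda^{(i)}}{m}}$.

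Since every ingredient is already established, no genuine obstacle is anticipated. The only points that need care are: (a) the passage from $\ZZ\sym{|\nu|} \cdot e_{\IT^{\nu}}$ to the $\ZZ$-span of the standard basis, which is precisely the integrality of the straightening rules (Proposition \ref{prop:Garnir}(2)); and (b) recognising the column-shift map $R \mapsto R^{+z}$ in parts (2) and (4) as an isomorphism of integral group rings, so that it leaves the minimal clearing integer — and hence the denominator — unchanged.
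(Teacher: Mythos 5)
Your proposal is correct and follows essentially the same route as the paper: parts (1) and (2) from Proposition \ref{prop:general-basis-reduction}, and parts (3) and (4) from the factorisations in Corollary \ref{cor:same-relative-positions}(2),(3) together with the observation that clearing each factor into $\ZZ\sym{|\nu|}$ clears the product, whence $\dd$ divides the product of the individual denominators. The only difference is that you spell out the integrality of straightening and the shift-isomorphism points that the paper leaves implicit.
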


\begin{proof}
Parts (1) and (2) follow from Proposition \ref{prop:general-basis-reduction}.

For part (3)
  we have $f_{\upI{\lambda}{\nu}} = \DD{\upa{\lambda}{m}} \DD{\upI{\lambda+(m)}{\nu}}\, e_{\IT^{\nu}}$ for any $m \in [1,\nu_1-\lambda_1]$ by Corollary \ref{cor:same-relative-positions}(2).  Thus,
      $$
      \dd_{\upa{\lambda}{m}} \dd_{\upI{\lambda+(m)}{\nu}} f_{\upI{\lambda}{\nu}}
      = (\dd_{\upa{\lambda}{m}}\DD{\upa{\lambda}{m}}) (\dd_{\upI{\lambda+(m)}{\nu}}\DD{\upI{\lambda+(m)}{\nu}})\, e_{\IT^{\nu}} \in (\ZZ \sym{|\nu|}) S^{\ZZ}_{\nu} = S^{\ZZ}_{\nu},
      $$
      so that $\dd_{\upI{\lambda}{\nu}} \mid \dd_{\upa{\lambda}{m}} \dd_{(\IT^{\lambda+(m)}) \uparrow^{\nu}}$ as desired.

Part (4) uses Corollary \ref{cor:same-relative-positions}(3) and an argument similar to part (3).
\end{proof}

By iterating Theorem \ref{thm:general} together with Corollaries \ref{cor:mu=(1)}, \ref{cor:denominator-klm} and \ref{cor:reduction-denominator-k-ell^s-m}, we can obtain (possibly many) upper bounds for any $\dd_{\upI{\lambda}{\nu}}$.
For example, when combining part (3) with part (2) in Theorem \ref{thm:general}, we get
$$
\dd_{\upI{\lambda}{\nu}} \mid \dd_{\upa{\lambda}{\nu_1-\lambda_1}} \dd_{\upI{(\lambda_2,\dotsc,\lambda_r)}{(\nu_2,\dotsc, \nu_{t})}}
\mid \dd_{\upa{\lambda}{\nu_1-\lambda_1}} \dd_{\upa{(\lambda_2,\dotsc, \lambda_r)}{\nu_2-\lambda_2}} \dd_{\upI{(\lambda_3,\dotsc,\lambda_r)}{(\nu_3,\dotsc, \nu_{t})}} \mid \dotsb.
$$
We may obtain upper bounds for $\dd_{\upa{\lambda}{\nu_1-\lambda_1}}, \dd_{\upa{(\lambda_2,\dotsc, \lambda_r)}{\nu_2-\lambda_2}}, \dotsc$ further by using part (3) or (4) of Theorem \ref{thm:general}.

We demonstrate this process of obtaining upper bounds below with the example of $\dd_{\upa{(k,\ell^s)}{\ell}}$ where $k \geq \ell \geq s$.  Recall that in the last section, we showed that all denominators of the form $\dd_{\upa{(k,\ell^s)}{m}}$ can be reduced to this form (Corollary \ref{cor:reduction-denominator-k-ell^s-m}).

\begin{prop} \label{prop:upper-bound}
Let $k,\ell,s \in \ZZ^+$ with $k \geq \ell \geq s$.  Then
$$
\dd_{\upa{(k,\ell^s)}{\ell}} \mid \gcd\left(\prod_{i=1}^{\ell} (k-\ell+s+i), \prod_{j=1}^{s} \frac{\lcm[k-\ell+j,k+j]}{k-\ell+j}\right).
$$
\end{prop}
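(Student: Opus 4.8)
The plan is to prove that $\dd_{\upa{(k,\ell^s)}{\ell}}$ divides each of the two factors appearing inside the $\gcd$ separately; since it then divides their greatest common divisor, the proposition follows. Throughout write $\lambda = (k,\ell^s)$ and recall that $\dd_{\upa{\lambda}{\ell}} = \dd_{\upI{\lambda}{\lambda+(\ell)}}$.

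For the first factor $\prod_{i=1}^{\ell}(k-\ell+s+i)$, I would iterate Theorem \ref{thm:general}(3) with $m=1$. At the $j$-th stage one has the partition $(k+j,\ell^s)$ sitting inside $(k+\ell,\ell^s)$, with $(k+\ell)-(k+j) = \ell-j \geq 1$ for $0 \leq j \leq \ell-1$, so $m=1$ is admissible; since $\dd_{\upI{(k+\ell,\ell^s)}{(k+\ell,\ell^s)}} = 1$, a routine induction gives
\[
\dd_{\upa{(k,\ell^s)}{\ell}} \ \mid \ \prod_{i=0}^{\ell-1} \dd_{\upa{(k+i,\ell^s)}{1}}.
\]
For each $i$, the diagram $[(k+i,\ell^s)+(1)] = [(k+i+1,\ell^s)]$ has exactly two removable nodes, because $k+i+1 > \ell$: one on the first row with residue $k+i$, and one on row $s+1$ with residue $\ell-s-1$. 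Corollary \ref{cor:mu=(1)} then yields $\dd_{\upa{(k+i,\ell^s)}{1}} = (k+i)-(\ell-s-1) = k-\ell+s+1+i$, so the displayed product equals $\prod_{i=1}^{\ell}(k-\ell+s+i)$, as wanted.

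For the second factor $\prod_{j=1}^{s}\tfrac{\lcm[k-\ell+j,\,k+j]}{k-\ell+j}$, I would prove the auxiliary divisibility $\dd_{\upa{(k,\ell^s)}{\ell}} \mid \prod_{j=1}^{s}\dd_{\upa{(k+j-1,\ell)}{\ell}}$ by induction on $s$, the case $s=1$ being trivial. For $s \geq 2$ the partition $(k,\ell^s)$ has $r = s+1 \geq 3$ rows, so Theorem \ref{thm:general}(4) applies with $i=2$ and $m=\ell$, giving
\[
\dd_{\upa{(k,\ell^s)}{\ell}} \ \mid \ \dd_{\upa{(k+1,\ell^{s-1})}{\ell}}\; \dd_{\upa{(k,\ell)}{\ell}};
\]
applying the inductive hypothesis to $(k+1,\ell^{s-1})$ then gives $\dd_{\upa{(k+1,\ell^{s-1})}{\ell}} \mid \prod_{j=2}^{s}\dd_{\upa{(k+j-1,\ell)}{\ell}}$, whence $\dd_{\upa{(k,\ell^s)}{\ell}} \mid \prod_{j=1}^{s}\dd_{\upa{(k+j-1,\ell)}{\ell}}$. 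Since $\min(\ell,\ell)=\ell$ and $k+j-1 \geq \ell$, Corollary \ref{cor:denominator-klm} evaluates each factor as $\dd_{\upa{(k+j-1,\ell)}{\ell}} = \tfrac{\lcm[k-\ell+j,\,k+j]}{k-\ell+j}$, so this product is precisely the second factor.

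The argument is almost entirely bookkeeping built on Theorem \ref{thm:general} together with the closed formulae of Sections \ref{sec:mu=(1)} and \ref{sec:kl^sm}; the only points that need care are checking the hypotheses at each iteration step (the admissibility $m \leq \nu_1 - \lambda_1$ in Theorem \ref{thm:general}(3), and the constraint $i \in [2,r-1]$ in Theorem \ref{thm:general}(4), which is exactly what forces the induction on $s$ to bottom out at $s=1$), correctly tracking $\lambda + (m)$ as $(\lambda+(1))+(m-1)$ along the first iteration, and the residue computation feeding into Corollary \ref{cor:mu=(1)}. I do not anticipate any genuine obstacle beyond organising these two iterations cleanly.
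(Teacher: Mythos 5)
Your proposal is correct and follows essentially the same route as the paper: the paper likewise iterates Theorem \ref{thm:general}(3) with $m=1$ and evaluates via Corollary \ref{cor:mu=(1)} to get the first factor, and iterates Theorem \ref{thm:general}(4) with $i=2$ and evaluates via Corollary \ref{cor:denominator-klm} to get the second, then takes the $\gcd$. Your hypothesis checks (admissibility of $m=1$ at each stage, the constraint $i\in[2,r-1]$ forcing the induction to stop at $s=1$, and the two-removable-node residue computation) are all accurate.
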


\begin{proof}
Iterating Theorem \ref{thm:general}(3),
we get
\begin{align*}
\dd_{\upa{(k,\ell^s)}{\ell}} \mid \dd_{\upa{(k,\ell^s)}{1}} \dd_{\upa{(k+1,\ell^s)}{\ell-1}} \mid \dotsb
&\mid  \dd_{\upa{(k,\ell^s)}{1}} \dd_{\upa{(k+1,\ell^s)}{1}} \dotsb \dd_{\upa{(k+\ell-1,\ell^s)}{1}} \\
& = \prod_{i=1}^{\ell} (k-\ell+s+i)
\end{align*}
by Corollary \ref{cor:mu=(1)}.  We may also iterate Theorem \ref{thm:general}(4) with $i=2$ and get
\begin{align*}
\dd_{\upa{(k,\ell^s)}{\ell}}
\mid \dd_{\upa{(k,\ell)}{\ell}} \dd_{\upa{(k+1,\ell^{s-1})}{\ell}} \mid \dotsb
&\mid \dd_{\upa{(k,\ell)}{\ell}} \dd_{\upa{(k+1,\ell)}{\ell}} \dotsb \dd_{\upa{(k+s-1,\ell)}{\ell}} \\
&= \prod_{j=1}^{s} \tfrac{\lcm[k-\ell+j,\,k+j]}{k-\ell+j}
\end{align*}
by Corollary \ref{cor:denominator-klm}.  The proposition thus follows.
\end{proof}

\begin{eg}
In this example, we illustrate how Proposition \ref{prop:upper-bound} may be used to show that $\dd_{\upa{(k,2^2)}{2}} = (k+1)(k+2)$ for all $k \geq 2$.  By Corollary \ref{cor:reduction-denominator-k-ell^s-m}, this also gives
\[
\dd_{\upa{(k+\ell-s,\ell^s)}{m}} = (k+1)(k+2)
\]
whenever $k \geq s \geq 2$ and $\min(\ell,m) = 2$.

By Corollary \ref{cor:same-relative-positions}(2) and Theorem \ref{thm:mu=(1)}, we have
\begin{align*}
f_{\upa{(k,2^2)}{2}} &= \DD{\upa{(k,2^2)}{1}} \DD{\upa{(k+1,2^2)}{1}} e_{\IT^{(k+2,2^2)}} \\
&= \sum_{\substack{\uu \in \SSTab((k,2^2);1) \\ \vv \in \SSTab((k+1,2^2);1)}} a_{\uu} a_{\vv} e_{d(\uu) \cdot \vv}.
\end{align*}
For each $\vv \in \SSTab((k+1,2^2);1)$, let $b_{\vv} \in [k+2,k+6]$ such that $\vv$ contains $[1,k+1]$ and $b_{\vv}$ in its first row.
Let
$$
\s = \s^{k,2,2}_{(1,1)} = \raisebox{3mm}{
\ytableausetup{mathmode, boxsize=1.8em}
\begin{ytableau}
\scriptstyle 1 & \scriptstyle 2  & \cdots & \cdots & \scriptstyle k & \scriptstyle \FM{k+2} & \scriptstyle \KJ{k+4} \\
\scriptstyle \FM{k+1} & \scriptstyle \KJ{k+3} \\
\scriptstyle \KM{k+5} & \scriptstyle \KM{k+6}
\end{ytableau} }.
$$
By Proposition \ref{prop:Garnir}(3) (with $l=1$), $e_{d(\uu) \cdot \vv}$ does not contribute to the coefficient of $e_{\s}$ in $f_{\upa{(k,2^2)}{2}}$ unless $d(\uu) \cdot \vv$
contains $[1,k]\cup\{k+2,k+4 \}$ in its first row, or equivalently, $d(\uu) (\{k+1,b_{\vv} \}) = \{k+2,k+4\}$. Assume thus $d(\uu) (\{k+1,b_{\vv} \}) = \{k+2,k+4\}$.

\begin{description}[leftmargin=2em]
\item[Case 1a. $d(\uu) (k+1) = k+2$, $d(\uu)(b_\vv) = k+4$]
There are exactly two such $\uu \in \SSTab((k,2^2);1)$, namely
$$
\uu_1 = \raisebox{3mm}{
\ytableausetup{mathmode, boxsize=1.8em}
\begin{ytableau}
\scriptstyle 1 & \scriptstyle 2  & \cdots & \cdots & \scriptstyle k & \scriptstyle \FM{k+2} \\
\scriptstyle \FM{k+1} & \scriptstyle \KJ{k+3} \\
\scriptstyle \KJ{k+4} & \scriptstyle \KM{k+5}
\end{ytableau} }, \quad
\uu_2 = \raisebox{3mm}{
\ytableausetup{mathmode, boxsize=1.8em}
\begin{ytableau}
\scriptstyle 1 & \scriptstyle 2  & \cdots & \cdots & \scriptstyle k & \scriptstyle \FM{k+2} \\
\scriptstyle \FM{k+1} & \scriptstyle \KJ{k+4} \\
\scriptstyle \KJ{k+3} & \scriptstyle \KM{k+5}
\end{ytableau} }.
$$
We have $d(\uu_1) = (k+1,k+2)$, $d(\uu_2) = (k+1,k+2)(k+3,k+4)$, $a_{\uu_1} = -\frac{1}{k+1} = a_{\uu_2}$, and
$b_{\vv} =
\begin{cases}
k+4, &\text{if }\uu = \uu_1; \\
k+3, &\text{if }\uu = \uu_2. \end{cases}$

\item[Case 1b. $d(\uu) (k+1) = k+4$, $d(\uu)(b_\vv) = k+2$]
There is only one such $\uu \in \SSTab((k,2^2);1)$, namely
$$
\uu_3 = \raisebox{3mm}{
\ytableausetup{mathmode, boxsize=1.8em}
\begin{ytableau}
\scriptstyle 1 & \scriptstyle 2  & \cdots & \cdots & \scriptstyle k & \scriptstyle \KJ{k+4} \\
\scriptstyle \FM{k+1} & \scriptstyle \FM{k+2} \\
\scriptstyle \KJ{k+3} & \scriptstyle \KM{k+5}
\end{ytableau}\ }.
$$
We have $d(\uu_3) = (k+1,k+4,k+3,k+2)$, $a_{\uu_3} = \frac{1}{k+1}$ and $b_{\vv} = k+3$.
\end{description}
Thus $b_{\vv} \in \{k+3,k+4\}$.  We now look at these $\vv \in \SSTab((k+1,2^2);1)$.
\begin{description}[leftmargin=2em]
\item[Case 2a. $b_{\vv} = k+4$]
There is only one such $\vv \in \SSTab((k+1,2^2);1)$, namely
$$
\vv_1 = \raisebox{3mm}{
\ytableausetup{mathmode, boxsize=1.8em}
\begin{ytableau}
\scriptstyle 1 & \scriptstyle 2  & \cdots & \cdots & \scriptstyle k & \scriptstyle k+1 & \scriptstyle \KJ{k+4} \\
\scriptstyle \FM{k+2} & \scriptstyle \FM{k+3} \\
\scriptstyle \KJ{k+5} & \scriptstyle \KM{k+6}
\end{ytableau} }.
$$
We have $a_{\vv_1} = \frac{1}{k+2}$.

\item[Case 2b. $b_{\vv} = k+3$]
There are exactly two such $\vv \in \SSTab((k+1,2^2);1)$, namely
$$
\vv_2 = \raisebox{3mm}{
\ytableausetup{mathmode, boxsize=1.8em}
\begin{ytableau}
\scriptstyle 1 & \scriptstyle 2  & \cdots & \cdots & \scriptstyle k & \scriptstyle k+1 & \scriptstyle \FM{k+3} \\
\scriptstyle \FM{k+2} & \scriptstyle \KJ{k+4} \\
\scriptstyle \KJ{k+5} & \scriptstyle \KM{k+6}
\end{ytableau} }, \quad
\vv_3 = \raisebox{3mm}{
\ytableausetup{mathmode, boxsize=1.8em}
\begin{ytableau}
\scriptstyle 1 & \scriptstyle 2  & \cdots & \cdots & \scriptstyle k & \scriptstyle k+1 & \scriptstyle \FM{k+3} \\
\scriptstyle \FM{k+2} & \scriptstyle \KJ{k+5} \\
\scriptstyle \KJ{k+4} & \scriptstyle \KM{k+6}
\end{ytableau} }.
$$
We have $a_{\vv_2} = -\frac{1}{k+2} = a_{\vv_3}$.
\end{description}

Now, $d(\uu_1) \cdot \vv_1 = d(\uu_2) \cdot \vv_2 = \overline{d(\uu_3) \cdot \vv_2} = \s$ for all $i \in [1,3]$, while
$$
d(\uu_2) \cdot \vv_3 = \overline{d(\uu_3) \cdot \vv_3} = \raisebox{3mm}{
\ytableausetup{mathmode, boxsize=1.8em}
\begin{ytableau}
\scriptstyle 1 & \scriptstyle 2  & \cdots & \cdots & \scriptstyle k & \scriptstyle k+2 & \scriptstyle k+4 \\
\scriptstyle k+1 & \scriptstyle k+5 \\
\scriptstyle k+3 & \scriptstyle k+6
\end{ytableau} } \in \STab((k+2,2^2)) \setminus \{\s\}.
$$
Thus the coefficient of $e_{\s}$ in $f_{\upa{(k,2^2)}{2}}$ equals
$$
a_{\uu_1}a_{\vv_1} + a_{\uu_2}a_{\vv_2} + a_{\uu_3}a_{\vv_2} = (-\tfrac{1}{k+1})(\tfrac{1}{k+2}) + (-\tfrac{1}{k+1})(-\tfrac{1}{k+2}) + (\tfrac{1}{k+1})(-\tfrac{1}{k+2}) = -\tfrac{1}{(k+1)(k+2)}.
$$
This yields
$$
(k+1)(k+2) \mid \dd_{\upa{(k,2^2)}{2}} \mid \gcd( \prod_{i=1}^2 (k+i), \prod_{j=1}^2 \tfrac{\lcm[k-2+j,k+j]}{k-2+j}) \mid \prod_{i=1}^2 (k+i) = (k+1)(k+2)
$$
by Proposition \ref{prop:upper-bound}, forcing equality throughout.
\end{eg}

We now give an indication how Theorem \ref{thm:summary} comes about:

\begin{proof}[Proof of Theorem \ref{thm:summary}]
Part (1) follows from Corollary \ref{cor:mu=(1)} and Theorem \ref{thm:general}(2). Part (2) follows from Corollary \ref{cor:denominator-klm} and Theorem \ref{thm:general}(1). Part (3) follows from Corollary \ref{cor:reduction-denominator-k-ell^s-m} and Theorem \ref{thm:general}(1), while parts (4)--(6) follow from Theorem \ref{thm:general}.
\end{proof}

We end the paper with the following concluding remark.

\begin{rem}
Let $\lambda \vdash n$.  Following \cite{RHansen10}, we can provide an estimate for $\DD{\s}$ and an upper bound for $\dd_{\s}$ for a general $\s \in \STab(\lambda)$ as follows.
For each $i\in [1,n]$, let $\s_i = \s{\downarrow_i}$, and let $\lambda^{i} = \Shape(\s_i)$.
Then $\upt{\s_i}{\lambda^{i+1}} = \s_{i+1}$ for all $i \in [1,n-1]$.
By iterating Corollary \ref{cor:same-relative-positions}(1), we get $f_{\s} = \DD{\upI{\lambda^{1}}{\lambda^{2}}} \dotsm \DD{\upI{\lambda^{n-1}}{\lambda^{n}}}\, e_{\IT^{\lambda}}$ .
Thus, $\DD{\upI{\lambda^{1}}{\lambda^{2}}} \dotsm \DD{\upI{\lambda^{n-1}}{\lambda^{n}}}$ equals $\DD{\s}$ modulo the annihilator of $e_{\IT^{\lambda}}$, so that the former is an estimate of the latter, and $\dd_{\s} \mid \prod_{i=1}^{n-1} \dd_{\upI{\lambda^{i}}{\lambda^{i+1}}}$.
Note that Corollary \ref{cor:remove-one-node} and Theorem \ref{thm:summary}(1) give closed formulae for $\DD{\upI{\lambda^{i}}{\lambda^{i+1}}}$ and $\dd_{\upI{\lambda^{i}}{\lambda^{i+1}}}$ respectively for each $i \in [1,n-1]$.
\end{rem}

\end{document}